\newtheorem{thm}{Theorem}[section]
\newtheorem{prop}[thm]{Proposition}
\newtheorem{lem}[thm]{Lemma}
\newtheorem{cor}[thm]{Corollary}
\numberwithin{equation}{section}
\theoremstyle{definition}
\newtheorem{definition}[thm]{Definition}
\newtheorem{remark}[thm]{Remark}
\newcommand{\im}{\operatorname{im}}
\newcommand{\Db}{{\rm D}^{\rm b}}
\newcommand{\Dp}{{\rm D}_{\rm perf}}
\newcommand{\D}{{\rm D}}
\newcommand{\Aut}{{\rm Aut}}
\newcommand{\rk}{{\rm rk}}
\newcommand{\coh}{{\cat{Coh}}}
\newcommand{\qcoh}{{\cat{QCoh}}}
\newcommand{\Hom}{{\rm Hom}}
\newcommand{\Spec}{{\rm Spec}}
\newcommand{\Spf}{{\rm Spf}}
\newcommand{\iso}{\cong}
\newcommand{\id}{{\rm id}}
\newcommand{\dual}{\raisebox{-.5ex}{\makebox[.45em]{{\LARGE$\check{}$}}}}
\newcommand{\mono}{\hookrightarrow}
\newcommand{\epi}{\twoheadrightarrow}
\newcommand{\mor}[1][]{\xrightarrow{#1}}
\newcommand{\cat}[1]{\begin{bf}#1\end{bf}}
\newcommand{\Ext}{{\rm Ext}}
\newcommand{\KE}{{\rm Ker}}
\newcommand{\coker}{{\rm Coker}}
\renewcommand{\im}{{\rm Im}}
\renewcommand{\ker}{{\rm Ker}}
\newcommand{\COKE}{{\rm Coker}}
\newcommand{\Mod}[1]{{\ko_{#1}\text{-}\cat{Mod}}}
\newcommand{\cal}{\mathcal}
\newcommand{\ka}{{\cal A}}
\newcommand{\kb}{{\cal B}}
\newcommand{\ke}{{\cal E}}
\newcommand{\kf}{{\cal F}}
\newcommand{\kg}{{\cal G}}
\newcommand{\kh}{{\cal H}}
\newcommand{\km}{{\cal M}}
\newcommand{\ko}{{\cal O}}
\newcommand{\kx}{{\cal X}}
\newcommand{\NN}{\mathbb{N}}
\newcommand{\ZZ}{\mathbb{Z}}
\newcommand{\RR}{\mathbb{R}}
\newcommand{\CC}{\mathbb{C}}
\newcommand{\ddual}{\dual\dual}
\renewcommand{\to}{\xymatrix@1@=15pt{\ar[r]&}}
\newcommand{\lto}{\xymatrix@1@=15pt{&\ar[l]}}
\renewcommand{\leftarrow}{\xymatrix@1@=15pt{&\ar[l]}}
\renewcommand{\rightarrow}{\xymatrix@1@=15pt{\ar[r]&}}
\renewcommand{\mapsto}{\xymatrix@1@=15pt{\ar@{|->}[r]&}}
\renewcommand{\epi}{\xymatrix@1@=19pt{\ar@{->>}[r]&}}
\renewcommand{\twoheadrightarrow}{\xymatrix@1@=19pt{\ar@{->>}[r]&}}
\renewcommand{\hookrightarrow}{\xymatrix@W=2pt@1@=15pt{\ar@{^(->}[r]&}}
\newcommand{\congpf}{\xymatrix@1@=15pt{\ar[r]^-\sim&}}
\renewcommand{\cong}{\simeq}
\begin{document}
\title{Formal deformations and their categorical general fibre}

\author[D.\ Huybrechts, E.\ Macr\`i, and P.\ Stellari]{Daniel Huybrechts, Emanuele Macr\`i, and Paolo Stellari}

\address{D.H.: Mathematisches Institut,
Universit{\"a}t Bonn, Beringstr.\ 1, 53115 Bonn, Germany}
\email{huybrech@math.uni-bonn.de}

\address{E.M.: Department of Mathematics, University of Utah, 155 South 1400 East, Salt
Lake City, UT 84112-0090, USA} \email{macri@math.utah.edu}

\address{P.S.: Dipartimento di Matematica ``F. Enriques'',
Universit{\`a} degli Studi di Milano, Via Cesare Saldini 50, 20133
Milano, Italy} \email{paolo.stellari@unimi.it}

\begin{abstract}
We study the general fibre of a formal deformation over the formal
disk of a projective variety from the view point of abelian and
derived categories. The abelian category of coherent sheaves of
the general fibre is constructed directly from the formal
deformation and is shown to be linear over the field of Laurent
series. The various candidates for the derived category of the
general fibre are compared.

If the variety is a surface with trivial canonical bundle, we show
that the derived category of the general fibre is again a linear
triangulated category with a Serre functor given by the square of
the shift functor. The paper is a companion to \cite{HMS1}, where
the results are applied to Fourier--Mukai equivalences of K3
surfaces.
\end{abstract}

\keywords{Derived categories, deformations, $K$-trivial surfaces}

\subjclass[2000]{18E30, 14D15}

\maketitle



\section{Introduction}

Let $\pi:\kx\to\Spf(\CC[[t]])$  be a formal deformation of a
smooth complex projective variety $X$ given by an inductive system
of flat morphisms $\pi_n:\kx_n\to\Spec(\CC[t]/(t^{n+1}))$ with
$\kx_0=X$ and isomorphisms
$\kx_{n+1}\times_{\CC[t]/(t^{n+2})}\Spec(\CC[t]/(t^{n+1}))\cong\kx_n$
over $\CC[t]/(t^{n+1})$. Thus, $\kx$  as a ringed space is the
topological space $X$  with $\ko_\kx:=\lim\ko_{\kx_n}$ as its
structure sheaf. Interesting examples arise as formal
neighborhoods of an actual deformation of $X$ over a smooth
one-dimensional base, which may be algebraic or just a complex
disk.

In order to understand the generic behavior of certain classes of
varieties, it is often necessary to study the general fibre of
formal deformations of the type  $\pi:\kx\to\Spf(\CC[[t]])$. If
the deformation is given as the formal neighbourhood of an
algebraic deformation of $X$ over a curve, then the usual concept
of the scheme-theoretical general fibre yields a variety defined
over the function field of the curve. For arbitrary formal
deformations, e.g.\ obtained as formal neighborhoods of
deformations in non-algebraic directions, a geometric construction
of the general fibre as a rigid analytic variety is provided by
the work of Raynaud \cite{Ray}.

The aim of this paper is to present a categorical approach to the
general fibre. We construct the abelian category of coherent
sheaves on the general fibre directly without first passing to the
rigid analytic variety representing it geometrically. This
simplifies going back and forth from sheaves on the original
variety $X$ or its formal deformation $\kx$  to sheaves on the
general fibre. The passage from the abelian category to its derived category,
which plays a central role in the applications we have in mind, is
more difficult. Here we have to address subtle points related to
Verdier quotients of triangulated categories.

\medskip

To appreciate the results of this paper, we should briefly explain
the main application we developed in
\cite{HMS1}. For a smooth projective K3 surface $X$, Orlov proved
in \cite{Or1} that any autoequivalence of the bounded derived category of coherent sheaves $\Db(X)$ induces an
isomorphism of the total cohomology group $H^*(X,\ZZ)$ preserving
a natural weight-$2$ Hodge structure and the lattice structure
induced by the cup-product. In particular, there exists a
homomorphism of groups $\rho$ between the group of
autoequivalences $\Aut(\Db(X))$ and some orthogonal group (denoted
by $\mathrm{O}(\widetilde H(X,\ZZ))$) of the total cohomology
group of $X$ (see \cite{HMS1,Or1}).

Despite this nice result, a description of the image of $\rho$ had
been missing for some time. In \cite{Sz}, Szendr\H{o}i proposed a conjecture saying that $\rho$ should
send an equivalence to an isometry in $\mathrm{O}(\widetilde H(X,\ZZ))$
with the additional property that the orientation of some
$4$-dimensional positive definite subspace of $H^*(X,\RR)$ is
preserved.

In \cite{HMS1}, we gave a positive answer to this
conjecture using a deformation argument whose main steps are the
following. Given a smooth projective K3 surface $X$, we study a
very special formal deformation of $X$ based on its hyperk\"ahler
geometry. At this point, we argue that the derived category of the general fibre of such a
deformation, despite being $\CC((t))$-linear and not $\CC$-linear,
has the same basic features as the derived category of a generic
complex analytic K3 surface (i.e.\ a K3 surface with trivial
Picard group). The same conjecture has been solved in \cite{HMS}
for those surfaces. Hence, by the special choice of the
deformation, we can conclude that it holds true for $X$
as well.

The main result of this paper (Theorem \ref{thm:main} below) establishes some of the fundamental properties of the derived category of the general fibre which are needed in the above strategy.

\medskip

To state precisely this result, we first define the abelian and the derived category of the general
fibre of a formal deformation $\pi:\kx\to\Spf(\CC[[t]])$ of $X$.
Let $\coh(\kx)_0\subset \coh(\kx)$ be the full abelian subcategory
of coherent sheaves on $\kx$ which are torsion over $\CC[[t]]$,
i.e.\ the full subcategory of all sheaves $E\in\coh(\kx)$
supported on some $\kx_n$, for $n\gg0$. With this definition,
$\coh(\kx)_0$ is a Serre subcategory and  the quotient category
$$\coh(\kx_K):=\coh(\kx)/\coh(\kx)_0$$  is called the
\emph{abelian category of coherent sheaves on the general fibre}.
Here, $K$ denotes the quotient field of $\CC[[t]]$, i.e.\ the
field of all Laurent series. One can indeed  show that
$\coh(\kx_K)$ is a $K$-linear abelian category.

Next we denote by $\Db(\kx):=\Db_{\rm coh}(\Mod{\kx})$ the bounded derived category of the abelian category of $\ko_\kx$-modules with coherent cohomology.
This category has a full triangulated subcategory
$$\Db_0(\kx)\subset\Db(\kx)$$
consisting of all complexes with cohomology in $\coh(\kx)_0.$ The
Verdier quotient
$$
\Db(\kx_K):=\Db(\kx)/\Db_0(\kx)
$$
is the \emph{derived category of the general fibre} of the formal
deformation $\pi:\kx\to\Spf(\CC[[t]])$.

The fundamental properties of the derived category of the general fibre for smooth projective surfaces with trivial canonical bundle are explained in the following theorem which is the main result of the paper.

\begin{thm}\label{thm:main}
    Let $\kx\to\Spf(\CC[[t]])$ be a formal deformation of a smooth projective surface $X$
    with trivial canonical bundle. The derived category of the general fibre $\Db(\kx_K)$ is a
    $K$-linear triangulated category and the square of the shift functor defines a Serre functor.
    Moreover, there exists an exact $K$-linear equivalence
    $\Db(\kx_K)\cong\Db(\coh(\kx_K))$.
\end{thm}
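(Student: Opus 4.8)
The plan is to prove the three assertions essentially in the order they are stated, with the bulk of the work going into the final equivalence $\Db(\kx_K)\cong\Db(\coh(\kx_K))$ and the verification of the Serre functor. The $K$-linearity of $\Db(\kx_K)$ is formal: the $\CC[[t]]$-linear structure on $\Db(\kx)$ makes multiplication by $t$ into an endofunctor, and on the Verdier quotient this becomes invertible because the cone of $t\colon E\to E$ lies in $\Db_0(\kx)$ for every $E$ (its cohomology sheaves are killed by a power of $t$, hence are torsion); therefore scalars extend from $\CC[[t]]$ to $K$. So the substance lies elsewhere.

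For the comparison with $\Db(\coh(\kx_K))$, I would first recall the general principle (Miyachi, or the localization theory for derived categories) that for a Serre subcategory $\kc_0\subset\kc$ one has a natural exact functor $\Db_{\kc_0}(\kc)\to\Db(\kc/\kc_0)$, and that this functor is an equivalence as soon as every short exact sequence $0\to A'\to A\to B\to 0$ in $\kc/\kc_0$ with $B$ in the image can be lifted, up to the subcategory, to one in $\kc$ — more precisely, one wants an ``effacement'' condition ensuring that the $\Ext$-groups computed in the two categories agree. In our situation $\kc=\coh(\kx)$, $\kc_0=\coh(\kx)_0$, and $\Db_{\kc_0}(\kc)=\Db(\kx)/\Db_0(\kx)=\Db(\kx_K)$ (using that a complex of $\ko_\kx$-modules with coherent cohomology is quasi-isomorphic to a bounded complex of coherent sheaves, and that $\Db_0(\kx)$ is exactly the complexes with cohomology in $\coh(\kx)_0$). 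So the task reduces to a statement purely about the abelian category $\coh(\kx)$: I would show that $\coh(\kx_K)$ has finite homological dimension and that for $E,F\in\coh(\kx)$ the groups $\Ext^i_{\coh(\kx_K)}(E,F)$ are computed by resolutions living in $\coh(\kx)$. The key inputs are that $\kx$ is regular (being flat over $\CC[[t]]$ with smooth special fibre $X$, a surface, so $\dim\kx=3$ and $\ko_\kx$ has finite global dimension) and that $t$ acts invertibly after localization, so that the torsion subcategory does not obstruct lifting extensions — concretely, any morphism in $\coh(\kx_K)$ is represented by an honest morphism of sheaves on $\kx$ after multiplying by a power of $t$, and similarly for the higher $\Ext$'s.

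For the Serre functor, the strategy is to transport Serre duality from $\kx$ to the general fibre. Since $X$ is a surface with $\omega_X\cong\ko_X$, the relative dualizing complex of $\kx\to\Spf(\CC[[t]])$ is trivial, so Serre duality on $\Db(\kx)$ (relative to $\CC[[t]]$) takes the form $\Hom_{\Db(\kx)}(E,F)^{\vee}\cong\Hom_{\Db(\kx)}(F,E[2])$ for $E,F$ with, say, proper support over $\CC[[t]]$ — but here all objects have this property automatically since $X$ is proper. After inverting $t$, i.e.\ passing to the Verdier quotient and tensoring $\Hom$-groups with $K$, this duality descends to a $K$-bilinear perfect pairing $\Hom_{\Db(\kx_K)}(E,F)\times\Hom_{\Db(\kx_K)}(F,E[2])\to K$, which is exactly the statement that $[2]$ is a Serre functor on $\Db(\kx_K)$. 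The two points needing care are that the $\Hom$-spaces in $\Db(\kx_K)$ are finite-dimensional over $K$ (so that ``$\vee$'' makes sense) — which follows from the finiteness of coherent cohomology on $\kx$ together with the fact that $\Hom_{\Db(\kx_K)}(E,F)=\varinjlim_n\Hom_{\Db(\kx)}(E,F)[t^{-1}]$ is a finitely generated $K$-module — and the compatibility of the duality pairing with localization, which one checks by noting that the trace map $H^2(\kx,\omega)\to\CC[[t]]$ becomes, after inverting $t$, the trace for the general fibre.

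**The main obstacle** I expect is the equivalence $\Db(\kx_K)\cong\Db(\coh(\kx_K))$: Verdier quotients of derived categories are generally \emph{not} derived categories of the quotient abelian category, and one must genuinely exploit the specific geometry — the regularity of $\kx$ and the invertibility of $t$ — to control the $\Ext$-groups of $\coh(\kx_K)$ and to lift resolutions. In particular, showing that $\coh(\kx_K)$ has enough of the homological finiteness needed (finite global dimension, and that a bounded complex over it lifts to a bounded complex of coherent sheaves on $\kx$ up to the torsion subcategory) is where the real work is, and is presumably why the hypothesis that $X$ be a \emph{surface} (keeping $\dim\kx$ small and the homological dimension under control) enters.
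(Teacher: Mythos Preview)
Your outline for $K$-linearity and the Serre functor is essentially the paper's approach, and your idea of comparing $\Db(\coh(\kx))/\Db_0(\coh(\kx))$ with $\Db(\coh(\kx_K))$ via an effacement/Miyachi-type argument is also what the paper does (Proposition~\ref{pro:equivalence}). The genuine gap is elsewhere.

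You write that $\Db(\kx)/\Db_0(\kx)$ equals the quotient of $\Db(\coh(\kx))$ by its torsion subcategory, ``using that a complex of $\ko_\kx$-modules with coherent cohomology is quasi-isomorphic to a bounded complex of coherent sheaves.'' For a noetherian \emph{scheme} this is standard, but for a formal scheme it is \emph{not} known: the paper explicitly warns that the natural functor $\Db(\coh(\kx))\to\Db(\kx)=\Db_{\rm coh}(\Mod{\kx})$ is in general not an equivalence. So you cannot simply identify $\Db(\kx_K)$ with $\Db(\kx_K^c):=\Db(\coh(\kx))/\Db_0(\coh(\kx))$ and then quote Miyachi. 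Bridging this gap is precisely the content of Proposition~\ref{prop:derallsame}, and it is here---not in any ``global dimension'' bound---that the hypotheses \emph{surface} and \emph{trivial canonical bundle} are actually used.

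The paper's mechanism is different from what you sketch. One compares the two $\delta$-functors $\Ext^*_{\Db(\coh(\kx))}(-,F)\otimes_R K$ and $\Ext^*_{\Db(\kx)}(-,F)\otimes_R K$ on $\coh(\kx)$; they agree in degrees $0$ and $1$ (since $\coh(\kx_K)$ is the heart of a bounded $t$-structure on $\Db(\kx_K)$), and by the already-established Serre functor $[2]$ on $\Db(\kx_K)$ the second functor vanishes in degrees $>2$. So one only has to show $\Ext^2_{\Db(\kx)}(-,F)\otimes_R K$ is coeffaceable. This is done by a geometric trick (Lemma~\ref{lem:accessory7}): for any rational section $s$ of $\kx/R$ and $n\gg0$ one has $\Ext^2_{\Db(\kx)}(\km_s^nE,F)\otimes_R K=0$, proved via Serre duality and Krull's intersection theorem; taking two disjoint sections gives the required epimorphism. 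None of this is visible in your proposal, and the trivial canonical bundle is indispensable because the argument uses the Serre functor \emph{before} the equivalence is established.
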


The latter property, which is proved in Proposition \ref{prop:derallsame}, is extensively used in \cite{HMS1}. Both
interpretations of the derived category of the general fibre, as the Verdier quotient of triangulated categories and
as the bounded derived category of the abelian category of coherent sheaves on the general fibre, are used.
E.g.\ in \cite{HMS} all autoequivalences (and in particular spherical twists)
of  $\Db(\coh(X))$ are described for a generic (non-projective) K3 surface $X$. The
arguments apply as well to $\Db(\coh(\kx_K))$ for $\kx_K$ the general fibre of a very general formal
deformation of a projective K3 surface $X$. More precisely, we show that, up to shift, $\Db(\coh(\kx_K))$ contains
just one spherical object (namely the image of the structure sheaf $\ko_{\kx_K}$). On the other hand, in order to deform
a given autoequivalence of $\Db(X)$ to an autoequivalence of the derived category of the general fibre $\kx_K$ we need
to work with $\Db(\kx)$ and its quotient $\Db(\kx_K)$.
In the end we prove that the deformation of the autoequivalence of $\Db(X)$ to a
Fourier--Mukai equivalence $\Phi:\Db(\kx_K)\to\Db(\kx'_K)$ has kernel in the abelian category $\coh((\kx\times_R\kx')_K)$.

In the process of proving Theorem \ref{thm:main}, we will be considering a number of related technical results. To make the overview of the paper more complete, let us mention a few of them which will be particularly relevant in \cite{HMS1}:
\smallskip

(A) $\;$ The spaces of morphisms in quotient categories are often
difficult to describe. However, for the two natural quotients
$\coh(\kx)\to\coh(\kx_K)$ and $\Db(\kx)\to\Db(\kx_K)$ they are
simply given by the tensor pro\-duct with the quotient field $K$,
which makes both categories $K$-linear (Propositions
\ref{prop:Homcohgen} and \ref{pro:homgamma}).

(B) $\;$ One advantage of the categorical approach to the general
fibre is that the Fourier--Mukai machinery carries over easily.
For example, we prove that if the Fourier--Mukai kernel $\ke_0$ of an
equi\-valence $\Db(X)\congpf\Db(X')$ deforms to a complex $\ke$ on
the product of two formal deformations
$\kx,\kx'\to\Spf(\CC[[t]])$, then its restriction $\ke_K$ to the
general fibre defines again an equivalence
$\Db(\kx_K)\congpf\Db(\kx'_K)$ (Corollary
\ref{prop:genericalsoequiv}).

\medskip

We have not attempted to develop the theory in its most general
form. It would certainly be natural to study the general fibre of
formal deformations over more general formal rings from a
categorical perspective. Unfortunately, in that case, the results (e.g.\ the description of
the space of morphisms) would not nearly be as nice as in the
simple situation of deformations over $\CC[[t]]$. But even the
one-dimensional formal deformations studied here, should be useful
in other situations, although our discussion is tailored to the
application to Fourier--Mukai equivalences between K3 surfaces in
\cite{HMS1}.

\medskip

The plan of the paper is as follows. In Section \ref{sect:Coh} we define the abelian and derived categories of the general fibre of a formal deformation. We study their Hom-spaces and, in Sections \ref{sect:derfunc} and \ref{sect:FM}, we analyze the behavior of Fourier--Mukai transforms and Fourier--Mukai equivalences when passing to the derived categories of the general fibres.

In Section \ref{sect:Der} we complete the proof of Theorem \ref{thm:main}. As a first step, we compare the Hom-spaces and the Euler pairing on the general and special fibres of a formal deformation (Section \ref{subsect:Homs}). In Section \ref{subsec:Serre} we describe the Serre functor of the general fibre. Finally, in Section \ref{subsec:genfib}, we restrict to the case of smooth projective surfaces with trivial canonical bundle and prove the main theorem.

\bigskip

\noindent{\bf Notation.} Denote by $R:=\CC[[t]]$ the ring of power
series in $t$ which is a complete discrete valuation ring. Its
spectrum $\Spec(R)$ consists of two points: The closed point
$0:=(t)\in\Spec(R)$ with local ring $R$ and residue field $\CC$
and the generic point $(0)\in\Spec(R)$ with residue field
$K:=\CC((t))$, the field of Laurent series. Moreover, we put
$R_n:=\CC[t]/(t^{n+1})$ with the natural surjection
$R\twoheadrightarrow R_n$ defining a closed embedding
$\Spec(R_n)\subset\Spec(R)$, which is the $n$-th infinitesimal
neighbourhood of $0\in\Spec(R)$. The formal scheme $\Spf(R)$ is
then described by the increasing sequence of closed subschemes
$0=\Spec(R_0)\subset \Spec(R_1)\subset\ldots\subset
\Spec(R_n)\subset\ldots$. Throughout we will use the following
notations for the natural inclusions ($m<n$):
\begin{eqnarray*}&\iota_n : \kx_n \hookrightarrow
\kx~~~~{\rm and}~~~\iota:=\iota_0:X\hookrightarrow \kx;\\
&i_{m,n}: \kx_m \hookrightarrow \kx_n,~ i_n := i_{n,
n+1}:\kx_n\hookrightarrow\kx_{n+1}, ~~~{\rm
and}~~~j_n=i_{0,n}:X\hookrightarrow \kx_n.
\end{eqnarray*}


\section{The derived category of the general fibre}
\label{sect:Coh}

In this section we study the basic properties of the abelian
category $\coh(\kx_K)$ of coherent sheaves on the general fibre
and of the triangulated category $\Db(\kx_K)$. At the end of the
section we  also discuss the extension of the definition and of
some interesting basic properties of Fourier--Mukai functors in
the setting of formal deformations and of the derived categories
of their general fibres (Corollary \ref{prop:genericalsoequiv}, see also (B) in the introduction).

The reader not familiar with the notion of quotients of abelian categories by Serre subcategories or with that of Verdier quotiens is strongly encouraged to read the Appendix before proceeding with this section. For the convenience of the reader we list now the main abelian and triangulated categories which will be introduced in course of the paper. We also indicate the precise section where they are defined.

\begin{itemize}
	\item $\Mod\kx$: the abelian category of $\ko_\kx$-modules (Section \ref{sect:abgen});
	\item $\coh(\kx)$: the abelian category of coherent sheaves on $\kx$ (Section \ref{sect:abgen});
	\item $\coh(\kx)_0$: the Serre subcategory of $\coh(\kx)$ consisting of sheaves supported on some $\kx_n$ (Section \ref{sect:abgen});
	\item $\coh(\kx)_\mathrm{f}$: the full additive category $\coh(\kx)$ consisting of $\CC[[t]]$-flat sheaves (Section \ref{sect:abgen});
	\item $\coh(\kx_K)$: the quotient of the category $\coh(\kx)$ by $\coh(\kx)_0$ (Section \ref{sect:abgen});
\end{itemize}

\smallskip

\begin{itemize}
	\item $\Db(\Mod\kx)$: the bounded derived category of the abelian category $\Mod\kx$ (Section \ref{sect:dergen});
	\item $\Db(\coh(\kx))$: the bounded derived category of the abelian category $\coh(\kx)$ (Section \ref{sect:dergen})
	\item $\Db(\kx)=\Db_\mathrm{coh}(\Mod\kx)$: the full triangulated subcategory of $\Db(\Mod\kx)$ consisting of complexes with coherent cohomology (Section \ref{sect:dergen});
	\item $\Db_0(\kx)=\Db_{\coh(\kx)_0}(\Mod\kx)$: the full thick triangulated subcategory of $\Db(\kx)$ consisting of complexes with cohomology in $\coh(\kx)_0$ (Section \ref{sect:dergen});
	\item $\Db(\kx_K)$: the Verdier quotient $\Db(\kx)/\Db_0(\kx)$ (Section \ref{sect:dergen});
	\item $\Db_0(\coh(\kx))$: the full thick triangulated subcategory of $\Db(\coh(\kx))$ consisting of complexes with cohomology in $\coh(\kx)_0$ (Section \ref{sect:dergen});
	\item $\Dp(\kx_n)$: the full triangulated subcategory of perfect complexes on $\kx_n$ (Section \ref{sect:dergen});
	\item $\Db(\kx_K^c)$: the Verdier quotient $\Db(\coh(\kx))/\Db_0(\coh(\kx))$ (Section \ref{sect:dergen}).
\end{itemize}

\subsection{The abelian category of the general
fibre}\label{sect:abgen}

Given a formal deformation $\pi:\kx\to\Spf(R)$ of a smooth
projective variety $X$, the abelian category of all
$\ko_\kx$-sheaves will be denoted $\Mod{\kx}$. Any $\ko_\kx$-sheaf
$E$ yields an inverse system of $\ko_{\kx_n}$-sheaves
$E_n:=\iota^*_n E$ with $\ko_{\kx_n}$-linear transition maps
$E_n\to i_{m,n*}E_m$, for $n>m$, inducing isomorphisms
$i_{m,n}^*E_n\cong E_m$. Then $\lim E_n$ is again an
$\ko_\kx$-sheaf, but the natural homomorphism $E\to\lim E_n$ is in
general not an isomorphism. However, if we restrict to coherent
$\ko_X$-modules $E$, then indeed $E\cong\lim E_n$. This proves
that a coherent $\ko_\kx$-module is the same as an inverse system
of coherent $\ko_{\kx_n}$-sheaves $E_n$ together with transition
maps $E_n\to i_{m,n*}E_m$ inducing isomorphisms $i_{m,n}^*E_n\cong
E_m$ (see \cite[II.9]{HartAG} or \cite{IllFGA}).

By $\coh(\kx)\subset\Mod{\kx}$ we denote the full abelian
subcategory of all coherent sheaves on $\kx$ and we tacitly use
the equivalence of $\coh(\kx)$ with the abelian category of
coherent inverse systems as just explained. The restriction to
$\kx_n$ will be written as
$$\xymatrix{\coh(\kx)\ar[r]&\coh(\kx_n),}~~\xymatrix{E\ar@{|->}[r]&E_n.}$$
So in particular, $E_0\in\coh(X)$ will denote the restriction  of
a sheaf $E\in\coh(\kx)$ or $E_n\in\coh(\kx_n)$ to the special
fibre $X=\kx_0$. As we assume our formal scheme to be smooth, any
coherent sheaf on $\kx$ admits locally a finite free resolution.
However, since $\kx$ is not necessarily projective, locally free
resolutions might not exist globally.

\medskip

The category $\coh(\kx)$ of coherent sheaves on the formal
$R$-scheme $\kx$ is in a natural way an $R$-linear category. A
coherent sheaf $E\in\coh(\kx)$ has support on $\kx_n$ if
$t^{n+1}E=0$ and, as in the introduction, the subcategory
consisting of all sheaves having support on some $\kx_n$ is
denoted by $\coh(\kx)_0$.

A coherent sheaf $E\in\coh(\kx)$ is \emph{$R$-flat} if
multiplication with $t$ yields an injective homomorphism $t:E\to
E$. By $\coh(\kx)_{\rm f}\subset\coh(\kx)$ we denote the full
additive subcategory of $R$-flat sheaves. This subcategory is
clearly not abelian, but the two subcategories
$$\coh(\kx)_0,\coh(\kx)_{\rm f}\subset\coh(\kx)$$ define a torsion
theory for the abelian category $\coh(\kx)$. More precisely, there
are no non-trivial homomorphisms from objects in $\coh(\kx)_0$ to
objects in $\coh(\kx)_{\rm f}$ and every $E\in\coh(\kx)$ is in a
unique way an extension
$$\xymatrix{0\ar[r]& E_{\rm tor}\ar[r]& E\ar[r]& E_{\rm
f}\ar[r]& 0}$$ with $E_{\rm tor}\in\coh(\kx)_0$ and $E_{\rm
f}\in\coh(\kx)_{\rm f}$. Indeed, set $E_{\rm
tor}:=\bigcup\ker(t^n:E\to E)$, i.e.\ the $R$-torsion subsheaf of
$E$. The union must stabilize, as $E$ is coherent,  and $E_{\rm
f}:=E/E_{\rm tor}$ is $R$-flat. (Note that in general  this
torsion theory is not cotilting, i.e.\ not every $R$-torsion sheaf
is a quotient of an $R$-flat one.)

Let us now define the \emph{abelian category of coherent
sheaves on the general fibre} $$\coh(\kx_K):=\coh(\kx)/\coh(\kx)_0.$$

\begin{remark}
Since we divide out by a small subcategory, the quotient is a
category with Homsets. The same remark applies to all later
quotient constructions and we will henceforth  ignore the issue.
\end{remark}

The image of a sheaf $E\in\coh(\kx)$ under the natural projection
from $\coh(\kx)$ onto $\coh(\kx_K)$ is denoted $E_K$.

For two coherent sheaves $E,E'\in\coh(\kx)$ we
shall write $\Hom(E,E')$ for the group of homomorphisms in
$\coh(\kx)$ and $\Hom_K(E_K,{E'}_{\!\!\! K})$ for the group of
homomorphisms of their images $E_K,{E'}_{\!\!\! K}$ in
$\coh(\kx_K)$. The natural homomorphisms induced by the projection
will be denoted
$$\eta:\Hom(E,E')\to\Hom_K(E_K,{E'}_{\!\!\! K}).$$ By construction
of the quotient, any morphism $E_K\to {E'}_{\!\!\! K}$ in
$\coh(\kx_K)$ is an equivalence class of diagrams $(\xymatrix{E&
\ar[l]_-{s_0} E_0\ar[r]^-{g}& E'})$ with
$\ker(s_0),\COKE(s_0)\in\coh(\kx)_0$. The composition
$$(\xymatrix@C=12pt{E& \ar[l] E_0\ar[r]& E'})\circ
(\xymatrix@C=12pt{E'& \ar[l] E'_0\ar[r]& E''})$$ of two morphisms
$E_K\to {E'}_{\!\!\! K}$ and ${E'}_{\!\!\! K}\to{E''}_{\!\!\!\! K}
$ is naturally defined by means of the fibre product
$(\xymatrix@C=14pt{E& \ar[l]E_0\times_{E'}E'_0\ar[r]& E''})$.

Also note that $\coh(\kx)_{\rm f}\to\coh(\kx_K)$ is essentially
surjective, i.e.\ every object $F\in\coh(\kx_K)$ can be lifted to
an $R$-flat sheaf on $\kx$. Indeed, if $F=E_K$, then $(E_{\rm
f})_K\cong E_K=F$ and, therefore, $E_{\rm f}$ is an $R$-flat lift
of $F$.

\begin{remark}
As mentioned in the introduction, to the formal $R$-scheme $\kx$
one can associate the general fibre $\kx_K$ which is a rigid
analytic space (see \cite{Berthelot,Ray,RZ}). The abelian category
$\coh(\kx_K)$ is in fact equivalent to the category of coherent
sheaves on $\kx_K$, which explains the notation.
\end{remark}

\begin{prop}\label{prop:Homcohgen}
The abelian category $\coh(\kx_K)$ is $K$-linear and for all
$F,G\in\coh(\kx)$ the natural projection $\coh(\kx)\to\coh(\kx_K)$
induces a $K$-linear isomorphism
$$\Hom(F,G)\otimes_RK\congpf\Hom_{K}(F_K,G_K).$$
\end{prop}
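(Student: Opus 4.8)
The plan is to prove both assertions in parallel, using throughout that, by construction, $\coh(\kx_K)$ is the localisation of $\coh(\kx)$ at the class $S$ of morphisms whose kernel and cokernel lie in $\coh(\kx)_0$, and that $S$ admits a calculus of fractions; thus, as recalled just before the statement, every morphism $F_K\to G_K$ is represented by a roof $F\xleftarrow{s}H\xrightarrow{g}G$ with $s\in S$. To see $K$-linearity, I would note that for every $E\in\coh(\kx)$ and every $n\ge 0$ the multiplication $t^n\colon E\to E$ has kernel and cokernel killed by $t^n$, hence in $\coh(\kx)_0$, so that $t^n\in S$ and $t^n$ becomes invertible in $\coh(\kx_K)$. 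Since the localisation functor carries natural endomorphisms of $\id_{\coh(\kx)}$ to natural endomorphisms of $\id_{\coh(\kx_K)}$, the ring map $R\to Z(\coh(\kx))$ descends to $R\to Z(\coh(\kx_K))$, and as $t$ now acts invertibly it factors through $R[t^{-1}]=K$. Hence $\coh(\kx_K)$ is $K$-linear, and the natural $R$-linear map $\eta\colon\Hom(F,G)\to\Hom_K(F_K,G_K)$, whose target is now $K$-linear, extends canonically to a $K$-linear map $\eta\otimes_RK$, which is what must be shown to be bijective.

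The first real step is to reduce to the case in which $F$ and $G$ are $R$-flat. Using the torsion sequence $0\to G_{\rm tor}\to G\to G_{\rm f}\to0$, the identification $\Hom(F_{\rm f},G_{\rm f})=\Hom(F,G_{\rm f})$ (a morphism into a torsion-free sheaf kills $F_{\rm tor}$), and the associated long exact sequence, the kernel of the natural map $\Hom(F,G)\to\Hom(F_{\rm f},G_{\rm f})$ is a quotient of $\Hom(F,G_{\rm tor})$ and its cokernel embeds into $\Ext^1(F,G_{\rm tor})$; since $G$ is coherent, $G_{\rm tor}$ is killed by a single power $t^M$, so both are killed by $t^M$ and the map becomes an isomorphism after $\otimes_RK$ ($K$ being flat over $R$). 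As moreover $F_K\cong(F_{\rm f})_K$ and $G_K\cong(G_{\rm f})_K$ canonically and compatibly with $\eta$, we may from now on assume $F,G\in\coh(\kx)_{\rm f}$.

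Now let $F,G\in\coh(\kx)_{\rm f}$. For surjectivity of $\eta\otimes_RK$, take a roof $F\xleftarrow{s}H\xrightarrow{g}G$ representing a morphism $F_K\to G_K$. Replacing $H$ by $H_{\rm f}$ (the projection $H\to H_{\rm f}$ lies in $S$, and both $s$ and $g$ kill $H_{\rm tor}$ because $F$ and $G$ are flat) we may take $H$ flat; then $\ker(s)\in\coh(\kx)_0$ has no torsion, hence is $0$, so $s$ realises $H$ as a subsheaf of $F$ with $t^NF\subseteq H$ for some $N$ (as $\coker(s)$ is killed by $t^N$). Precomposing the roof with the inclusion $t^NF\subseteq H$, which again lies in $S$, and then with the isomorphism $t^N\colon F\isomor t^NF$ (here the flatness of $F$ is used) makes it equivalent to one of the shape $F\xleftarrow{t^N}F\xrightarrow{g'}G$ with $g'\in\Hom(F,G)$, and this roof equals $t^{-N}\cdot\eta(g')$; so $\eta\otimes_RK$ is surjective. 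For injectivity, the calculus of fractions shows that $\eta(\phi)=0$, for $\phi\in\Hom(F,G)$, if and only if $\phi\circ s=0$ for some $s\in S$ with target $F$; since then $\im(s)\supseteq t^NF$ for some $N$, this forces $\phi|_{t^NF}=0$, i.e.\ $t^N\phi=0$ in $\Hom(F,G)$, and conversely $t^N\phi=0$ gives $\phi|_{t^NF}=0$ with $t^NF\subseteq F$ in $S$, whence $\eta(\phi)=0$. Therefore $\ker(\eta)$ is exactly the $R$-torsion submodule of $\Hom(F,G)$, and tensoring the resulting injection $\Hom(F,G)/(\text{torsion})\hookrightarrow\Hom_K(F_K,G_K)$ with the flat $R$-algebra $K$ — using $\Hom(F,G)\otimes_RK=(\Hom(F,G)/(\text{torsion}))\otimes_RK$ — yields injectivity of $\eta\otimes_RK$. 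Together with surjectivity this completes the argument.

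The step I expect to be the main obstacle is the normalisation in the surjectivity argument: bringing an arbitrary roof in the localised category to the canonical form $t^{-N}\eta(g')$ requires combining the calculus of fractions, the replacement of $H$ by $H_{\rm f}$, and the flatness of $F$ in the right order, and one must additionally check that the bijection obtained this way is honestly $K$-linear and coincides with $\eta\otimes_RK$. A secondary, more clerical point is to keep every reduction compatible with $\eta$ and to invoke coherence precisely where it is needed, namely to bound the $t$-torsion of the sheaves that occur by a single power of $t$.
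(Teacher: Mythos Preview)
Your proof is correct but organised differently from the paper's. The paper does not reduce to the $R$-flat case at all: it works directly with arbitrary $F,G\in\coh(\kx)$ and handles a general roof $F\xleftarrow{s_0}F_0\xrightarrow{g}G$ in two explicit steps, first replacing $F_0$ by $\im(s_0)$ (at the cost of multiplying by $t^n$ to kill $\ker(s_0)$) and then lifting from $\im(s_0)\subseteq F$ to $F$ (at the cost of another $t^m$ to kill $\coker(s_0)$); injectivity is treated by the same factor-through-$\coker(s)$ trick you use. Your route trades these two steps for a preliminary reduction to flat $F,G$ via a Yoneda $\Ext^1$ argument, after which every roof can be normalised so that $s$ is injective and the surjectivity argument becomes a single clean step. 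What your approach buys is a tidier endgame and a clearer identification of $\ker\eta$ with the $R$-torsion of $\Hom(F,G)$; what the paper's approach buys is that it stays entirely within $\Hom$-level manipulations, avoiding any appeal to $\Ext^1$ (and hence any implicit use of Yoneda extensions in $\coh(\kx)$). Both are equally valid; the only point to be slightly careful about in yours is that the long exact sequence you invoke is the Yoneda one, since $\coh(\kx)$ is not known here to have enough injectives.
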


\begin{proof}
As a quotient of the $R$-linear category $\coh(\kx)$, the category
$\coh(\kx_K)$ is also $R$-linear. The multiplication with $t^{-1}$
is defined as follows. Let $f \in \Hom_{K} (F_K, G_K)$ be a
morphism  re\-pre\-sented by $f:(\xymatrix{F& \ar[l]_{~s_0}
F_0\ar[r]^-{g}& G})$ with $\KE (s_0), \COKE (s_0) \in \coh
(\kx)_0$. Then set $t^{-1}f:(\xymatrix{F& \ar[l]_{~ts_0}
F_0\ar[r]^-{g}& G})$, which
 is a well-defined morphism in $\coh (\kx_K)$. This is because
the objects $\KE(ts_0)$ and $\COKE(ts_0)$ are in $\coh(\kx)_0$.
Moreover, one has $t (t^{-1} f) = f$ due to the following
commutative diagram
\begin{equation*}
\xymatrix@R=6pt@C=40pt{& F_0 \ar[dl]_{t s_0} \ar[dr]^{t g} \ar[dd]^{t \cdot \id} &\\
                 F && G.\\
                 & F_0 \ar[ul]^{s_0} \ar[ur]_{g} &
                }
\end{equation*}
The $K$-linearity of the composition is obvious.

Consider now the induced $K$-linear map
$$\xymatrix{\eta_K:\Hom (F, G) \otimes_R K \ar[r]&
 \Hom_{K} (F_K, G_K).}$$

To prove the injectivity of $\eta_K$, let $f \in \Hom (F, G)$ with
$\eta(f)=\eta_K(f) =0$. Then there exists a commutative diagram
\begin{equation*}
\xymatrix@R=10pt{& F' \ar[dl]_{s} \ar[dr]^{0} &\\
                 F \ar[rr]_{f} && G,
                }
\end{equation*}
with $\KE (s), \COKE (s) \in \coh (\kx)_0$ and hence $f$
factorizes through
$$\xymatrix{f:F \ar[r]^-{q}& \COKE (s) \ar[r]^-{f'}& G.}$$
Thus, if $t^n \COKE (s) =0$ for some $n>0$, then this yields $t^n
f  = f' \circ (t^n q) = 0$. In particular, $f\otimes 1 \in
\Hom(F,G)\otimes K$ is trivial.

In order to prove the surjectivity of $\eta_K$, we have to show
that for any $f \in \Hom_{K} (F_K, G_K)$ there exists an integer
$k$, such that $t^kf$ is induced by  a morphism $F\to G$ in
$\coh(\kx)$. Write $f:(\xymatrix{F&\ar[l]_{~s_0} F_0 \ar[r]^-{g}&
G})$ with $t^n \KE (s_0) = t^m \COKE (s_0) = 0$ for some positive
integers $m,n$. Consider the exact sequence
$$\xymatrix{0\ar[r]&\Hom (F', G) \ar[r]^-{\circ p}& \Hom (F_0, G) \ar[r]^-{\circ i}&
\Hom (\KE (s_0), G)}$$ induced by the natural projection
$p:F_0\twoheadrightarrow F':=\im(s_0)$ and its kernel
$i:\ker(s_0)\hookrightarrow F_0$. Since $ (t^n g) \circ i = g
\circ (t^n i) = 0$, there exists a (unique) homomorphism $g' : F'
\to G$ such that $g' \circ p = t^n g$. This yields the commutative
diagram
\begin{equation*}
\xymatrix@R=6pt@C=40pt{& F_0 \ar[dl]_{s_0} \ar[dr]^{t^n g} \ar[dd]^{p} &\\
                 F && G,\\
                 & F' \ar@{_{(}->}[ul] \ar[ur]_{g'} &
                }
\end{equation*}
which allows one to represent $t^nf$ by $(\xymatrix{F&
\ar@{_{(}->}[l] F'\ar[r]^-{g'}& G})$.

As $F/F'\cong\COKE(s_0)$ is annihilated by $t^m$, the homomorphism
$t^mg':F'\to G$ lifts to a homomorphism $g'':F\to G$, i.e.\
$g''|_{F'}=t^mg'$. This yields the commutative diagram
\begin{equation*}
\xymatrix@R=6pt@C=40pt@W=20pt{& ~F' \ar@{_{(}->}[dl] \ar[dr]^{t^m g'} \ar@{^{(}->}[dd] &\\
                 F && G.\\
                 & F \ar[ul]^{\id} \ar[ur]_{g''} &
                }
\end{equation*}
Hence $t^{m+n}f$ is represented by $(\xymatrix{F& \ar[l]_{~\id}
F\ar[r]^-{g''}& G})$, i.e.\ $t^{m+n}f=\eta(g'')$.
\end{proof}

\subsection{The derived category of the general fibre}\label{sect:dergen}
Let $\pi:\kx\to \Spf(R)$ be a formal deformation of $X$ and
consider the bounded derived category of $\kx$ defined as $$\Db
(\kx):=\Db_{\rm coh}(\Mod{\kx}),$$ which by definition is an
$R$-linear triangulated category.

\begin{remark}\label{rem:notperfect}
We will always tacitly use the well-known (at least for schemes)
fact that any bounded complex with coherent cohomology on a smooth
formal scheme is \emph{perfect}, i.e.\ locally isomorphic to a
finite complex of locally free sheaves of finite type (see e.g.\
\cite[Cor.\ 5.9]{Ill}). In other words $\Dp(\kx)\cong\Db(\kx)$.
This is however not true for $\kx_n$, $n>0$. Indeed, e.g.\ for
$n=1$ one has ${\rm Tor}_{R_1}^i(R_0,R_0)\cong R_0$ for all
$i\geq0$. So, the $R_1$-module $R_0$ does not admit a finite free
resolution. So we will have to work with $${\rm D}_{\rm
perf}(\kx_n)\subset\Db(\kx_n),$$ the full triangulated subcategory
of perfect complexes on $\kx_n$.
\end{remark}

Recall that for the noetherian scheme $\kx_n$ the functor
$$\Db(\coh(\kx_n))\congpf\Db(\kx_n):=\Db_{\rm coh}(\Mod{\kx_n})$$ is
an equivalence. Contrary to the case of a noetherian scheme, the
natural functor
\begin{equation}\label{eqn:natfunct}
\xymatrix{\Db(\coh(\kx))\ar[r]&\Db(\kx)=\Db_{\rm coh}(\Mod\kx)}
\end{equation}
is in general not an equivalence. However, \eqref{eqn:natfunct}
induces an equivalence between the full subcategories of
$R$-torsion complexes. To be more precise, let
$$\Db_0(\kx):=\Db_{\coh(\kx)_0}(\Mod\kx)\subset \Db(\kx)~~~{\rm
~~and~~}~~~\Db_0(\coh(\kx))\subset\Db(\coh(\kx))$$ be the full
triangulated subcategories of complexes with cohomology contained
in $\coh(\kx)_0$. Then one has:

\begin{prop}\label{prop_dertorsion}{\rm  i)} The natural functor
$\Db(\coh(\kx)_0)\to\Db(\coh(\kx))$ induces an equivalence
$$\xymatrix{\Db(\coh(\kx)_0)\ar[r]^-\sim&\Db_0(\coh(\kx)).}$$
{\rm ii)} The natural functor $\Db(\coh(\kx)_0)\to\Db(\kx)$
induces an equivalence
$$\xymatrix{\Db(\coh(\kx)_0)\ar[r]^-\sim&\Db_0(\kx).}$$
\end{prop}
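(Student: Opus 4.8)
The plan is to prove both statements by the same mechanism: show that the relevant triangulated categories can be realized as derived categories of the abelian category $\coh(\kx)_0$ itself, and then invoke a standard criterion (in the spirit of the Appendix) that a bounded derived category of an abelian category maps fully faithfully into any ambient bounded derived category in which that abelian category sits as a full subcategory closed under extensions, provided $\Ext$-groups are computed the same way. Concretely, for part (i), the natural functor $\Db(\coh(\kx)_0)\to\Db(\coh(\kx))$ is the derived functor of the exact inclusion $\coh(\kx)_0\hookrightarrow\coh(\kx)$, and its essential image is obviously contained in $\Db_0(\coh(\kx))$. Conversely every object of $\Db_0(\coh(\kx))$ has all cohomology sheaves in $\coh(\kx)_0$, so by the usual d\'evissage (truncation triangles) it lies in the triangulated subcategory generated by $\coh(\kx)_0$, giving essential surjectivity onto $\Db_0(\coh(\kx))$. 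For full faithfulness one reduces, again by truncation, to checking that for $A,B\in\coh(\kx)_0$ the map $\Ext^i_{\coh(\kx)_0}(A,B)\to\Ext^i_{\coh(\kx)}(A,B)$ is an isomorphism for all $i\geq0$.

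The key point is therefore this comparison of $\Ext$-groups, and here I would use the concrete description of $\coh(\kx)_0$: a sheaf supported on some $\kx_n$ is an $\ko_{\kx_n}$-module, so $\coh(\kx)_0=\bigcup_n\coh(\kx_n)$, a filtered union of abelian categories along the exact fully faithful pushforwards $i_{n*}$. Given $A,B\in\coh(\kx)_0$, both are supported on $\kx_N$ for $N$ large, and I would show that an extension of $A$ by $B$ in $\coh(\kx)$, a priori a sheaf on $\kx$, is automatically killed by $t^{N'+1}$ for suitable $N'$ (since $t^{N+1}$ kills the sub and the quotient, $t^{2N+2}$ kills the middle term), hence again lies in $\coh(\kx)_0$; and similarly for higher Yoneda $\Ext$s, represented by exact sequences, every intermediate term is $t$-power torsion. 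This shows $\coh(\kx)_0\subset\coh(\kx)$ is closed under extensions and that Yoneda $\Ext$ computed in $\coh(\kx)_0$ agrees with that computed in $\coh(\kx)$; that is precisely what makes $\Db(\coh(\kx)_0)\to\Db(\coh(\kx))$ fully faithful, proving (i).

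For part (ii), the target is now $\Db(\kx)=\Db_{\rm coh}(\Mod\kx)$ rather than $\Db(\coh(\kx))$, so I would factor the functor as $\Db(\coh(\kx)_0)\to\Db(\coh(\kx))\to\Db(\kx)$ and use (i). It remains to see that the restriction of the (non-equivalence) \eqref{eqn:natfunct} to the torsion subcategories is an equivalence onto $\Db_0(\kx)$. Essential surjectivity is d\'evissage as before: any object of $\Db_0(\kx)$ has cohomology in $\coh(\kx)_0$, hence lies in the triangulated subcategory generated by the image. For full faithfulness one again reduces to comparing $\Ext^i_{\coh(\kx)}(A,B)$ with $\Ext^i_{\Mod\kx}(A,B)$, i.e.\ with $\Hom_{\Db(\kx)}(A,B[i])$, for $A,B\in\coh(\kx)_0$. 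Here I would compute the latter by a locally free resolution of $A$ on $\kx$ (which exists locally, and one uses that $\kx$ is smooth, via Remark \ref{rem:notperfect}) and observe that, since $B$ is coherent, $\Ext$-sheaves are coherent and computing $\Hom$ over $\Mod\kx$ versus over $\coh(\kx)$ gives the same answer — this is the usual statement that for a noetherian (formal) scheme, $\Db(\coh)\to\Db_{\rm coh}(\Mod)$ is fully faithful on objects with coherent cohomology, the subtlety in \eqref{eqn:natfunct} being only essential surjectivity onto all of $\Db(\kx)$, which is irrelevant once we have restricted to $\Db_0$.

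The main obstacle I anticipate is being careful about the last point: ensuring that $\Hom_{\Db(\kx)}(A,B[i])$ really is computed by coherent data and equals the Yoneda $\Ext$ in $\coh(\kx)$, despite $\Mod\kx$ being a larger category and $\kx$ being only formal rather than projective. The clean way around it is to work locally on the underlying space $X$ (which is a genuine noetherian scheme), use that coherent sheaves on $\kx$ have local finite free resolutions, glue via a hypercohomology/Mayer--Vietoris spectral sequence, and invoke that each $\kx_n$ is an honest noetherian scheme so that $\Db(\coh(\kx_n))\congpf\Db(\kx_n)$; combined with $A,B$ being modules over some $\kx_N$, this pins down all the $\Ext$-comparisons. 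Everything else is formal truncation/d\'evissage and the extension-closedness established in (i).
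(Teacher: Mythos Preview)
Your argument for (i) has a real gap. The claim that in a length-$n$ Yoneda extension $0\to B\to E_1\to\cdots\to E_n\to A\to 0$ in $\coh(\kx)$ with $A,B\in\coh(\kx)_0$ ``every intermediate term is $t$-power torsion'' is false for $n\geq 2$. For instance, with $A=B=\ko_X$ the sequence
\[
0\longrightarrow\ko_X\longrightarrow\ko_X\oplus\ko_\kx\longrightarrow\ko_\kx\longrightarrow\ko_X\longrightarrow 0
\]
(inclusion into the first summand; $(a,b)\mapsto tb$; reduction modulo $t$) is exact in $\coh(\kx)$, yet neither intermediate term is $R$-torsion. Closure under extensions controls only $\Ext^1$; for higher $\Ext$ you would need that every such Yoneda class is \emph{equivalent} to one with torsion intermediate terms, and that Yoneda equivalences in $\coh(\kx)$ can themselves be realized inside $\coh(\kx)_0$---neither is automatic. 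The paper sidesteps the $\Ext$-comparison entirely by invoking a different criterion (the dual of \cite[Lemma~3.6]{HFM}): it suffices that for every monomorphism $f:E\hookrightarrow E'$ in $\coh(\kx)$ with $E$ torsion there exist $g:E'\to E''$ with $E''$ torsion such that $g\circ f$ is still injective. This is proved by applying the Artin--Rees lemma to the filtration $E\cap t^kE'$, and that is the substantive input your sketch lacks.

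For (ii) you take a route the paper deliberately avoids. Your reduction hinges on the assertion that $\Db(\coh(\kx))\to\Db(\kx)$ is fully faithful, with the failure of \eqref{eqn:natfunct} being ``only essential surjectivity''. The paper makes no such claim, and your sketch---local free resolutions, gluing, reducing to $\kx_N$ because $A,B$ live there---does not settle it: the relevant $\Ext$-groups are computed over $\ko_\kx$, not over $\ko_{\kx_N}$, so the equivalence $\Db(\coh(\kx_N))\cong\Db(\kx_N)$ for the noetherian scheme $\kx_N$ does not directly control them. Indeed, in the remark following Proposition~\ref{prop:derallsame} the authors only speculate that $\Db(\coh(\kx))\cong\Db(\kx)$ might hold in the surface case, as a \emph{consequence} of their main results. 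The paper's proof of (ii) instead routes through the abelian category $\qcoh(\kx)_{\rm d}$ of discrete quasi-coherent sheaves: one has $\Db(\coh(\kx)_0)\cong\Db_{\rm coh}(\qcoh(\kx)_{\rm d})$ by a noetherian-limit argument (Lemma~\ref{lem:accessory4} together with \cite[Prop.~3.8]{Yek}), and then Yekutieli's theorem \cite[Thm.~4.8]{Yek} identifies $\Db(\qcoh(\kx)_{\rm d})$ with the full subcategory of $\Db(\Mod\kx)$ of complexes with discrete cohomology, the inverse functor being $R\Gamma_{\rm d}$.
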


\begin{proof}
i) It suffices to show (see the dual version of \cite[Lemma
3.6]{HFM}) that for any monomorphism $f : E \hookrightarrow E'$ in
$\coh (\kx)$ , with $E \in \coh (\kx)_0$, there exists $g : E'
\mor E''$, with $E''\in \coh (\kx)_0$ such that $g \circ f$  is
injective.

By the Artin--Rees Lemma, we know that the filtration $E_k := E
\cap t^k E'$ is $t$-stable, that is, there is some $n \in \NN$
such that $tE_k = E_{k+1}$, whenever $k \geq n$. Let $\ell$ be a
positive integer such that $t^\ell E = 0$ and let $g:E'\to E'' :=
E' / t^{n+\ell}E'$ be the projection. The composition $g\circ f$
is injective, as $\ker(g\circ f)= E_{n+\ell} = t^\ell E_n
\hookrightarrow t^\ell E = 0$.

ii) We follow Yekutieli \cite{Yek}, but see also \cite{AJL}. Let
$\qcoh(\kx)\subset\Mod\kx$ be the full abelian subcategory of
quasi-coherent sheaves on $\kx$, i.e.\ of sheaves which are
locally cokernels of $\ko_\kx^I\to\ko_\kx^J$ for some index sets
$I,J$.  Then define $\qcoh(\kx)_{\rm d}\subset\qcoh(\kx)$ as the
full thick abelian subcategory of discrete quasi-coherent sheaves (see the Appendix for the definition of thick abelian subcategory).
By definition,  a sheaf $E$ on $\kx$ is  \emph{discrete} if the
natural functor $\Gamma_{\rm d}(E):=\lim\kh om(\ko_{\kx_n},E)\to
E$ is an isomorphism.

Clearly, a coherent sheaf on $\kx$ is discrete if and only if it
is $R$-torsion, i.e.\ $\coh(\kx)_0=\coh(\kx)\cap\qcoh(\kx)_{\rm
d}$ which is a thick subcategory of $\qcoh(\kx)_{\rm d}$.
Moreover, by \cite[Prop.\ 3.8]{Yek} every $E\in\qcoh(\kx)_{\rm d}$
is the limit of coherent $R$-torsion sheaves. Thus $\Db_{\coh(\kx)_0}(\qcoh(\kx)_{\rm d})$ is the same as $\Db_\mathrm{coh}(\qcoh(\kx)_{\rm d})$. Lemma
\ref{lem:accessory4} below gives an equivalence $\Db(\coh(\kx)_0)\cong\Db_{\coh(\kx)_0}(\qcoh(\kx)_\mathrm{d})$. Hence we conclude the equivalence
$\Db(\coh(\kx)_0)\cong\Db_{\rm coh}(\qcoh(\kx)_{\rm d})$.

Finally, one applies \cite[Thm.\ 4.8]{Yek} which asserts that
the natural functor induces an equivalence of $\Db(\qcoh(\kx)_{\rm
d})$ with the full triangulated subcategory of $\Db(\Mod\kx)$ of
all complexes with cohomology in $\qcoh(\kx)_{\rm d}$. (The
inverse functor is given by $R\Gamma_{\rm d}$.) Adding the
condition that the cohomology be coherent proves ii).
\end{proof}

\begin{lem}\label{lem:accessory4}
Let $\ka \subseteq \kb$ be a full thick abelian subcategory of an
abelian category $\kb$ with infinite direct sums. Assume that
every object of $\kb$ is the direct limit of its subobjects
belonging to $\ka$ and that  $\ka$ is noetherian (i.e.\ every
ascending sequence of subobjects is stationary). Then the natural
functor yields an equivalence
$$\Db (\ka) \congpf \Db_{\ka} (\kb),$$
where $\Db_{\ka} (\kb)$ is the full triangulated subcategory of $\Db(\kb)$ of complexes with cohomology in $\ka$.
\end{lem}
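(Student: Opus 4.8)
The plan is to prove the equivalence $\Db(\ka)\to\Db_\ka(\kb)$ by the standard two-step strategy for such comparison results: first show the functor is essentially surjective, then show it is fully faithful, both reductions ultimately resting on a resolution statement — every object of $\kb$ lying in $\ka$-filtered colimits can, up to the relevant derived-categorical data, be replaced by an object of $\ka$. Concretely, the key technical input I would isolate is the following: \emph{for every object $B\in\kb$ and every ascending chain condition available in $\ka$, and for every finite collection of $\ko$-cohomology classes attached to a bounded complex, there is a subobject $A\subseteq B$ with $A\in\ka$ ``large enough'' to carry those classes.} This is exactly where noetherianity of $\ka$ and the hypothesis that every object of $\kb$ is the directed colimit of its $\ka$-subobjects get used: a bounded complex with cohomology in $\ka$ has, in each degree, boundaries and cycles that are objects of $\ka$ (since $\ka$ is a thick abelian subcategory, hence closed under subobjects, quotients and extensions inside $\kb$), and any morphism or homotopy between such complexes involves only finitely many arrows, each of which factors through some $\ka$-subobject by the colimit hypothesis; noetherianity lets one enlarge finitely many such subobjects to a common one without running into an infinite regress.

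For essential surjectivity, I would take $\cp B\in\Db_\ka(\kb)$, a bounded complex with cohomology in $\ka$. Using the thickness of $\ka$ in $\kb$, I would argue that one can replace $\cp B$ by a quasi-isomorphic bounded complex built from objects of $\ka$: truncate to reduce to the case where all cohomology sits in a bounded range, then proceed by induction on the cohomological amplitude, at each stage writing the highest (or lowest) nonzero cohomology $H\in\ka$ and using the colimit hypothesis together with noetherianity to find an $\ka$-subobject of the relevant term surjecting onto $H$; splicing these together and invoking a standard ``resolution by a subcategory'' lemma (the dual of the argument already used in the proof of Proposition \ref{prop_dertorsion}(i), or a Beilinson-type d\'evissage) produces a complex in $\Db(\ka)$ mapping quasi-isomorphically to $\cp B$. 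The point is that each individual step only requires producing \emph{one} $\ka$-subobject with a finite list of properties, which the hypotheses guarantee.

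For full faithfulness, I would use the standard description of morphisms in a derived category via roofs $\cp A\mor[\sim]\cp B'\leftarrow\cp A'$ and show that any such roof with $\cp A,\cp A'\in\Db(\ka)$ and $\cp B'\in\Db(\kb)$ can be refined to one with all three terms in $\Db(\ka)$; similarly any homotopy exhibiting two morphisms in $\Db(\ka)$ as equal in $\Db_\ka(\kb)$ can be taken inside $\Db(\ka)$. Again this reduces to: given a bounded complex $\cp B'$ with cohomology in $\ka$ receiving a quasi-isomorphism from $\cp A\in\Db(\ka)$, find a subcomplex $\cp A'\subseteq\cp B'$ with terms in $\ka$ through which the quasi-isomorphism already factors — which is the essential-surjectivity argument applied relatively, i.e.\ to the cone, using that the cone has cohomology in $\ka$ and is acyclic outside a bounded range. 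The genuinely delicate point, and the one I expect to be the main obstacle, is precisely the interplay of finiteness and the colimit: one must be careful that the finitely many data (differentials of a bounded complex, components of a chain map, components of a homotopy, and the finitely many relations among them coming from $d^2=0$ and the homotopy identity) can all be realized simultaneously inside a \emph{single} object of $\ka$; this is where the noetherian hypothesis is indispensable, since it prevents the naive ``keep enlarging'' procedure from requiring a transfinite limit, and it is the step I would write out with the most care, modeling it on \cite[Prop.\ 3.8]{Yek} or the analogous d\'evissage lemmas in \cite{AJL} rather than reproving it from scratch.
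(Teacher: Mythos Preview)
Your approach is workable but considerably more elaborate than the paper's. The paper's proof is a few lines: it invokes the criterion of \cite[Lemma~3.6]{HFM} (precisely the non-dual version of what you yourself cite in passing), which says that $\Db(\ka)\to\Db_\ka(\kb)$ is an equivalence as soon as every surjection $E\twoheadrightarrow E'$ in $\kb$ with $E'\in\ka$ admits a morphism $G\to E$ from some $G\in\ka$ whose composite $G\to E'$ is still surjective. That single lifting property is then checked directly: write $E$ as the direct limit of its $\ka$-subobjects $E_i$, set $E'_k:=\im\bigl(\bigoplus_{i\le k}E_i\to E\to E'\bigr)$, observe that this is an ascending chain of subobjects of $E'\in\ka$, so by the noetherian hypothesis it stabilizes at $E'$, and take $G=\bigoplus_{i\le k}E_i\in\ka$ for $k\gg0$. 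The criterion thus packages essential surjectivity and full faithfulness into one verification, bypassing your separate treatment of roofs and homotopies. Your route would go through, but it re-derives from scratch what the cited lemma already encapsulates; you even name the right lemma (``the dual of the argument in Proposition~\ref{prop_dertorsion}(i)'') but deploy it only as an ingredient rather than as the whole engine.

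One genuine slip in your preamble: you assert that in a complex with cohomology in $\ka$ the cycles and boundaries lie in $\ka$ because $\ka$ is thick, ``hence closed under subobjects, quotients and extensions''. This is wrong on two counts. In the paper's terminology \emph{thick} means only closed under extensions, not that $\ka$ is a Serre subcategory; and even for a Serre subcategory, cycles and boundaries of a complex whose cohomology it contains need not lie in it (an exact bounded complex of arbitrary $\kb$-objects has zero cohomology but arbitrary cycles). Fortunately your subsequent argument sketch does not actually lean on this claim, so it is a misstep in the motivation rather than a fatal gap.
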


\begin{proof}
Let $f : E \epi E'$ be a surjection in $\kb$, with $E' \in \ka$.
We need to show that there exists a morphism $g: G\to E$ with $G
\in \ka$ such that $f \circ g : G \to E'$ is again surjective (see
e.g.\ \cite[Lemma 3.6]{HFM}).

By assumption, there exists a direct system of objects $\{ E_i \}$
in $\ka$ such that $\lim E_i \cong E$. Hence, there exists a
surjection $j: \bigoplus_i E_i \epi E \epi E'$.

Then let $E_k':=\im\left(\overset{k}{\underset{i=0}{\bigoplus}}
E_i \to E \epi E'\right)$, which form an ascending sequence of
subobjects of $E'$. Since $\ka$ is a noetherian, the sequence $\{
E_k' \}$ stabilizes, and,  as $j$ is surjective, $E_k' = E'$ for
$k\gg0$. Then set
$G:=\overset{k}{\underset{i=0}{\bigoplus}}E_i\in\ka$, for some
$k\gg0$, and let $g$ be the natural morphism.
\end{proof}

\begin{remark}\label{rem:smallest}
i) The equivalences of Proposition \ref{prop_dertorsion} put in
one diagram read
\begin{equation}
\Db(\coh(\kx)_0)\cong\Db_0(\coh(\kx))\cong\Db_0(\kx).
\end{equation}

ii) The categories $\Db_0(\kx)\subset\Db(\kx)$ and
$\Db(\coh(\kx)_0)\subset\Db(\coh(\kx))$ can also be described as
the smallest full triangulated subcategories containing all
$R$-torsion coherent sheaves. Here, a sheaf $E\in\coh(\kx_n)$ is
at the same time considered as an object in $\Db(\kx)$ and
$\Db(\coh(\kx))$. This is clear, as any bounded complex with
$R$-torsion cohomology can be filtered (in the triangulated sense)
with quotients being translates of such sheaves.
\end{remark}

\medskip

In the introduction we have already defined the \emph{derived category of the general fibre}
$\Db(\kx_K)$, i.e.\ the Verdier quotient
$$\Db(\kx_K):=\Db(\kx)/\Db_0(\kx)=\Db_{\rm coh}(\Mod\kx)/\Db_{\coh(\kx)_0}(\Mod\kx).$$ One can also consider the quotient
$\Db(\coh(\kx))/\Db_0(\coh(\kx))$ which, for a lack of a better
notation, will be called
$$\Db(\kx_K^c):=\Db(\coh(\kx))/\Db_0(\coh(\kx)).$$
(For a thorough discussion of the Verdier quotient see the Appendix.)

In both cases, the quotients are triangulated and the natural
projections
\begin{equation}\label{disp:twoproj}
\xymatrix{\Db(\kx)\ar[r]&\Db(\kx_K)~~~{\rm and}~~~
\Db(\coh(\kx))\ar[r]&\Db(\kx_K^c)}\end{equation} are exact. The
image of a complex $E$ under any of these projections shall be
denoted $E_K$.

\begin{remark}
As $\coh(\kx)_0\subset\coh(\kx)\subset \Mod\kx$ are Serre
subcategories, the subcategories $\Db_0(\Mod\kx)\subset\Db(\kx)$
and $\Db_0(\coh(\kx))\subset \Db(\coh(\kx))$ are thick. This means that the
direct summands of their objects are again contained in the
subcategories. This has the consequence that the kernel of the two
projections in (\ref{disp:twoproj}) are indeed $\Db_0(\kx)$ and $\Db_0(\coh(\kx))$ respectively.
\end{remark}

\begin{prop}\label{pro:homgamma}
The triangulated category $\Db(\kx_K)$ is $K$-linear and for all
$E,E'\in\Db(\kx)$ the natural projection $\Db(\kx)\to\Db(\kx_K)$
induces $K$-linear isomorphisms
$$\Hom_{\Db(\kx)}(E,E')\otimes_RK\congpf\Hom_{\Db(\kx_K)}(E_K,E'_K).$$
Similarly, $\Db(\kx_K^c)$ is $K$-linear and for
$E,E'\in\Db(\coh(\kx))$ one has
$$\Hom_{\Db(\coh(\kx))}(E,E')\otimes_RK\congpf\Hom_{\Db(\kx^c_K)}(E_K,E'_K).$$
In particular, $\Db(\kx_K)$ and $\Db(\kx_K^c)$ have
finite-dimensional Hom-spaces over $K$.
\end{prop}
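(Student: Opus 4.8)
The plan is to prove the analogue of Proposition \ref{prop:Homcohgen} at the level of derived categories, exploiting that the Verdier quotient by a thick subcategory can be computed via a calculus of fractions. First I would recall that, since $\Db_0(\kx)\subset\Db(\kx)$ is thick, a morphism $E_K\to E'_K$ in $\Db(\kx_K)$ is represented by a roof $E\xleftarrow{s}E_0\xrightarrow{g}E'$ with $\mathrm{cone}(s)\in\Db_0(\kx)$, and two roofs are equal iff they can be dominated by a common third roof. Exactly as in the abelian case, the multiplication by $t^{-k}$ on such a roof is defined by replacing $s$ with $t^k s$; since $t^k$ acts invertibly on $E_K$ in $\Db(\kx_K)$ (its cone is the image of $E/t^kE$-type torsion complex, hence zero), this shows $\Hom_{\Db(\kx_K)}(E_K,E'_K)$ is a $K$-vector space and that the natural map $\eta:\Hom_{\Db(\kx)}(E,E')\to\Hom_{\Db(\kx_K)}(E_K,E'_K)$ extends to a $K$-linear map $\eta_K$ on $\Hom_{\Db(\kx)}(E,E')\otimes_R K$.

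For surjectivity of $\eta_K$, given a roof $(s,g)$ I would complete $s:E_0\to E$ to a triangle $E_0\xrightarrow{s}E\to C\to E_0[1]$ with $C\in\Db_0(\kx)$. By Remark \ref{rem:smallest}(ii), $C$ is a finite iterated extension of shifts of $R$-torsion coherent sheaves, and each such sheaf is annihilated by a power of $t$; hence $t^N\cdot\id_C=0$ for some $N$, so $t^N\cdot\id_{C[-1]}=0$. Multiplication by $t^N$ on the triangle then shows that $t^N\cdot s$ factors as $E_0\xrightarrow{s}E\xrightarrow{u}E_0$ up to the connecting map, more precisely that the roof $(t^N s, t^N g)$ is equivalent to a roof with left leg an isomorphism in $\Db(\kx)$ composed with an honest morphism $E\to E'$; thus $t^N f=\eta(h)$ for some $h\in\Hom_{\Db(\kx)}(E,E')$. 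For injectivity, if $\eta(f)=0$ then $f$ becomes zero after pullback along some $s:E_0\to E$ with cone $C\in\Db_0(\kx)$; i.e. $f\circ s=0$, so $f$ factors through $C[-1]\to E$ (rotate the triangle), $f=f'\circ w$ with $w:E\to C[-1]$; since $t^N$ kills $C[-1]$, $t^N f=0$, so $f\otimes 1=0$ in $\Hom\otimes_R K$. The argument for $\Db(\kx_K^c)$ is identical, using Proposition \ref{prop_dertorsion}(i) and the same structure of $\Db_0(\coh(\kx))$. Finite-dimensionality over $K$ follows because $\Hom_{\Db(\kx)}(E,E')$ is a finitely generated $R$-module (cohomology sheaves of the complexes are coherent, $\kx_0$ is projective, and an spectral sequence / completeness argument bounds the Ext groups), so its tensor product with $K$ is finite-dimensional.

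The main obstacle I anticipate is making the "factorization up to the connecting homomorphism'' rigorous: multiplying a distinguished triangle by the scalar $t^N$ and deducing a genuine factorization of $t^N s$ through the identity requires care, because triangles are not functorial and a naive diagram chase only yields the factorization after a further refinement of the roof. The cleanest route is probably the octahedral axiom: from the triangle on $s$ and the triangle on $t^N:\,E\to E$, build an octahedron whose fourth vertex sees both cones, and read off that $t^N s$ admits a section-like splitting modulo $\Db_0(\kx)$. This is exactly the derived analogue of the two commutative diagrams used in the proof of Proposition \ref{prop:Homcohgen}, and once it is set up correctly the rest is formal. A secondary, more bookkeeping-type point is checking that the $t^{-1}$-action is well-defined on equivalence classes of roofs (independence of the representative), which again reduces to the observation that $t$ commutes with every morphism in the $R$-linear category $\Db(\kx)$.
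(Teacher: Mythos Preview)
Your approach is exactly what the paper does---its entire proof is the single sentence ``As we work with bounded complexes, the proof of Proposition~\ref{prop:Homcohgen} carries over.'' You have correctly identified the translation: objects of $\Db_0(\kx)$ are annihilated by some power of $t$, and this replaces the role of $\ker(s_0),\coker(s_0)\in\coh(\kx)_0$ in the abelian argument.

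A few slips in your execution are worth fixing, and doing so dissolves the obstacle you anticipate. For surjectivity, the clean statement is: from the triangle $E_0\xrightarrow{s}E\xrightarrow{p}C$ and $t^N\cdot\id_C=0$ one gets $p\circ t^N_E=t^N_C\circ p=0$, hence $t^N_E=s\circ u$ for some $u:E\to E_0$ by the long exact sequence for $\Hom(E,-)$. Composing the roof $(\id_E,t^N_E)$ with $(s,g)$ then gives directly $t^Nf=[(\id_E,\,g\circ u)]=\eta(g\circ u)$. No octahedron is needed; this is precisely the derived analogue of the two commutative diagrams in the abelian proof. (Note that the roof $(t^Ns,t^Ng)$ you wrote represents $f$ itself, not $t^Nf$.) For injectivity, $f\circ s=0$ forces $f$ to factor through $p:E\to C$ (not through a map $C[-1]\to E$, which does not exist in the triangle), i.e.\ $f=f'\circ p$ with $f':C\to E'$; then $t^Nf=f'\circ t^N_C\circ p=0$. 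Again this is just the long exact sequence, now for $\Hom(-,E')$.
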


\begin{proof}
As we work with bounded complexes, the proof of Proposition
\ref{prop:Homcohgen} carries over.
\end{proof}




\subsection{Derived functors and Fourier--Mukai transforms}\label{sect:derfunc}

First of all we prove that the usual derived functors (tensor
product, pull-back push-forward, Hom's) are well-defined in the
geometric setting we are dealing with.

\begin{prop}\label{prop:recallderfunc}
Let $f,g:\kx\to\kx'$ be morphisms of smooth and proper formal
schemes over $\Spf(R)$ and assume $f$ to be proper. Then the
following $R$-linear functors are defined:
\[
R\kh
om_{\kx}(-,-):\Db(\kx)^{\mathrm{op}}\times\Db(\kx){\xymatrix@1@=19pt{\ar[r]&}}\Db(\kx),
\]
\[
(-)\otimes^L(-):\Db(\kx)\times\Db(\kx){\xymatrix@1@=19pt{\ar[r]&}}\Db(\kx),
\]
\[
Lg^*:\Db(\kx'){\xymatrix@1@=19pt{\ar[r]&}}\Db(\kx),
\]
\[
Rf_*:\Db(\kx){\xymatrix@1@=19pt{\ar[r]&}}\Db(\kx'),
\]
\[
R\Hom_{\Db(\kx)}(-,-):\Db(\kx)^{\mathrm{op}}\times\Db(\kx){\xymatrix@1@=19pt{\ar[r]&}}\Db(R\text{-}\mathbf{mod}),
\]
where we denote by $R\text{-}\mathbf{mod}$ the abelian category of
$R$-modules of finite rank.
\end{prop}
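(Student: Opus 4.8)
The plan is to split the statement into an existence part and a finiteness part. For existence I would recall that all five functors are already available on the unbounded derived categories $\D(\Mod\kx)$ and $\D(\Mod{\kx'})$: the category $\Mod\kx$ of $\ko_\kx$-modules is a Grothendieck abelian category, so it has enough K-injective resolutions (yielding $Rf_*$, $R\sHom_\kx$ and $R\Hom$) and enough K-flat resolutions (yielding the derived tensor product and $Lg^*$); on formal schemes this is carried out in \cite{AJL}. Since each functor arises by deriving an $R$-linear functor of additive categories, the derived functor is again $R$-linear. So the whole content of the proposition is that each functor sends objects of $\Db(\kx)=\Db_{\rm coh}(\Mod\kx)$ (resp.\ of $\Db(\kx')$) to complexes which are bounded and have coherent cohomology.

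For $R\sHom_\kx$, the derived tensor product $\lotimes$ and $Lg^*$ this can be checked locally, and the verification rests on the perfectness recalled in Remark \ref{rem:notperfect}. Since $\kx$ and $\kx'$ are smooth, over a sufficiently small affine open any object of $\Db(\kx)$ or $\Db(\kx')$ is represented by a finite complex of free $\ko$-modules of finite rank, and by quasi-compactness of the underlying space the length of such a representative is globally bounded. Restriction to an open subset is an open-immersion pullback, and therefore commutes with $Lg^*$, with $\lotimes$ and with $R\sHom_\kx$; hence it suffices to observe that $g^*$ of a finite complex of finite free modules is again one, that the total complex of the termwise tensor product of two such complexes is again one, and that $R\sHom_\kx$ out of such a complex is computed termwise and has coherent (indeed perfect) terms. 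On each member of an open cover the output is thus a bounded complex with coherent cohomology, which proves the claim for these three functors.

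The case of $Rf_*$ is global, and this is where the hypothesis that $f$ is proper is genuinely needed; I expect this to be the main obstacle. Boundedness of $Rf_*E$ for $E\in\Db(\kx)$ is the easy part: the underlying topological spaces are noetherian of finite Krull dimension, so $Rf_*$ has bounded cohomological amplitude, and together with the hypercohomology spectral sequence this reduces the coherence question to showing that $R^qf_*F$ is a coherent $\ko_{\kx'}$-module for every $F\in\coh(\kx)$. This is precisely the finiteness theorem for proper morphisms of locally noetherian formal schemes (Grothendieck), for which I would cite \cite{IllFGA} (see also \cite{AJL}). Hence $Rf_*$ maps $\Db(\kx)$ into $\Db(\kx')$.

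Finally, $R\Hom_{\Db(\kx)}(-,-)$ is the composite $R\Gamma(\kx,-)\circ R\sHom_\kx(-,-)$, i.e.\ $Rf_*\circ R\sHom_\kx(-,-)$ for the (proper) structure morphism $f:\kx\to\Spf(R)$. By the two preceding paragraphs $R\sHom_\kx(E,E')\in\Db(\kx)$, and then $Rf_*$ of it is a bounded complex of coherent $\ko_{\Spf(R)}=R$-modules; as $R$ is noetherian, coherent $R$-modules are precisely the finitely generated ones, i.e.\ those of finite rank in the sense of the statement, so $R\Hom_{\Db(\kx)}(E,E')\in\Db(R\text{-}\mathbf{mod})$. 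Assembling the four points completes the argument.
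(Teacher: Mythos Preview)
Your proof is correct and follows essentially the same two-step strategy as the paper: first invoke the existence of the derived functors on the unbounded derived categories of modules, then verify that bounded complexes with coherent cohomology are preserved using perfectness (Remark~\ref{rem:notperfect}). The paper's proof is much terser---it cites \cite{Spalt} rather than \cite{AJL} for the unbounded resolutions and then simply asserts that the finiteness part ``is clear since all complexes are perfect''---whereas you spell out the argument, in particular singling out $Rf_*$ and invoking Grothendieck's finiteness theorem for proper morphisms of formal schemes explicitly; this is a welcome elaboration of a point the paper leaves implicit.
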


\begin{proof}
Due to \cite{Spalt}, the functors previously considered are all
well-defined if we work with unbounded derived categories of
modules $\D (\Mod{\kx})$, $\D (\Mod{\kx'})$ and $\D
(R\text{-}\mathbf{Mod})$ (here $R\text{-}\mathbf{Mod}$ denotes the
abelian category of $R$-modules). To prove the proposition, we
only have to show that, by restricting the domain to the
corresponding derived categories of bounded complexes with
coherent cohomology, the images of these functors are still the
bounded derived categories of complexes with coherent cohomology.
This is clear since all complexes are perfect.\end{proof}

All the basic properties of the functors considered in the
previous proposition (e.g.\ commutativity, flat base change,
projection formula) hold in the formal context. For an object
$E\in\Db(\kx)$ a \emph{trace map} $\mathrm{tr}_{E}:E\dual\otimes
E\to\ko_{\kx}$ is well-defined (see \cite{Ill}).

Passing to the triangulated category $\Db(\kx_K)$ of the generic
fibre, the result in Proposition \ref{prop:recallderfunc} still
hold. Indeed, all the functors are $R$-linear and hence they
factorize through the category $\Db(\kx_K)$. Indeed,
$\kf\in\Db(\kx)$ is contained in $\Db_0(\kx)$ if and only if
$t^n\kf=0$ for $n\gg0$. Since the functors are $R$-linear, the
same would hold for the image of $\kf$ which would therefore  as
well be contained in the subcategory $\Db_0$. Thus we get the
following list of functors:
\[
R\kh
om_{\kx_K}(-,-):\Db(\kx_K)^{\mathrm{op}}\times\Db(\kx_K){\xymatrix@1@=19pt{\ar[r]&}}\Db(\kx_K),
\]
\[
(-)\otimes^L(-):\Db(\kx_K)\times\Db(\kx_K){\xymatrix@1@=19pt{\ar[r]&}}\Db(\kx_K),
\]
\[
Lg^*:\Db(\kx'_K){\xymatrix@1@=19pt{\ar[r]&}}\Db(\kx_K),
\]
\[
Rf_*:\Db(\kx_K){\xymatrix@1@=19pt{\ar[r]&}}\Db(\kx'_K),
\]
\[
R\Hom_{\Db(\kx_K)}(-,-):\Db(\kx_K)^{\mathrm{op}}\times\Db(\kx_K){\xymatrix@1@=19pt{\ar[r]&}}\Db(K\text{-}\mathbf{vect}),
\]
where we denote by $K\text{-}\mathbf{vect}$ the abelian category
of finite dimensional $K$-vector spaces. Of course, all the usual
relations between these functors continue to hold in $\Db(\kx_K)$.
In particular, given an object $E_K\in\Db(\kx_K)$, its \emph{dual}
$E_K\!\!\!\!\dual\in\Db(\kx_K)$  is well-defined and
$E_K\!\!\!\!\ddual\iso E_K$. Moreover, we have a \emph{trace map}
$\mathrm{tr}_{E_K}:E_K\!\!\!\!\dual\otimes E_K\to\ko_{\kx_K}$,
where $\ko_{\kx_K}$ is the image of $\ko_\kx$ in $\Db(\kx_K)$.



\medskip

Using those facts, we define Fourier--Mukai functors for formal
deformations or for the derived categories of the general fibres.
Indeed, consider two smooth and proper formal schemes
$\kx\to\Spf(R)$ and $\kx'\to\Spf(R)$ of dimension $d$ respectively
$d'$, with special fibres $X$ respectively $X'$. The fibre product
$\kx\times_R\kx'\to \Spf(R)$, described by the inductive system
$\kx_n\times_{R_n}\kx_n'$,  is again smooth and proper and its
special fibre is $X\times X'$. The two projections shall be called
$q:\kx\times_R\kx'\to\kx$ and $p:\kx\times_R\kx'\to\kx'$.

Let $\ke\in\Db(\kx\times_R\kx')$. Due to the results in the
previous section, one can consider the induced
\emph{Fourier--Mukai transform}
$$\xymatrix{\Phi_\ke:\Db(\kx)\ar[r]&\Db(\kx')},~~~\xymatrix{E\ar@{|->}[r]&Rp_*(q^*E\otimes^L\ke).}$$
As before,  $\Phi_\ke$ is $R$-linear, for $\ke$ lives on the fibre
product over $\Spf(R)$.

For two given Fourier--Mukai transforms
$\Phi_{\ke}:\Db(\kx)\to\Db(\kx')$ and
$\Phi_\kf:\Db(\kx')\to\Db(\kx'')$ with $\kx''$ smooth and proper
formal scheme over $\Spf(R)$, the composition
$\Phi_\kf\circ\Phi_\ke$ is again a Fourier--Mukai transform with
kernel $\kf\ast\ke:=(p_{\kx,\kx''})_*(\ke\boxtimes\kf)$, where
$p_{\kx,\kx''}:\kx\times\kx'\times\kx''\to\kx\times\kx''$ is the
natural projection.

Left and right adjoint functors of a Fourier--Mukai transform
$\Phi_\ke$ can be constructed as Fourier--Mukai transforms as
follows. The left adjoint $\Phi_{\ke_{\rm L}}$ and the right
adjoint $\Phi_{\ke_{\rm R}}$ are the Fourier--Mukai transforms
with kernel
$$\ke_{\rm L}:=\ke\dual\otimes p^*\omega_{p}[d']~~~
{\rm respectively~~~}\ke_{\rm R}:=\ke\dual\otimes
q^*\omega_{q}[d],$$ where $d=\dim(\kx)$ and $d'=\dim(\kx')$. (To
adapt to this context the standard proof that those kernels define
the left and right adjoints of $\Phi_\ke$, we actually need Lemma
\ref{cor:AJL} which will be proved later.)

The adjunction morphisms $\Phi_{\ke_{\rm L}}\circ\Phi_\ke\to{\rm
id}_{\Db(\kx)}$ and $\Phi_{\ke}\circ\Phi_{\ke_{\rm R}}\to{\rm
id}_{\Db(\kx')}$ are isomorphisms if and only if the natural
morphisms ${\rm tr}_\kx:\ke_{\rm L}\ast\ke\to\ko_{\Delta_{\kx}}$
respectively ${\rm tr}_{\kx'}:\ke\ast\ke_{\rm
R}\to\ko_{\Delta_{\kx}}$ induced by the trace morphisms are
isomorphisms. Here $\Delta_\kx$ and $\Delta_{\kx'}$ denote the
relative diagonals in $\kx\times_R\kx$ respectively
$\kx'\times_R\kx'$. (Sometimes (see e.g.\ \cite{Cal1}) the
construction of the adjunction morphisms uses
Grothendieck--Verdier duality for certain embeddings, e.g.\ for
$\kx\times_R\kx'\hookrightarrow\kx\times_R\kx'\times_R\kx$. This
can easily be replaced by an argument using relative duality over
$R$ in the sense of Lemma \ref{cor:AJL} for the two sides.
We leave the details to the reader.)

\begin{remark}
Everything said also works for the non-reduced schemes
$\kx_n\to\Spec(R_n)$ and $\kx_n'\to\Spec(R_n)$ with the only
difference that we have to assume now that the Fourier--Mukai
kernel $\ke_n\in\Db(\kx_n\times_{R_n}\kx_n')$ is perfect. Then one
can consider the two $R_n$-linear functors
$$\xymatrix{\Phi_{\ke_n}:\Db(\kx_n)\ar[r]&\Db(\kx_n')}~~~{\rm
and}~~~\xymatrix{\Phi_{\ke_n}:\Dp(\kx_n)\ar[r]&\Dp(\kx_n').}$$
\end{remark}

Analogously, one wants to define the Fourier--Mukai transform
$$\xymatrix{\Phi_\kf:\Db(\kx_K)\ar[r]&\Db(\kx'_K)}$$
associated to an object $\kf\in\Db((\kx\times_R\kx')_K)$.

As the objects of $\Db(\kx\times_R\kx')$ are the same as those of $\Db((\kx\times_R\kx')_K)$ (see the Appendix), take
$\ke\in\Db(\kx\times_R\kx')$ such that $\ke_K\cong\kf$. Then, by
the $R$-linearity, the Fourier--Mukai transform
$\Phi_\ke:\Db(\kx)\to\Db(\kx')$ descends to a Fourier--Mukai
transform $\Phi_\kf:\Db(\kx_K)\to\Db(\kx'_K)$, i.e.\ one has a
commutative
diagram $$\xymatrix{\Db(\kx)\ar[d]_Q\ar[r]^{\Phi_\ke}&\Db(\kx')\ar[d]^Q\\
\Db(\kx_K)\ar[r]_{\Phi_\kf}&\Db(\kx_K').}$$
As the objects of $\Db(\kx)$ and $\Db(\kx_K)$ coincide, it is enough to check that $\Phi_\ke(\Db_0(\kx))=\Db_0(\kx')$.
Indeed,
$\kg\in\Db(\kx)$ is contained in $\Db_0(\kx)$ if and only if
$t^n\kg=0$ for $n\gg0$. As $\Phi_\ke$ is $R$-linear, this would
imply $t^n\Phi_{\ke}(\kg)=0$ and hence
$\Phi_{\ke}(\kg)\in\Db_0(\kx')$. The behavior of $\Phi_\kf$ on the level of morphisms is determined by Proposition \ref{pro:homgamma}. Moreover, $\Phi_\kf$ is
independent of the chosen lift $\ke$. From this, it follows that
right and left adjoints of Fourier--Mukai transforms as well as
trace maps pass to the triangulated category of the generic fibre.




\subsection{Fourier--Mukai equivalences of the general fibre}\label{sect:FM}
Here we will show that if the kernel of a Fourier--Mukai
equivalence deforms to a complex on some finite order deformation
or even to the general fibre, then it still induces derived
equivalences of the finite order deformations or general fibres,
respectively. This is certainly expected, as `being an
equivalence' should be an open property and indeed the proof
follows the standard arguments.

Consider two smooth and proper formal schemes  $\kx\to\Spf(R)$ and
$\kx'\to\Spf(R)$, with special fibres $X$ respectively $X'$. The
fibre product $\kx\times_R\kx'\to \Spf(R)$, described by the
inductive system $\kx_n\times_{R_n}\kx_n'\to\Spec(R_n)$,  is again
smooth and proper and its special fibre is $X\times X'$.

Set $\kx_\infty:=\kx$, $\kx'_\infty:=\kx'$, and $R_\infty:=R$
(notice that $\Dp(\kx\times_R\kx')\iso\Db(\kx\times_R\kx')$).

\begin{prop}\label{prop:extequiv}
Let $\ke_n\in\Dp(\kx_n\times_{R_n}\kx'_n)$, with
$n\in\NN\cup\{\infty\}$, be such that its restriction
$\ke_0:=Lj_n^*\ke_n$ to the special fibre $X\times X'$ is the
kernel of a Fourier--Mukai equivalence
$\Phi_{\ke_0}:\Db(X)\congpf\Db(X')$. Then the Fourier--Mukai
transform $\Phi_{\ke_n}:\Dp(\kx_n)\to\Dp(\kx'_n)$ is an
equivalence.
\end{prop}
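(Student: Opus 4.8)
The plan is to reduce the statement to the criterion already recorded in the text: the adjunction morphisms $\Phi_{\ke_{n,\mathrm L}}\circ\Phi_{\ke_n}\to\id$ and $\Phi_{\ke_n}\circ\Phi_{\ke_{n,\mathrm R}}\to\id$ are isomorphisms if and only if the induced trace maps $\mathrm{tr}_{\kx_n}\colon \ke_{n,\mathrm L}\ast\ke_n\to\ko_{\Delta_{\kx_n}}$ and $\mathrm{tr}_{\kx'_n}\colon \ke_n\ast\ke_{n,\mathrm R}\to\ko_{\Delta_{\kx'_n}}$ are isomorphisms. So everything comes down to checking these two morphisms of objects in $\Dp(\kx_n\times_{R_n}\kx_n)$ (resp.\ $\Dp(\kx'_n\times_{R_n}\kx'_n)$) are isomorphisms. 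Since $\Phi_{\ke_0}$ is an equivalence by hypothesis, the corresponding trace maps on the special fibre \emph{are} isomorphisms; the task is to propagate this from $X$ to $\kx_n$.

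First I would set up the compatibility of the adjoint kernels and the convolution product with the restriction functors $Lj_n^*\colon \Db(\kx_n\times_{R_n}\kx_n')\to\Db(X\times X')$. Because $\kx_n\times_{R_n}\kx_n'\to\Spec(R_n)$ is smooth and proper, the relative dualizing complexes are line bundles that restrict correctly, so $Lj_n^*(\ke_{n,\mathrm L})\cong (\ke_0)_{\mathrm L}$ and $Lj_n^*(\ke_{n,\mathrm R})\cong(\ke_0)_{\mathrm R}$; likewise flat base change along $\Spec(\CC)\to\Spec(R_n)$ gives $Lj_n^*(\kf\ast\ke)\cong (Lj_n^*\kf)\ast(Lj_n^*\ke)$, and the trace maps are compatible with $Lj_n^*$ (this uses Lemma \ref{cor:AJL}, as flagged in the text, for the case $n=\infty$). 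Consequently the cone $C_n$ of $\mathrm{tr}_{\kx_n}\colon \ke_{n,\mathrm L}\ast\ke_n\to\ko_{\Delta_{\kx_n}}$ restricts to the cone of $\mathrm{tr}_X$, which vanishes; so $Lj_n^*C_n\cong 0$.

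The key step is then: a perfect complex $C_n$ on $\kx_n\times_{R_n}\kx_n$ (or on $\kx\times_R\kx$ when $n=\infty$) with $Lj_n^*C_n\cong 0$ is itself zero. For finite $n$ this is a standard Nakayama-type argument on $\Spec(R_n)$: writing $I=(t)$, the complex $C_n$ is perfect, one filters $R_n$ by powers of $I$, and $C_n\lotimes^{}_{R_n}\mathrm{gr}_I R_n$ is computed in terms of $Lj_n^*C_n=0$, forcing $C_n=0$ by descending induction on the length. For $n=\infty$, $C=C_\infty$ is perfect on $\kx\times_R\kx$, hence $t$-complete, and $Lj_0^*C\cong 0$; so $C\lotimes^{}_R \CC\cong 0$ and completeness (equivalently, coherence of the cohomology sheaves, which are then $t$-divisible and finitely generated, hence zero) gives $C\cong 0$. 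Once $C_n\cong 0$, the trace $\mathrm{tr}_{\kx_n}$ is an isomorphism; the same argument applied to $\mathrm{tr}_{\kx'_n}$ finishes the proof, by the adjunction criterion.

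The main obstacle is the vanishing step for $n=\infty$: one must be careful that perfect complexes on the formal scheme $\kx\times_R\kx$ are genuinely $t$-adically complete (so that $C\lotimes_R\CC\cong 0$ implies $C\cong 0$), rather than merely that their cohomology sheaves have vanishing special fibre — for a non-complete $R$-module, $M\otimes_R\CC=0$ does not force $M=0$. Here one uses Remark \ref{rem:notperfect} (bounded coherent complexes on the smooth formal scheme are perfect) together with coherence of the cohomology sheaves of $C$ over $\ko_{\kx\times_R\kx}$: each $\ch^i(C)$ is coherent with $\ch^i(C)\otimes_R\CC=0$, hence $t\cdot\ch^i(C)=\ch^i(C)$, and coherence forces $\ch^i(C)=0$ by the usual Artin--Rees/Nakayama argument over the complete local ring $R$.
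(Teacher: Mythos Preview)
Your proof is correct and follows essentially the same route as the paper: take the cone of the trace map, restrict to the special fibre, use that $\Phi_{\ke_0}$ is an equivalence to see the restricted cone vanishes, and conclude that the cone itself vanishes. The paper states the last implication (``$\kg_n$ has trivial restriction to the special fibre and therefore $\kg_n\cong0$'') in one line, whereas you spell out the Nakayama-type argument behind it; one small refinement is that from $Lj_n^*C_n\cong0$ you should deduce $\ch^i(C_n)\otimes_R\CC=0$ by looking at the top nonvanishing cohomology first and then descending, rather than asserting it for all $i$ at once.
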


\begin{proof}
It suffices to show that in both cases left and right adjoint
functors are quasi-inverse. Complete the trace morphism to a
distinguished triangle
$$\xymatrix{(\ke_n)_{\rm L}\ast\ke_n\ar[r]^-{{\rm
tr}_{\kx_n}}&\ko_{\Delta_{\kx_n}}\ar[r]&\kg_n.}$$ Restricting it
to the special fibre yields the distinguished triangle
$$\xymatrix{(\ke_0)_{\rm L}\ast\ke_0\ar[r]^-{{\rm
tr}_X}&\ko_{\Delta_{X}}\ar[r]&\kg_0.}$$ (Use that the pull-back of
the trace is the trace. Also the restriction of $(\ke_n)_{\rm L}$
yields the kernel of the left adjoint of the restriction $\ke_0$.)

As by assumption $\Phi_{\ke_0}:\Db(X)\to\Db(X')$ defines an
equivalence, the cone $\kg_0$ is trivial. Thus,
$\kg_n\in\Db(\kx_n\times_{R_n}\kx'_n)$ has trivial restriction to
the special fibre $X\times X'$ and, therefore, $\kg_n\cong0$. This
shows that ${\rm tr}_{\kx_n}$ is an isomorphism. A similar
argument proves that ${\rm tr}_{\kx'_n}$ is an isomorphism for the
case of the right adjoint.
\end{proof}

Under the assumptions of the previous proposition, the same proof
also yields an equivalence
$\Phi_{\ke_n}:\Db(\kx_n)\to\Db(\kx'_n)$.

\begin{cor}\label{prop:genericalsoequiv}
Let $\ke\in\Db(\kx\times_R\kx')$, such that
$\Phi_{\ke_0}:\Db(X)\congpf\Db(X')$ is an equivalence. Then the
Fourier--Mukai transform
$\Phi_{\ke_K}:\Db(\kx_K)\congpf\Db(\kx'_K)$ is an equivalence,
where $\ke_K$ denotes the image of $\ke$ in
$\Db((\kx\times_R\kx')_K)$.
\end{cor}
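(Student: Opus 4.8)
The plan is to mimic the proof of Proposition \ref{prop:extequiv}, now with $n=\infty$, and then pass to the Verdier quotient using the $R$-linearity established in Section \ref{sect:derfunc}. First I would recall that since $\kx\times_R\kx'$ is a smooth (proper) formal scheme, Remark \ref{rem:notperfect} gives $\Dp(\kx\times_R\kx')\cong\Db(\kx\times_R\kx')$, so the kernel $\ke$ is automatically perfect and $\Phi_\ke:\Db(\kx)\to\Db(\kx')$ is defined, together with its left and right adjoint Fourier--Mukai transforms $\Phi_{\ke_{\rm L}}$, $\Phi_{\ke_{\rm R}}$ (with the explicit kernels written down just before the statement). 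By the remark following Proposition \ref{prop:extequiv}, the argument given there in the case $n=\infty$ already shows that $\Phi_\ke:\Db(\kx)\to\Db(\kx')$ is itself an equivalence: complete the trace map ${\rm tr}_\kx:\ke_{\rm L}\ast\ke\to\ko_{\Delta_\kx}$ to a distinguished triangle with cone $\kg$, restrict to the special fibre $X\times X'$ (using that pull-back of the trace is the trace, and that restriction of $\ke_{\rm L}$ is the kernel of the left adjoint of $\ke_0$) to get a triangle with cone $\kg_0$, which is zero because $\Phi_{\ke_0}$ is an equivalence; then $\kg$ has trivial restriction to the special fibre, hence $t^n\kg$ is supported on smaller and smaller neighbourhoods — more precisely $\kg\in\Db(\kx\times_R\kx')$ with $Lj_0^*\kg\cong 0$ forces $\kg\cong 0$ by Nakayama/completeness. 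The same for ${\rm tr}_{\kx'}$, so $\Phi_{\ke_{\rm L}}$ and $\Phi_{\ke_{\rm R}}$ are quasi-inverse to $\Phi_\ke$.

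Next I would descend to the general fibre. By the discussion in Section \ref{sect:derfunc}, the $R$-linearity of $\Phi_\ke$ implies $\Phi_\ke(\Db_0(\kx))\subseteq\Db_0(\kx')$ (an object $\kg$ lies in $\Db_0$ iff $t^n\kg=0$ for $n\gg0$, a condition preserved by $R$-linear functors), and likewise for $\Phi_{\ke_{\rm L}}$ and $\Phi_{\ke_{\rm R}}$; hence each descends to an exact functor on the Verdier quotients, fitting into commutative squares with the projections $Q:\Db(\kx)\to\Db(\kx_K)$ etc. Since $\ke_K\cong\ke'_K$ implies $\Phi_{\ke_K}\cong\Phi_{\ke'_K}$ (independence of the lift, noted in Section \ref{sect:derfunc}), the induced functor is well-defined and its adjoints on the quotient are induced by $\ke_{{\rm L},K}$ and $\ke_{{\rm R},K}$. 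The adjunction (co)units $\Phi_{\ke_{\rm L}}\circ\Phi_\ke\Rightarrow\id_{\Db(\kx)}$ and $\Phi_\ke\circ\Phi_{\ke_{\rm R}}\Rightarrow\id_{\Db(\kx')}$ are isomorphisms upstairs by the previous paragraph, and applying the exact quotient functor sends these natural isomorphisms to natural isomorphisms of the corresponding functors on $\Db(\kx_K)$ and $\Db(\kx'_K)$. Therefore $\Phi_{\ke_K}:\Db(\kx_K)\to\Db(\kx'_K)$ has quasi-inverse $\Phi_{(\ke_{\rm L})_K}=\Phi_{(\ke_{\rm R})_K}$, so it is an equivalence.

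The main obstacle, and the step worth spelling out, is the assertion that a complex $\kg\in\Db(\kx\times_R\kx')$ with $Lj_0^*\kg\cong 0$ must vanish — this is exactly the place where the completeness of $R$ and the formal analogue of Nakayama's lemma enter. I would argue on cohomology sheaves: if $\kg\not\cong 0$, let $\kh^q(\kg)$ be the top nonzero cohomology; the spectral sequence (or the distinguished triangle relating $\kg$, its truncation, and $\kh^q(\kg)[-q]$) together with the fact that $Lj_0^*$ has bounded Tor-amplitude shows that the top cohomology of $Lj_0^*\kg$ is $\kh^q(\kg)\otim_{R}\CC = \kh^q(\kg)/t\,\kh^q(\kg)$, which is nonzero by Nakayama for the coherent $\ko_\kx$-module $\kh^q(\kg)$ (coherent sheaves on $\kx$ are inverse systems, $t$-adically complete), contradicting $Lj_0^*\kg\cong 0$. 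Everything else — existence and $R$-linearity of the derived functors, perfectness of all complexes, compatibility of trace maps with pull-back, the explicit adjoint kernels, and the behaviour of adjunctions under an exact localization functor — is either recorded in Proposition \ref{prop:recallderfunc}, Remark \ref{rem:notperfect}, Lemma \ref{cor:AJL}, or is formal; so the corollary follows by transporting the equivalence $\Phi_\ke\simeq$(equivalence) along $Q$.
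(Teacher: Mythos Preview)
Your argument is correct and follows essentially the same route as the paper: invoke Proposition \ref{prop:extequiv} with $n=\infty$ to obtain that $\Phi_\ke:\Db(\kx)\to\Db(\kx')$ is an equivalence, then descend the inverse to the Verdier quotient via the $R$-linearity discussed in Section \ref{sect:derfunc}. One small slip: the cone $\kg$ of ${\rm tr}_\kx$ lives on $\kx\times_R\kx$, so its restriction is to $X\times X$, not $X\times X'$; and your elaboration of the Nakayama step, while fine, is already absorbed into the statement ``$E\in\Db(\kx)$ is trivial if $E_0\cong 0$'' recorded at the start of Section \ref{subsect:Homs}.
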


\begin{proof} Indeed the inverse Fourier--Mukai functor
$\Phi_\kf:=\Phi_{\ke}^{-1}:\Db(\kx')\to\Db(\kx)$, which exists due
to  Proposition \ref{prop:extequiv}, descends to a Fourier--Mukai
transform (see Section \ref{sect:derfunc})
$$\Phi_{\kf_K}:\Db(\kx'_K)\to\Db(\kx_K),$$
which clearly is an inverse to $\Phi_{\ke_K}$.
\end{proof}

\section{Properties of the derived category of the general
fibre}\label{sect:Der}

In this section we conclude the proof of Theorem \ref{thm:main}. However, for most of the results it is enough to assume that $X$ is a smooth projective variety. More precisely, the assumption that $X$ is a surface with trivial canonical bundle is needed for the first time in Proposition \ref{prop:derallsame}.

In particular, we prove that $\Db(\kx_K)$ is indeed equivalent to
the derived category of $\coh(\kx_K)$ and we study the Serre
functor of $\Db(\kx_K)$.


\subsection{Comparing Hom-spaces}\label{subsect:Homs}

Let us now consider the pull-back under the closed embedding
$\iota_n:\kx_n\hookrightarrow \kx$ which is a right exact functor
$\iota_{n}^*:\coh(\kx)\to\coh(\kx_n)$ compatible with the
$R$-linear respectively $R_n$-linear structure of the two
categories. Its left derived functor
$$\xymatrix{L\iota_n^*:\Db(\kx)\ar[r]&\Dp(\kx_n)}$$
takes bounded complexes to perfect complexes (see Remark
\ref{rem:notperfect}). When the derived context is clear, we will
often simply write $\iota_n^*$ instead of $L\iota_n^*$. For
$E\in\Db(\kx)$ one writes
$$E_n:=\iota_n^*E=L\iota_n^*E\in \Db(\kx_n).$$ In particular, $E_0$
denotes the restriction of a complex $E$ on $\kx$ to the special
fibre $X$. Clearly, $E\in\Db(\kx)$ is trivial if $E_0\cong0$.

One needs to be careful with the pull-back under
$i_n:\kx_n\hookrightarrow\kx_{n+1}$, whose left derived functor
$i_n^*:\Dp(\kx_{n+1})\to\Dp(\kx_n)$ is well-defined for perfect
complexes but not for bounded ones (see Remark
\ref{rem:notperfect}).

\begin{lem}\label{lem:restr}
{\rm i)} For $E,E'\in\Db(\kx)$ there exists a functorial
isomorphism
$$R\Hom_{\Db(\kx)}(E,E')\otimes^{L}_RR_n\congpf
R\Hom_{\Dp(\kx_n)}(L\iota_n^*E,L\iota_n^*E').$$

\noindent{\rm ii)} For $m<n$ and $E,E'\in\Dp(\kx_n)$ there exists
a functorial isomorphism
$$R\Hom_{\Dp(\kx_n)}(E,E')\otimes^{L}_{R_n}R_m\congpf
R\Hom_{\Dp(\kx_m)}(Li_{m,n}^*E,Li_{m,n}^*E').$$\end{lem}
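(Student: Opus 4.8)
The plan is to reduce both statements to a local, affine computation using the base-change formula for the derived tensor product, and then to globalize. First I would prove part (i). Since all complexes in $\Db(\kx)$ are perfect (Remark \ref{rem:notperfect}), I may replace $E$ locally by a bounded complex of finite free $\ko_\kx$-modules. The key observation is that $L\iota_n^*$ is, up to quasi-isomorphism, just $(-)\lotimes_{\ko_\kx}\ko_{\kx_n}$, and since $\ko_{\kx_n}=\ko_\kx\lotimes_R R_n$ (flatness of $\kx$ over $R$), pulling back a complex along $\iota_n$ is the same as derived tensoring the underlying $R$-module complex with $R_n$. For perfect $E$, one has $R\sHom_{\kx_n}(L\iota_n^*E,L\iota_n^*E')\cong L\iota_n^*R\sHom_\kx(E,E')$ by compatibility of dualizing with base change for perfect complexes. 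Applying $R\pi_*$ (which commutes with the flat-over-$R$ base change $R\to R_n$ by flat base change, Proposition \ref{prop:recallderfunc} and the remarks following it) and using that $R\Hom_{\Db(\kx)}(E,E')=R\Gamma(\kx,R\sHom_\kx(E,E'))$ while the analogous statement holds on $\kx_n$, yields
$$
R\Hom_{\Dp(\kx_n)}(L\iota_n^*E,L\iota_n^*E')\cong R\Hom_{\Db(\kx)}(E,E')\lotimes_R R_n,
$$
where the base-change isomorphism $R\Gamma(\kx,-)\lotimes_R R_n\cong R\Gamma(\kx_n,L\iota_n^*(-))$ is exactly the properness/flatness statement one gets from cohomology and base change (again using that everything is perfect so that the complexes involved are pseudo-coherent and the pushforwards are bounded with coherent cohomology). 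Functoriality is clear from the construction.

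For part (ii) I would argue the same way, working on $\kx_n$ in place of $\kx$ and on $\kx_m$ in place of $\kx_n$, with $R_n\to R_m$ in place of $R\to R_n$. The one genuine difference is that $i_{m,n}$ is a closed embedding of \emph{singular} (non-reduced) formal schemes and $Li_{m,n}^*$ is only defined on the perfect subcategory $\Dp(\kx_n)$, not on all of $\Db(\kx_n)$ — this is why the statement is restricted to $E,E'\in\Dp(\kx_n)$. But for perfect $E,E'$ the compatibility of $R\sHom$ with pullback, $Li_{m,n}^*R\sHom_{\kx_n}(E,E')\cong R\sHom_{\kx_m}(Li_{m,n}^*E,Li_{m,n}^*E')$, still holds (perfectness is what makes the internal Hom commute with $\lotimes$), and $\kx_m$ is flat over $R_m$ with $\ko_{\kx_m}=\ko_{\kx_n}\lotimes_{R_n}R_m$, so the same base-change-plus-pushforward argument goes through verbatim. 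One should note that $\pi_n\colon\kx_n\to\Spec(R_n)$ is proper, so $R(\pi_n)_*$ preserves coherence and the base-change map is an isomorphism.

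The main obstacle is the base-change isomorphism itself: one must be careful that ordinary (derived) flat base change applies, since $R_n$ is not flat over $R$ in the naive sense — but $\Spec(R_n)\to\Spec(R)$ is a closed immersion and the relevant square
$$
\xymatrix{\kx_n\ar[r]\ar[d]&\kx\ar[d]\\ \Spec(R_n)\ar[r]&\Spec(R)}
$$
is cartesian with $\kx$ flat over $R$, so derived base change $L\iota_n^*R\pi_*\cong R(\pi_n)_*L\iota_n^*$ holds for bounded-below complexes with quasi-coherent cohomology; boundedness of the output and coherence follow from properness of $\pi$ together with perfectness of all complexes involved (so that after pulling back one stays in $\Dp(\kx_n)\subset\Db(\kx_n)$). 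Everything else — the interaction of $L\iota_n^*$ with $R\sHom$ for perfect complexes and with $\lotimes$ — is formal, and the identity $\ko_{\kx_n}\cong\ko_\kx\lotimes_R R_n$ is just the definition of flatness of the formal deformation. The identical remarks, with $R_n\rightsquigarrow R_m$, dispose of (ii).
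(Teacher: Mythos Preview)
Your proposal is correct and follows essentially the same route as the paper: both arguments combine the compatibility $L\iota_n^*R\sHom_\kx(E,E')\cong R\sHom_{\kx_n}(L\iota_n^*E,L\iota_n^*E')$ for perfect $E$ with the base-change isomorphism $L\iota_n^*\circ R\Gamma_\kx\cong R\Gamma_{\kx_n}\circ L\iota_n^*$, then use $R\Hom=R\Gamma\circ R\sHom$. The paper cites \cite[Prop.\ 7.1.2]{Ill} and \cite[Cor.\ 2.23]{Ku} for these two ingredients, whereas you justify them directly via perfectness and the cartesian square with $\kx\to\Spf(R)$ flat; your remark that the point is \emph{not} flatness of $R\to R_n$ but rather Tor-independence coming from flatness of $\pi$ is exactly the content of the Kuznetsov reference the paper invokes.
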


\begin{proof}
The proofs of i) and ii) are identical. We just consider the first
case.

Since we continue to  work under the simplifying assumption that
$\kx\to\Spf(R)$ is smooth and proper, the derived local Hom's are
functors $R\kh om_{\kx}:\Db(\kx)^{\rm
op}\times\Db(\kx)\to\Db(\kx)$.  Also, by definition $(-)\otimes
^L_RR_n$ is $L\iota_n^*:\Db(\Spf (R)) \to\Dp(\Spec (R_n))$, the
derived pull-back of the inclusion
$\iota_n:\Spec(R_n)\hookrightarrow\Spec(R)$.

Thus the assumptions of \cite[Prop.\ 7.1.2]{Ill} are satisfied and
we therefore have a functorial isomorphism
\begin{equation}
L\iota_n^*R\kh om_\kx(E,E')\congpf R\kh om_{\kx_n}
(L\iota_n^*E,L\iota_n^*E').
\end{equation}

Applying the global section functor
$R\Gamma_{\kx_n}:=R\Gamma(\kx_n,-):\Db(\kx_n)\to\Db(\Spec (R_n))$
to both sides,  one finds
$$L\iota_n^* R\Gamma_\kx R \kh om_{\kx}(E,E')\stackrel{{\rm (\ast)}}\cong R\Gamma_{\kx_n} L\iota_n^*R\kh om_\kx(E,E')\congpf
R\Gamma_{\kx_n} R\kh om_{\kx_n} (L\iota_n^*E,L\iota_n^*E').$$
Together with $R\Gamma\circ R\kh om=R\Hom$,  this proves the
assertion.

Note that in ($\ast$) we used the base change formula $L\iota_n^*
\circ R\Gamma_\kx\cong R\Gamma_{\kx_n}\circ L\iota_n^*$ which can
be easily proved by adapting the argument of \cite[Sect.\
2.4]{Ku}. More precisely, one could apply Kuznetsov's discussion
to the cartesian triangle given by $\kx_n\hookrightarrow\kx$ over
$\Spec(R_n)\hookrightarrow\Spf(R)$. Corollary 2.23 in \cite{Ku}
shows that from the flatness of $\pi:\kx\to\Spf(R)$ one cannot
only deduce the standard flat base change, but  also the above
assertion (see also \cite[Ch.\ 3, Remark 3.33]{HFM}). For flat
base change in our more general context see \cite{Spalt}.
\end{proof}

The categories $\Db(X)$ and $\Db(\kx_K)$ are $\CC$-linear
respectively $K$-linear triangulated categories with
finite-dimensional Hom-spaces. The following numerical invariants
turn out to be useful and well-behaved. For $E_0,E_0'\in\Db(X)$
one sets:
$$\chi_0(E_0,E'_0):=\sum(-1)^i\dim_\CC\Ext_X^i(E_0,E'_0)$$
and analogously for $E_K,E'_K\in\Db(\kx_K)$:
$$\chi_K(E_K,E'_K):=\sum(-1)^i\dim_K\Ext_K^i(E_K,E'_K).$$

As an immediate consequence of the discussion in Section
\ref{sect:dergen}, one finds the following two results which will be used in \cite{HMS1} to describe spherical and semi-rigid objects in $\Db(\kx_K)$, when $X$ is a smooth projective K3 surface.

\begin{cor}\label{cor:chispecgen}
For any $E,E'\in\Db(\kx)$ one has
$$\chi_0(E_0,E'_0)=\chi_K(E_K,E'_K).$$
\end{cor}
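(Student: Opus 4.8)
The plan is to reduce the equality $\chi_0(E_0,E_0')=\chi_K(E_K,E_K')$ to the single derived base-change isomorphism recorded in Lemma \ref{lem:restr}(i), which compares $R\Hom$ over $\kx$ with $R\Hom$ over the special fibre $X=\kx_0$, together with the $K$-linear formula of Proposition \ref{pro:homgamma}, which compares $R\Hom$ over $\kx$ with $R\Hom$ over the general fibre. Both sides of the claimed identity are ultimately governed by the single object $R\Hom_{\Db(\kx)}(E,E')\in\Db(R\text{-}\mathbf{mod})$, and the point is that its two "specializations'' — tensoring with $\CC=R/(t)$ on the one hand, and with $K=\mathrm{Frac}(R)$ on the other — carry the same Euler characteristic because $R$ is a discrete valuation ring.

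First I would set $C^\bullet:=R\Hom_{\Db(\kx)}(E,E')$, a bounded complex with finitely generated cohomology over the PID $R$; equivalently, by perfectness (Remark \ref{rem:notperfect}), $C^\bullet$ is represented by a bounded complex of finite free $R$-modules. Its Euler characteristic $\chi(C^\bullet):=\sum(-1)^i\,\mathrm{rk}_R H^i(C^\bullet)$ is well-defined. Next, by Lemma \ref{lem:restr}(i) applied with $n=0$ (so $R_0=\CC$) we have $C^\bullet\lotimes_R\CC\cong R\Hom_{\Db(X)}(E_0,E_0')$, whence, since for a perfect complex of $R$-modules the alternating sum of ranks equals the alternating sum of $\CC$-dimensions of the cohomology of $C^\bullet\lotimes_R\CC$, we get $\chi_0(E_0,E_0')=\chi(C^\bullet)$. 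On the other side, tensoring a finite free representative of $C^\bullet$ with the flat $R$-module $K$ gives $C^\bullet\otimes_R K\cong R\Hom_{\Db(\kx_K)}(E_K,E_K')$ by Proposition \ref{pro:homgamma} (flatness of $K$ means the tensor product commutes with cohomology), and the alternating sum of $K$-dimensions of its cohomology is again $\chi(C^\bullet)$. Comparing the two yields $\chi_0(E_0,E_0')=\chi(C^\bullet)=\chi_K(E_K,E_K')$.

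The only point that needs a little care — and is the closest thing to an obstacle — is making sure the two specialization isomorphisms are applied to the correct object and that the ranks really match the cohomology dimensions: for the special fibre this is the standard fact that for a perfect complex $P^\bullet$ of $R$-modules one has $\sum(-1)^i\mathrm{rk}_R P^i=\sum(-1)^i\dim_\CC H^i(P^\bullet\otimes_R\CC)$, which already uses that $\CC$ is the residue field and that the differentials need not be surjective on cohomology; for the general fibre the analogous statement is easier because $K$ is flat, so $H^i(P^\bullet\otimes_R K)=H^i(P^\bullet)\otimes_R K$ and ranks are literally dimensions. Since $R$ is a DVR, finitely generated $R$-modules decompose as free part plus torsion, the torsion dies after $\otimes_R K$ and contributes equal amounts to consecutive $\CC$-dimensions in $\otimes_R\CC$, so it cancels in the alternating sum on both sides — which is precisely why the two Euler characteristics agree. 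Nothing else is required; in particular the hypothesis that $X$ be a surface with trivial canonical bundle is not used here.
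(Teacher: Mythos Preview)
Your proof is correct and follows essentially the same route as the paper: both arguments pivot on the single object $R\Hom_{\Db(\kx)}(E,E')$, invoke Lemma~\ref{lem:restr}(i) to identify its derived reduction mod $t$ with $R\Hom_{\Db(X)}(E_0,E_0')$, invoke Proposition~\ref{pro:homgamma} to identify its localization at $K$ with $R\Hom_{\Db(\kx_K)}(E_K,E_K')$, and then use the free/torsion decomposition over the DVR $R$ to see that both specializations have the same Euler characteristic. Your version is in fact slightly more precise than the paper's, which writes ``$M\otimes^L\CC=0$'' for torsion $M$ when what is really meant (and what you spell out) is that the torsion part contributes zero to the alternating sum.
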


\begin{proof}
By Lemma \ref{lem:restr} i), we have an isomorphism
$R\Hom_{\Db(X)}(E_0,E'_0)\iso R\Hom_{\Db(\kx)}(E,E')\otimes^L\CC$.
Since $R$ is a DVR, we have a decomposition of the $R$-module
\[
R\Hom_{\Db(\kx)}(E,E')\iso
R\Hom_{\Db(\kx)}(E,E')_{\mathrm{free}}\oplus
R\Hom_{\Db(\kx)}(E,E')_{\mathrm{tor}}
\]
in its free and torsion part. Since for a torsion $R$-module $M$ one
has $M\otimes^L\CC=0$, this yields
$$\chi_0(E_0,E_0')=\dim_{\CC} R\Hom_{\Db(X)}(E_0,E'_0)=\dim_{\CC} (R\Hom_{\Db(\kx)}(E,E')_{\mathrm{free}}\otimes\CC).$$
On the other hand, by Proposition \ref{pro:homgamma},
$$\dim_K R\Hom_{\Db(\kx_K)}(E_K,E'_K)=\dim_K(R\Hom_{\Db(\kx)}(E,E')_{\mathrm{free}}\otimes K).$$
This concludes the proof.
\end{proof}

Of course, the single Hom-spaces could be quite different on the
special and on the general fibre, but at least the standard
semi-continuity result can be formulated in our setting.

\begin{cor}\label{cor:semicont}
Let $E,E'\in\Db(\kx)$. Then
$$\dim_\CC\Hom(E_0,E'_0)\geq\dim_K\Hom_K(E_K,E'_K).$$
\end{cor}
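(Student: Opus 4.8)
The plan is to deduce the desired inequality from the semi-continuity of the function $n\mapsto\dim\Hom$ over the artinian rings $R_n$, passing to the limit. First I would invoke Lemma \ref{lem:restr}~i) with $E,E'\in\Db(\kx)$: it provides functorial isomorphisms
$$R\Hom_{\Db(\kx)}(E,E')\otimes^L_RR_n\congpf R\Hom_{\Dp(\kx_n)}(E_n,E'_n).$$
Since $R$ is a discrete valuation ring, write $M:=R\Hom_{\Db(\kx)}(E,E')$ as a bounded complex of finitely generated $R$-modules; its cohomology modules $H^i$ split as $H^i_{\mathrm{free}}\oplus H^i_{\mathrm{tor}}$. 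Taking $n=0$ gives $\Ext^i_X(E_0,E'_0)=H^i(M\otimes^L_R\CC)$, and the universal coefficient / $\mathrm{Tor}$ spectral sequence for the DVR $R$ identifies this with $(H^i_{\mathrm{free}}\otimes_R\CC)\oplus\mathrm{Tor}_1^R(H^{i+1}_{\mathrm{tor}},\CC)$. In particular $\dim_\CC\Hom(E_0,E'_0)=\dim_\CC H^0(M\otimes^L_R\CC)=\rk_R H^0+\ell$, where $\ell=\dim_\CC\mathrm{Tor}_1^R(H^1_{\mathrm{tor}},\CC)\geq 0$ measures the $1$-torsion in $H^1(M)$.

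On the generic-fibre side, Proposition \ref{pro:homgamma} gives $\Hom_K(E_K,E'_K)\cong\Hom_{\Db(\kx)}(E,E')\otimes_RK$, so $\dim_K\Hom_K(E_K,E'_K)=\rk_R H^0(M)$. Comparing the two formulas yields exactly
$$\dim_\CC\Hom(E_0,E'_0)=\rk_R H^0(M)+\ell\;\geq\;\rk_R H^0(M)=\dim_K\Hom_K(E_K,E'_K),$$
which is the claim. So in fact the inequality follows directly, with the ``defect'' $\ell$ equal to the length of the torsion in $H^1$ of the Hom-complex, made visible by derived restriction to the closed point; no genuine limiting argument over all the $R_n$ is even needed, though one could phrase it that way via Lemma \ref{lem:restr}~ii) and the fact that $\dim_{R_n}\Hom_{\Dp(\kx_n)}(E_n,E'_n)$ is non-increasing in $n$ by Nakayama.

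The only point requiring a little care --- and the one I would expect to be the main (minor) obstacle --- is the passage from the derived tensor product $M\otimes^L_R\CC$ back to the plain $\Hom$ group in degree $0$: one must make sure that the contribution to $H^0(M\otimes^L_R\CC)$ coming from torsion sits in $H^1(M)$, not in $H^0(M)$, so that it genuinely \emph{adds} to $\rk_R H^0$ rather than competing with it. Since $R$ has global dimension one, the hyper-$\mathrm{Tor}$ spectral sequence degenerates into the short exact sequences $0\to H^i(M)\otimes_R\CC\to H^i(M\otimes^L_R\CC)\to\mathrm{Tor}_1^R(H^{i+1}(M),\CC)\to 0$, which settles this; and $\rk_R H^0(M)=\dim_\CC(H^0(M)\otimes_R\CC_{\mathrm{free}})\leq\dim_\CC(H^0(M)\otimes_R\CC)$ together with the left-exact term of this sequence closes the argument. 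Everything else is bookkeeping with the DVR structure already used in the proof of Corollary \ref{cor:chispecgen}.
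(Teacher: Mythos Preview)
Your argument is correct and follows essentially the same approach as the paper: the paper's proof simply records that $\dim_K\Hom_K(E_K,E'_K)=\rk_R\Hom_{\Db(\kx)}(E,E')_{\rm free}$ (by Proposition~\ref{pro:homgamma}) and then invokes Lemma~\ref{lem:restr} without spelling out the universal-coefficient computation you carry out. One small slip: the equality $\dim_\CC\Hom(E_0,E'_0)=\rk_R H^0+\ell$ with $\ell=\dim_\CC\mathrm{Tor}_1^R(H^1_{\rm tor},\CC)$ omits a possible contribution from torsion in $H^0(M)$ itself (which would make the left side even larger), but you correctly catch and repair this in your final paragraph, so the inequality stands.
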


\begin{proof}
We know that
$$\dim_K \Hom_{\Db (\kx_K)} (E_K, E'_K) = \rk_R \Hom_{\Db (\kx)} (E,E')_{\rm free}.$$
The conclusion follows from Lemma \ref{lem:restr}.
\end{proof}

\subsection{Serre functors}\label{subsec:Serre}
The relative canonical bundles
$\omega_{\pi_n}:=\omega_{\kx_n/R_n}$ of $\pi_n:\kx_n\to\Spec(R_n)$
define a coherent sheaf $\omega_\pi$ on $\kx$, the
\emph{dualizing} or \emph{canonical line bundle}. The name is
justified by the following observation (for more general
statements see \cite{AJL,Yek}):

\begin{lem}\label{cor:AJL}
Suppose $\pi:\kx\to\Spf(R)$ is a smooth proper formal scheme of
relative dimension $d$. Then there are functorial isomorphisms
$$R\Hom_{\Db(\kx)}(E,\omega_\pi[d])\congpf R\Hom_{\Db(\Spf(R))}(R\Gamma_\kx E,R),$$
for all $E\in\Db(\kx)$.
\end{lem}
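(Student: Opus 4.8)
The plan is to deduce this relative duality statement from the already--cited absolute version on each finite level and then pass to the limit, or alternatively to invoke Grothendieck--Verdier duality directly in the formal setting. Concretely, I would first recall that $\pi_n:\kx_n\to\Spec(R_n)$ is smooth and proper of relative dimension $d$, so the relative dualizing complex is $\omega_{\pi_n}[d]$ with $\omega_{\pi_n}$ the relative canonical \emph{line bundle}; here we use that smoothness forces the dualizing complex to be a shifted line bundle rather than a genuine complex. Grothendieck duality for the proper morphism $\pi_n$ then gives a functorial isomorphism
\[
R\Hom_{\Db(\kx_n)}(E_n,\omega_{\pi_n}[d])\cong R\Hom_{\Db(\Spec(R_n))}(R\pi_{n*}E_n,R_n)
\]
for $E_n\in\Dp(\kx_n)$, where $R\pi_{n*}=R\Gamma_{\kx_n}$ since the base is affine. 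This is exactly \cite[Ch.~7]{Ill} applied to $\pi_n$, using that every bounded complex with coherent cohomology on the smooth $\kx_n$-side that we care about is the restriction of a perfect complex on $\kx$.

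The second step is to check compatibility of these level-$n$ dualities with the transition maps, i.e.\ that the isomorphisms for varying $n$ are compatible under $L\iota_n^*$ (equivalently $(-)\otimes^L_R R_n$). This is where Lemma \ref{lem:restr} does the work: it identifies $R\Hom_{\Dp(\kx_n)}(E_n,E_n')$ with $R\Hom_{\Db(\kx)}(E,E')\otimes^L_R R_n$, and the base-change formula $L\iota_n^*R\Gamma_\kx\cong R\Gamma_{\kx_n}L\iota_n^*$ used there (via Kuznetsov's argument) handles the right-hand side. So both sides of the desired formula, after $\otimes^L_R R_n$, reproduce the level-$n$ statement, functorially in $n$. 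Since $\Db(\kx)\cong\Dp(\kx)$ and an object of $\Db(\kx)$ is determined by its system of restrictions $E_n$, and since $R$ is a complete DVR so that an $R$-module of finite type is recovered as $\lim (M\otimes_R R_n)$, one concludes the isomorphism on $\kx$ itself. Alternatively, and perhaps more cleanly, one can apply the relative duality theory of \cite{AJL,Yek} for formal schemes over $\Spf(R)$ directly: those references establish Grothendieck--Verdier duality with dualizing complex $\omega_\pi[d]$ in precisely this formal context, and the present statement is the special case of a smooth proper $\pi$, for which the dualizing complex collapses to a shifted line bundle by the smoothness already assumed throughout.

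The main obstacle is the usual one in passing between the derived categories $\Db(\coh(\kx))$, $\Db(\kx)=\Db_{\rm coh}(\Mod\kx)$, and the perfect-complex categories on the $\kx_n$: the functor $\Db(\coh(\kx))\to\Db(\kx)$ is not an equivalence, and pullback along $\kx_n\hookrightarrow\kx_{n+1}$ does not preserve bounded complexes. One must therefore be careful to phrase everything on the $\kx$-side (where $\Db(\kx)\cong\Dp(\kx)$, Remark \ref{rem:notperfect}) and only restrict to finite levels through $L\iota_n^*:\Db(\kx)\to\Dp(\kx_n)$, never trying to build objects of $\Db(\kx)$ from incompatible bounded complexes on the $\kx_n$. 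Once the statement is organized so that the limit is taken over the well-behaved restriction functors $\iota_n^*$ rather than the problematic $i_n^*$, the finite-level duality plus the $R$-completeness and Lemma \ref{lem:restr} assemble into the claim with only routine bookkeeping. I would also note in passing that functoriality in $E$ is automatic from the functoriality of Grothendieck duality at each level together with the functoriality of the base-change isomorphisms.
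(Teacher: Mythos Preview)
Your overall strategy coincides with the paper's: establish the duality on each finite thickening $\kx_n$ via Grothendieck duality for the smooth proper morphism $\pi_n$, use Lemma \ref{lem:restr} and the base change $L\iota_n^*\circ R\Gamma_\kx\cong R\Gamma_{\kx_n}\circ L\iota_n^*$ to identify both sides after $\otimes^L_R R_n$, check that the resulting isomorphisms $f_n$ are compatible as $n$ varies, and then pass to the limit. The alternative of invoking \cite{AJL,Yek} directly is legitimate as well; the paper mentions these references but prefers the explicit argument.

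Where your write-up is thin is precisely the limit step, which you call ``routine bookkeeping''. The obstacle you discuss at length---the non-equivalence of $\Db(\coh(\kx))$ and $\Db(\kx)$, and the bad behaviour of $i_n^*$---is not really the issue here, since everything lives in $\Db(\kx)\cong\Dp(\kx)$ and only $L\iota_n^*$ is used. The genuine subtlety is that the compatibilities $\bar f_{n+1}:=f_{n+1}\otimes^L_{R_{n+1}}\id_{R_n}=f_n$ are equalities in $\Db(R_n\text{-}\cat{Mod})$, not of morphisms of complexes, so you do not yet have a projective system whose limit you can take. The paper handles this explicitly: replace $K^\bullet:=R\Hom_{\Db(\kx)}(E,\omega_\pi[d])$ and $L^\bullet:=R\Hom_{\Db(\Spf(R))}(R\Gamma_\kx E,R)$ by bounded complexes of free $R$-modules, so that the $f_n$ may be realized as honest chain maps; then for each $n$ the discrepancy $f_n-\bar f_{n+1}$ is null-homotopic via some $k^i:K^i\otimes R_n\to L^{i-1}\otimes R_n$, and by projectivity one lifts $k^i$ to $h^i$ over $R_{n+1}$ and replaces $f_{n+1}$ by the homotopic map $f_{n+1}+h\,d_K+d_L\,h$, achieving $\bar f_{n+1}=f_n$ on the nose. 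Iterating produces an actual projective system of chain maps whose limit gives the desired isomorphism $K^\bullet\congpf L^\bullet$. Your sentence ``an $R$-module of finite type is recovered as $\lim(M\otimes_R R_n)$'' is true but does not by itself address this derived-category coherence problem; you should either supply this homotopy-rectification argument or cite the formal duality theorem from \cite{AJL,Yek} outright.
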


\begin{proof}
Notice that $\omega_{\pi_n}$ is the dualizing complex in
$\Dp(\kx_n)$, that is
$$R\Hom_{\Dp(\kx_n)}(E_n,\omega_{\pi_n}[d])\congpf
R\Hom_{\Dp(R_n)}(R\Gamma_{\kx_n}E_n,R_n),$$ for any
$E_n\in\Dp(\kx_n)$.

For any positive integer $n$, we have the following natural
isomorphisms (using Lemma \ref{lem:restr} twice)
\begin{equation*}
\begin{split}
R\Hom_{\Db(\kx)}(E,\omega_\pi[d])\otimes^L_R R_n & \iso R\Hom_{\Dp(\kx_n)}(L\iota_n^*E,\omega_{\pi_n}[d])\\
                                                                                  & \iso R\Hom_{\Dp(R_n)}(R\Gamma_{\kx_n}L\iota_n^*E,R_n)\\
                                                                                  & \iso R\Hom_{\Db(\Spf(R))}(R\Gamma_\kx E,R)\otimes^L_R R_n.
\end{split}
\end{equation*}
(Notice that the last isomorphism uses again $L\iota_n^*\circ
R\Gamma_\kx\cong R\Gamma_{\kx_n}\circ L\iota_n^*$ as in the proof
of Lemma \ref{lem:restr}.)
 Moreover, the resulting isomorphisms
$$f_n:\Hom_{\Db(\kx)}(E,\omega_\pi[d])\otimes^L_R R_n \congpf
R\Hom_{\Db(\Spf(R))}(R\Gamma_\kx E,R)\otimes^L_R R_n$$ are
compatible under pull-back, i.e.\ $\bar
f_{n+1}:=f_{n+1}\otimes^L_R{\rm id}_{R_n}=f_n$.

Taking the projective limits allows us to conclude the proof. More
precisely, one uses the following general argument: Suppose we are
given complexes $K^\bullet,L^\bullet\in\Db(R\text{-}\cat{Mod})$
and isomorphisms $f_n:K^\bullet\otimes^L_RR_n\congpf
L^\bullet\otimes^L_RR_n$ in $\Db(R_n\text{-}\cat{Mod})$ compatible
in the above sense. Replacing $K^\bullet$ and $L^\bullet$ by
complexes of free $R$-modules, we can assume that the $f_n$ are
morphisms of complexes. Again using the projectivity of the
modules $K^i$ and $L^i$, we deduce from the compatibility of $f_n$
and $f_{n+1}$ the existence of a homotopy $k^i:K^i\otimes R_n\to
L^{i-1}\otimes R_n$ between $f_n$ and $\bar f_{n+1}$, i.e.\
$f^i_n-\bar f^i_{n+1}=k^{i+1} d^i_K+d^{i-1}_L k^i$. Lift $k^i$ to
$h^i:K^i\otimes R_{n+1}\to L^i\otimes R_{n+1}$ and replace
$f_{n+1}$ by the homotopic one $f_{n+1}+hd_K+d_Lh$. With this new
definition one has $f_n=\bar f_{n+1}$ as morphism of complexes
homotopic to he original one. Continuing in this way, one obtains
a projective system of morphisms of complexes. The limit is then
well defined and yields an isomorphism $K^\bullet\congpf
L^\bullet$.

The functoriality of the constructions is straightforward.
\end{proof}

We are now ready to show that the derived category of the general
fibre, which is a $K$-linear category, has a Serre functor in the
usual sense. The following proposition, saying that Serre duality
holds true in $\Db(\kx_K)$, shows the advantage of working with
$\Db(\kx_K)$. The \emph{canonical bundle of the general fibre} is
by definition $\omega_{\kx_K}:=(\omega_\pi)_K\in\coh(\kx_K)$.

\begin{prop}\label{pro:serrefunctor}
Suppose $\pi:\kx\to\Spf(R)$ is a smooth proper formal scheme of
relative dimension $d$. Then the functor $E\mapsto
E\otimes\omega_{\kx_K}[d]$ is a Serre functor for the $K$-linear
category $\Db(\kx_K)$, i.e.\ there are natural isomorphisms
$$\Hom_{\Db(\kx_K)} (E_K, E'_K) \congpf \left(\Hom_{\Db(\kx_K)} (E'_K,E_K\otimes\omega_{\kx_K}[d])\right)^*,$$
for all $E_K,E'_K\in\Db(\kx_K)$, where $(-)^*$ denotes the dual
$K$-vector space.
\end{prop}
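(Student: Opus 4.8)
The plan is to reduce the statement about $\Db(\kx_K)$ to the already–established relative duality on $\kx$, namely Lemma \ref{cor:AJL}, by tensoring everything with $K$. First I would lift $E_K,E'_K$ to objects $E,E'\in\Db(\kx)$ with $E_K\cong E_K$, $E'_K\cong E'_K$ (this is possible since the two categories have the same objects). By Proposition \ref{pro:homgamma} we have $\Hom_{\Db(\kx_K)}(E_K,E'_K)\cong\Hom_{\Db(\kx)}(E,E')\otimes_R K$, and similarly for the other Hom-space, so it suffices to produce a natural isomorphism
$$\bigl(\Hom_{\Db(\kx)}(E,E')\bigr)\otimes_R K\;\congpf\;\Bigl(\bigl(\Hom_{\Db(\kx)}(E',E\otimes\omega_\pi[d])\bigr)\otimes_R K\Bigr)^{*},$$
where on the right $(-)^{*}$ is the $K$-dual. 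Here I would use that for a finitely generated $R$-module $M$, the $K$-vector space $M\otimes_R K$ depends only on the free part of $M$, and that $(M\otimes_R K)^{*}\cong M^{\vee}\otimes_R K$, where $M^{\vee}=\Hom_R(M,R)$; more precisely the torsion of $M$ dies after $\otimes_R K$, and for a free module $R^{r}$ one has $(R^{r}\otimes_R K)^{*}\cong R^{r}\otimes_R K$ canonically via the $R$-bilinear perfect pairing.

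The key step is then to apply Lemma \ref{cor:AJL}: using $R\Gamma\circ R\kh om=R\Hom$ and writing $\omega_{\kx_K}=(\omega_\pi)_K$, we get a functorial isomorphism in $\Db(R\text{-}\mathbf{mod})$
$$R\Hom_{\Db(\kx)}(E,E'\otimes\omega_\pi[d])\;\congpf\;R\Hom_{\Db(\Spf(R))}\bigl(R\Gamma_\kx(R\kh om_\kx(E,E')),R\bigr),$$
obtained by applying Lemma \ref{cor:AJL} to the object $R\kh om_\kx(E,E')\otimes\omega_\pi^{-1}$, or equivalently by the standard manipulation $R\Hom(E,E'\otimes\omega_\pi[d])\cong R\Hom(E\otimes E'^{\vee},\omega_\pi[d])\cong R\Hom_R(R\Gamma(E\otimes E'^{\vee}),R)$; note $E\otimes E'^{\vee}$ makes sense since all complexes on the smooth formal scheme are perfect (Remark \ref{rem:notperfect}). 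Taking $H^{0}$ of both sides and then $\otimes_R K$, the right-hand side becomes $\bigl(H^{0}R\Gamma_\kx R\kh om_\kx(E',E)\otimes_R K\bigr)^{*}=\Hom_{\Db(\kx_K)}(E'_K,E_K)^{*}$ up to the usual bookkeeping of $\Ext$-degrees, and replacing $E$ by $E\otimes\omega_\pi[d]$ throughout gives exactly the desired pairing. The naturality in $E_K,E'_K$ follows from the functoriality asserted in Lemma \ref{cor:AJL} together with the functoriality of $\eta_K$ in Proposition \ref{pro:homgamma}.

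The main obstacle is the passage from a derived/relative-duality statement at the level of $R\Hom$-complexes (Lemma \ref{cor:AJL}, which involves a $\Db(R\text{-}\mathbf{mod})$-valued duality) to an honest Serre-duality pairing of finite-dimensional $K$-vector spaces in each cohomological degree. Over the DVR $R$ the complex $R\Hom_{\Db(\kx)}(E,E')$ has finitely generated cohomology, so after $\otimes_R K$ it becomes a bounded complex of finite-dimensional $K$-spaces with zero differentials on cohomology; the subtlety is to check that the duality isomorphism of Lemma \ref{cor:AJL}, which a priori lives in the derived category $\Db(R\text{-}\mathbf{mod})$ and involves $R\Hom_R(-,R)$ (hence possible $\Ext^{1}_R$ contributions from torsion), restricts after $\otimes_R K$ to a term-by-term $K$-linear perfect pairing $\Ext^{i}_K(E_K,E'_K)\times\Ext^{d-i}_K(E'_K,E_K\otimes\omega_{\kx_K})\to K$. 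This is where one uses crucially that $R$ is a DVR — so $\Ext^{1}_R(\text{torsion},R)$ is torsion and vanishes after $\otimes_R K$, while $\Hom_R(R^{r},R)\otimes_R K$ is canonically $K$-dual to $R^{r}\otimes_R K$ — together with the compatibility of the isomorphisms $f_n$ in the proof of Lemma \ref{cor:AJL} under the projective limit, which guarantees that the pairing is defined over $R$ before inverting $t$. Everything else (functoriality, compatibility with shifts and with $\otimes\omega_{\kx_K}$) is formal.
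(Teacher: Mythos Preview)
Your proposal is correct and follows essentially the same strategy as the paper: lift to $\Db(\kx)$, invoke relative duality (Lemma \ref{cor:AJL}) together with Proposition \ref{pro:homgamma}, use perfectness of complexes to rewrite $R\Hom$ via $R\kh om$, and exploit that $R$ is a DVR so that the $\Ext^1_R$-contributions from torsion vanish after $\otimes_R K$. The paper, following \cite{BB}, organizes this as two steps (first the special case $\Hom_K(E_K,\omega_{\kx_K}[d])\cong\Hom_K(\ko_{\kx_K},E_K)^*$, then the reduction via $R\kh om_\kx(E',E\otimes\omega_\pi)\cong R\kh om_\kx(R\kh om_\kx(E,E'),\omega_\pi)$), but the content is the same; note only that in your displayed isomorphism the arguments of $R\kh om_\kx$ should be $(E',E)$ rather than $(E,E')$, consistent with your own computation $E\otimes E'^\vee\cong R\kh om_\kx(E',E)$.
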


\begin{proof}
We follow the proof of \cite[Prop.\ 5.1.1]{BB}. Let $E\in\Db(\kx)$
and let $E_K\in\Db(\kx_K)$ be its image  under the natural
projection. We have
\begin{equation*}
\begin{split}
\Hom_{\Db(\kx_K)} (E_K,\omega_{\kx_K}[d]) & \iso\Hom_{\Db (\kx)} (E,\omega_\pi[d])\otimes_R K \\
                                                  & \iso\Hom_{\Db(\Spf(R))}(R\Gamma_\kx E,R)\otimes_R K \\
                                                 & \iso\Hom_R (\oplus_s R^s\Gamma_\kx E[-s], R)\otimes_R K \\
                                                 & \iso\Hom_R (R^0\Gamma_\kx E,R)\otimes_R K\\
                                                 & \iso\Hom_R (\Hom_{\Db (\kx)} (\ko_\kx,E), R)\otimes_R K \\
&\iso(\Hom_{\Db(\kx)}(\ko_\kx,E)\otimes_RK))^*\\
                                                 & \iso(\Hom_{\Db (\kx_K)} (\ko_{\kx_K},E_K))^*,
\end{split}
\end{equation*}
where the first and the last isomorphisms follow from Proposition
\ref{pro:homgamma}, while the second is Lemma \ref{cor:AJL},
and all the others are simple consequences of the fact that $R$ is
a DVR. Dualizing (with respect to $K$) we have
$$\Hom_{\Db(\kx_K)} (\ko_{\kx_K}, E_K) \iso (\Hom_{\Db(\kx_K)} (E_K, \omega_{\kx_K} [d]))^*.$$
Now, let $E,E'\in \Db (\kx)$ and let $E_K, E'_K\in\Db(\kx_K)$ be
their images. Since $E$ and $E'$ are perfect complexes, the
natural map
$$R\kh om_\kx(E',E\otimes\omega_{\pi})\to R\kh om_\kx(R \kh om_\kx(E,E'),\omega_\pi)$$
is an isomorphism. Indeed the statement is local and we can assume
$E$ and $E'$ be bounded complexes of free sheaves. In that case
the claim is obvious.

Then one concludes by
\begin{equation*}
\begin{split}
\Hom_{\Db(\kx_K)} (E_K,E'_K) & \iso \Hom_{\Db (\kx)} (E,E') \otimes_R K \\
                                                                                  & \iso \Hom_{\Db(\kx)} (\ko_\kx,R\kh om_\kx(E,E')) \otimes_R K \\
                                                                                  & \iso (\Hom_{\Db(\kx)} (R\kh om_\kx(E,E'), \omega_\pi [d]) \otimes_R K)^* \\
                                                                                  & \iso (\Hom_{\Db(\kx)} (\ko_\kx,R\kh om_\kx(R\kh om_\kx(E,E'), \omega_\pi [d])) \otimes_R K)^* \\
                                                                                  & \iso (\Hom_{\Db(\kx)} (\ko_\kx,R\kh om_\kx(E',E\otimes\omega_\pi [d])) \otimes_R K)^* \\
                                                                                  & \iso (\Hom_{\Db(\kx)} (E',E\otimes\omega_\pi [d]) \otimes_R K)^* \\
                                                                                  & \iso (\Hom_{\Db(\kx_K)} (E'_K,E_K\otimes\omega_{\kx_K} [d]))^*,
\end{split}
\end{equation*}
where the first and the last isomorphisms follow from Proposition \ref{pro:homgamma} while the third is Lemma \ref{cor:AJL}. The functoriality is clear.
\end{proof}

If $X$ is a smooth projective surface with trivial canonical bundle, then $\omega_\pi$ is trivial
and thus $\omega_{\kx_K}$ is trivial as well. Therefore, in this
case, the Serre functor of $\Db(\kx_K)$ is isomorphic to the
square of the shift functor and $\Db(\kx_K)$ is thus a K3 (or
$2$-Calabi--Yau) category as claimed in Theorem \ref{thm:main}.

\subsection{A technical result}\label{subsec:technical}
Instead of taking Verdier quotients of derived categories, one
could also consider  derived cate\-gories of Serre quotients of
the underlying abelian categories. Let us start with a few
observations that should hold for the more general situation of
the natural projection $\Db(\kb)\to\Db(\kb/\ka)$ induced by the
quotient of a (non localizing) Serre subcategory $\ka\subset\kb$
of an abelian category $\kb$. We could not find a good reference
for the general case and since the proofs are technically easier,
we restrict to the Serre subcategory $\coh(\kx)_0\subset\coh(\kx)$
with quotient $\coh(\kx_K)$.

The following technical result is probably well-know in other
contexts. It is the first step towards the proof of the
existence of an exact equivalence
$\Db(\kx_K)\cong\Db(\coh(\kx_K))$. We include the proof here for
the convenience of the reader.

\begin{prop}\label{pro:equivalence}
The natural exact functor $Q:\Db(\coh(\kx))\to\Db(\coh(\kx_K))$
induces an exact equivalence
$$\Db(\kx_K^c)=\Db(\coh(\kx))/\Db_0(\coh(\kx))\congpf\Db(\coh(\kx_K)).$$
\end{prop}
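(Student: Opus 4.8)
The plan is to use the general criterion (a version of which is stated as the dual of \cite[Lemma 3.6]{HFM}, already invoked twice in the excerpt) which says that the natural functor $\Db(\kb/\ka)\to\Db(\kb)/\Db_{\ka}(\kb)$ is an equivalence provided a suitable lifting property of morphisms holds. Concretely, since $\coh(\kx)_0\subset\coh(\kx)$ is a Serre subcategory, there is always a natural exact functor $F:\Db(\coh(\kx))/\Db_0(\coh(\kx))\to\Db(\coh(\kx_K))$ compatible with $Q$. To show it is an equivalence, I would verify two things: (1) $F$ is essentially surjective, and (2) $F$ is fully faithful. Essential surjectivity is easy: any object of $\Db(\coh(\kx_K))$ is a bounded complex of objects of $\coh(\kx_K)$, each of which lifts to $\coh(\kx)$ (indeed to an $R$-flat sheaf, as noted after Proposition \ref{prop:Homcohgen}), and one assembles these lifts into a complex in $\Db(\coh(\kx))$ using that morphisms in $\coh(\kx_K)$ are, up to multiplication by a power of $t$, honest morphisms in $\coh(\kx)$ by Proposition \ref{prop:Homcohgen}; the differentials only need to square to zero after passing to the quotient, and one corrects by the usual inductive argument on the length of the complex.

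**The key step: full faithfulness.** The real content is showing that for $E^\bullet, F^\bullet\in\Db(\coh(\kx))$ the map
$$\Hom_{\Db(\kx_K^c)}(E^\bullet_K,F^\bullet_K)\longrightarrow\Hom_{\Db(\coh(\kx_K))}(QE^\bullet,QF^\bullet)$$
is bijective. Here I would exploit that both sides admit a clean description. By Proposition \ref{pro:homgamma} the left-hand side is $\Hom_{\Db(\coh(\kx))}(E^\bullet,F^\bullet)\otimes_R K$. For the right-hand side, the standard calculus of fractions for the Serre quotient $\coh(\kx_K)=\coh(\kx)/\coh(\kx)_0$ realizes a morphism $QE^\bullet\to QF^\bullet$ in $\Db(\coh(\kx_K))$ as a roof $E^\bullet\xleftarrow{s}G^\bullet\to F^\bullet$ in $\Db(\coh(\kx))$ where the cone of $s$ lies in $\Db_0(\coh(\kx))$ — this uses that $\coh(\kx)_0$ is noetherian and the Artin--Rees-type argument already used in Proposition \ref{prop_dertorsion}(i) to handle monomorphisms out of torsion objects, which is exactly what guarantees that morphisms inverted by $Q$ on complexes have torsion cones. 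So the right-hand side is $\varinjlim_s \Hom_{\Db(\coh(\kx))}(G^\bullet,F^\bullet)$ over such $s$, which is precisely a localization of $\Hom_{\Db(\coh(\kx))}(E^\bullet,F^\bullet)$ at the multiplicative system of quasi-isomorphisms-modulo-torsion. The point is then to identify this localization with the tensor product $-\otimes_R K$: any $t^n\cdot\id_{E^\bullet}$ is such an $s$ (its cone $E^\bullet\otimes^L R_{n-1}$ has torsion cohomology), so inverting all such $s$ inverts $t$; conversely one shows every admissible $s$ already becomes invertible after inverting $t$, because the cone of $s$ is a bounded complex with $\coh(\kx)_0$-cohomology and hence is annihilated by some power of $t$.

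**Where the difficulty lies.** The main obstacle is the bookkeeping in identifying the morphism groups in $\Db(\coh(\kx_K))$ with a localization: one must check that the roofs $E^\bullet\xleftarrow{s}G^\bullet\to F^\bullet$ with torsion cone of $s$ form a calculus of fractions computing $\Db(\coh(\kx))\to\Db(\coh(\kx_K))$, i.e. that $\Db(\coh(\kx_K))$ really is the Verdier-type quotient of $\Db(\coh(\kx))$ by the thick subcategory generated by complexes of torsion sheaves together with quasi-isomorphisms modulo torsion. This is where I expect the authors (following their remark that they restrict to $\coh(\kx)_0\subset\coh(\kx)$ precisely because the general statement lacks a good reference) to do the careful work, presumably again via the criterion of \cite[Lemma 3.6]{HFM} applied to both $F$ and to a candidate inverse, reducing everything to the sheaf-level statements: every object of $\coh(\kx_K)$ lifts, and $\Hom_{\coh(\kx_K)}(E_K,F_K)=\Hom_{\coh(\kx)}(E,F)\otimes_R K$ (Proposition \ref{prop:Homcohgen}), together with the Artin--Rees estimate controlling kernels and cokernels of the transition maps. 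Once the morphism spaces on both sides are seen to be $\Hom_{\Db(\coh(\kx))}(E^\bullet,F^\bullet)\otimes_R K$ and $F$ is checked to be the natural map between them, full faithfulness and hence the equivalence follow.
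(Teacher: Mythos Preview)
Your overall architecture matches the paper's: prove essential surjectivity by lifting complexes termwise (this is the paper's Lemma~\ref{Lemma1}, and your sketch is correct), and then prove full faithfulness by showing that both Hom-sets are $\Hom_{\Db(\coh(\kx))}(E^\bullet,F^\bullet)\otimes_R K$. For the Verdier quotient side you correctly invoke Proposition~\ref{pro:homgamma}.

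The gap is in your treatment of the other side. You write that ``the standard calculus of fractions\dots realizes a morphism $QE^\bullet\to QF^\bullet$ in $\Db(\coh(\kx_K))$ as a roof $E^\bullet\xleftarrow{s}G^\bullet\to F^\bullet$ in $\Db(\coh(\kx))$ where the cone of $s$ lies in $\Db_0(\coh(\kx))$,'' and then identify $\Hom_{\Db(\coh(\kx_K))}$ with the colimit over such roofs. But that colimit is \emph{by definition} $\Hom_{\Db(\kx_K^c)}$, so this identification is precisely the proposition you are trying to prove. The Artin--Rees argument from Proposition~\ref{prop_dertorsion}(i) does not supply this; it only compares $\Db(\coh(\kx)_0)$ with $\Db_0(\coh(\kx))$ and says nothing about the quotient.

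What the paper actually does for full faithfulness (Lemma~\ref{Lemma4}) is work directly with roofs in ${\rm K^b}(\coh(\kx_K))$ and lift them to ${\rm K^b}(\coh(\kx))$ using~(\ref{eqn:complK}). The substantive step you are missing is the paper's Lemma~\ref{Lemma2}: if $h$ is a map of complexes in $\coh(\kx)$ such that $Q(h)$ is a quasi-isomorphism, then after pre- and post-composing with maps that become isomorphisms under $Q$, one can replace $h$ by an \emph{honest} quasi-isomorphism $\gamma$ in ${\rm C^b}(\coh(\kx))$. This is what makes the injectivity half of Lemma~\ref{Lemma4} go through: a lifted $h$ only has torsion cone, hence is not invertible in $\Db(\coh(\kx))$, and one needs $\gamma$ to be a genuine quasi-isomorphism in order to cancel it there and conclude $t^Nf=0$ in $\Db(\coh(\kx))$ rather than merely in the quotient. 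Your outline does not anticipate this replacement step, and without it the injectivity argument does not close.
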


For an abelian
category $\cat{A}$, we denote by ${\rm C^b}(\cat{A})$ the abelian
category of bounded complexes of objects in $\cat{A}$ and by ${\rm
K^b}(\cat{A})$ the category of bounded complexes modulo homotopy.

\begin{lem}\label{Lemma1}
The natural projection $Q:\Db(\coh(\kx))\to\Db(\coh(\kx_K))$ is
essentially surjective.
\end{lem}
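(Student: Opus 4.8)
The plan is to lift an arbitrary object of $\Db(\coh(\kx_K))$ to a bounded complex of objects in $\coh(\kx)$, up to quasi-isomorphism. Let $F^\bullet$ be a bounded complex with entries $F^i\in\coh(\kx_K)$. Recall from Section \ref{sect:abgen} that the projection $\coh(\kx)_{\rm f}\to\coh(\kx_K)$ is essentially surjective: each object of $\coh(\kx_K)$ admits an $R$-flat lift in $\coh(\kx)$. So the first step is to choose, for every $i$, a lift $E^i\in\coh(\kx)_{\rm f}$ with $(E^i)_K\cong F^i$. The difficulty is that the differentials $d^i\colon F^i\to F^{i+1}$ are morphisms in the quotient category, so they need not come from morphisms $E^i\to E^{i+1}$ in $\coh(\kx)$ on the nose; only some power $t^{k_i}d^i$ lifts, by Proposition \ref{prop:Homcohgen} (more precisely by the surjectivity of $\eta_K$ established there).

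The second step is to absorb the denominators. Since the complex is bounded, finitely many indices $i$ are involved, so there is a single integer $N$ such that $t^N d^i$ lifts to a morphism $\widetilde{d}{}^i\colon E^i\to E^{i+1}$ in $\coh(\kx)$ for all $i$. Rescaling the differentials by powers of $t$ does not change the isomorphism class of the complex in $\Db(\coh(\kx_K))$, because multiplication by $t$ is invertible there and one can conjugate by the diagonal automorphism with entries $t^{a_i}$ for a suitable choice of exponents $a_i$ (this is the same commutative-triangle bookkeeping used in the proof of Proposition \ref{prop:Homcohgen}). After this rescaling we may assume $(\widetilde{d}{}^i)_K$ equals $d^i$ up to a fixed invertible scalar in $K$.

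The third step is to correct for the failure of $\widetilde{d}{}^{i+1}\circ\widetilde{d}{}^i$ to vanish: a priori this composite is only killed after projecting to $\coh(\kx_K)$, hence it lands in $\coh(\kx)_0$, i.e.\ is annihilated by some power of $t$. Here one again uses that only finitely many indices occur, picks one power $t^M$ annihilating all the composites $\widetilde{d}{}^{i+1}\circ\widetilde{d}{}^i$, and then either passes to the subsheaves $t^M E^i$ (on which the rescaled differentials do compose to zero, using $R$-flatness of the $E^i$ to keep injectivity under multiplication by $t^M$) or argues directly that $E^\bullet$ with these differentials is a genuine complex after one final rescaling. Either way one obtains an honest object $E^\bullet\in{\rm C^b}(\coh(\kx))$, hence a class in $\Db(\coh(\kx))$, whose image under $Q$ is isomorphic to $F^\bullet$ in $\Db(\coh(\kx_K))$.

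I expect the main obstacle to be the third step: making the lifted differentials square to zero simultaneously for all degrees while keeping control of the isomorphism class in the quotient. The $R$-flatness of the chosen lifts (guaranteed by essential surjectivity of $\coh(\kx)_{\rm f}\to\coh(\kx_K)$) is what makes this manageable, since it ensures that passing to $t^M E^i$ or multiplying differentials by $t^M$ does not destroy information that is visible over $K$. The first two steps are routine applications of Proposition \ref{prop:Homcohgen} together with the finiteness coming from boundedness.
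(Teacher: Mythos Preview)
Your proposal is correct and follows essentially the same approach as the paper: lift the objects, clear denominators in the differentials by multiplying by a single power $t^N$, arrange that the lifted composites vanish, and exhibit the isomorphism in $\Db(\coh(\kx_K))$ via a diagonal rescaling $t^{a_i}$. The paper streamlines your steps slightly by not bothering with $R$-flat lifts and by absorbing your step~3 into the same choice of $N$ (since $(t^Nd^{i+1})\circ(t^Nd^i)$ maps to zero in $\Hom_K$, enlarging $N$ kills it in $\coh(\kx)$ by the injectivity part of Proposition~\ref{prop:Homcohgen}), but the argument is the same.
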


\begin{proof}
Let $F^\bullet$ be a bounded complex in the quotient category
${\rm C^b}(\coh(\kx_K))$, i.e.\ $F^i=E^i_K$ for some
$E^i\in\coh(\kx)$ and differentials $d^i\in\Hom_K(F^i,
F^{i+1})=\Hom(E^i, E^{i+1})\otimes K$ (see Proposition
\ref{prop:Homcohgen}).

Suppose $F^i=0$ for $|i|> n$ for some $n>0$. Then there exists
$N\gg0$ such that $t^Nd^i\in\Hom(E^i, E^{i+1})$. Furthermore, we
can choose $N$ large enough such that $(t^Nd^{i+1})\circ(t^Nd^i)$
is trivial in $\coh(\kx)$ for all $i$. Let $\widetilde E^\bullet$
be the complex with objects $\widetilde E^i=E^i$ and differentials
$\tilde d^i:=t^Nd^i$. Then $t^{N(n-i)}:\widetilde E^i_K\congpf
F^i$ defines an isomorphism of complexes $Q(\widetilde
E^\bullet)\congpf F^\bullet$.
\end{proof}

Thus, in particular, in order to prove Proposition
\ref{pro:equivalence}, i.e.\ that the natural functor induces an
equivalence $\Db(\kx_K^c)\congpf\Db(\coh(\kx_K)),$ it remains to
show $\Hom_{\Db(\kx_K^c)}\cong\Hom_{\Db(\coh(\kx_K))}$. By
Proposition \ref{pro:homgamma} we already know that
$\Hom_{\Db(\kx_K^c)}\cong\Hom_{\Db(\coh(\kx))}\otimes K$. Thus, we
just need to show that $\Db(\kx_K^c)\to\Db(\coh(\kx_K))$  induces
as well isomorphisms $\Hom_{\Db(\coh(\kx))}\otimes
K\cong\Hom_{\Db(\coh(\kx_K))}$. This will be the content of Lemma
\ref{Lemma4}.

In the following we shall frequently use the much easier fact that
\begin{equation}\label{eqn:complK}
\Hom_{{\rm C^b}(\coh(\kx))}(E_1^\bullet,E_2^\bullet)\otimes
K\cong\Hom_{{\rm
C^b}(\coh(\kx_K))}(Q(E_1^\bullet),Q(E_2^\bullet)),
\end{equation}
which is proved by the same argument as Proposition
\ref{prop:Homcohgen}. One only has to observe in addition that in
order to lift a morphism of complexes $f^\bullet:Q(E_1^\bullet)\to
Q(E_2^\bullet)$, one first lifts all $t^nf^i$ to $\tilde
f^i:E_1^i\to E_2^i$ for some $n\gg0$ and to make $\tilde
f^\bullet$ a map of complexes on $\kx$ one might have to
annihilate kernel and cokernel of $d_{E_2^\bullet}^i\circ \tilde
f^i-\tilde f^{i+1}\circ d_{E_1^\bullet}^i$ by multiplying with yet
another high power of $t$.

\begin{lem}\label{Lemma2}
Let $E_1^\bullet, E_2^\bullet\in{\rm C^b}(\coh (\kx))$ and let $h
\in \Hom_{{\rm C^b}(\coh (\kx))} (E_1^\bullet,E_2^\bullet)$ be
such that $Q (h)$ is a quasi-isomorphism in ${\rm C^b}(\coh
(\kx_K))$. Then there exist two complexes
$F_1^\bullet,F_2^\bullet$ and two morphisms $f_1:F_1^\bullet\to
E_1^\bullet$, $f_2:F_2^\bullet\to E_2^\bullet$ in ${\rm C^b}(\coh
(\kx))$ such that $Q (f_1)$ and $Q(f_2)$ are isomorphisms in ${\rm
C^b}(\coh (\kx_K))$ and $Q(f_2)^{-1}\circ Q(h)\circ
Q(f_1)=Q(\gamma)$, with $\gamma$ a quasi-isomorphism in ${\rm
C^b}(\coh(\kx))$.
\end{lem}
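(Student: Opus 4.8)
The plan is to promote the given map $h$ to a genuine quasi-isomorphism of complexes on $\kx$ after correcting by maps that become isomorphisms in $\coh(\kx_K)$, and the main obstacle will be that a quasi-isomorphism after applying $Q$ need \emph{not} be one before: the cohomology sheaves of the mapping cone $C^\bullet:=\mathrm{Cone}(h)$ are only $R$-torsion, not zero. So the strategy is to kill this torsion by building resolutions out of $R$-flat sheaves.

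First I would use the torsion theory $(\coh(\kx)_0,\coh(\kx)_{\rm f})$ from Section \ref{sect:abgen}: since $\coh(\kx)_{\rm f}\to\coh(\kx_K)$ is essentially surjective, and more to the point since every object of $\coh(\kx_K)$ is the image of an $R$-flat sheaf, I can replace $E_1^\bullet$ and $E_2^\bullet$ term by term by complexes of $R$-flat sheaves. Concretely, by the argument of Lemma \ref{Lemma1} (which produces, for any complex over $\coh(\kx_K)$, a lift to a complex over $\coh(\kx)$) applied to $Q(E_i^\bullet)$ together with the $R$-flatness of the lift $E_{{\rm f}}$ of any $E_K$, I would construct $F_i^\bullet\in{\rm C^b}(\coh(\kx))$ with all terms $R$-flat, together with maps $f_i:F_i^\bullet\to E_i^\bullet$ whose images $Q(f_i)$ are isomorphisms of complexes in ${\rm C^b}(\coh(\kx_K))$ — indeed one can take $Q(f_i)$ to be the canonical isomorphism $Q(F_i^\bullet)\cong Q(E_i^\bullet)$ given by the torsion sequence termwise. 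Using \eqref{eqn:complK}, the composite $Q(f_2)^{-1}\circ Q(h)\circ Q(f_1)\colon Q(F_1^\bullet)\to Q(F_2^\bullet)$ lifts, after multiplying by a suitable power of $t$ (which is invertible in $\coh(\kx_K)$, hence harmless), to an honest morphism of complexes $\gamma\colon F_1^\bullet\to F_2^\bullet$ in ${\rm C^b}(\coh(\kx))$ with $Q(\gamma)=Q(f_2)^{-1}\circ Q(h)\circ Q(f_1)$.

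It then remains to see that $\gamma$ can be chosen to be a quasi-isomorphism in ${\rm C^b}(\coh(\kx))$, not merely after applying $Q$. This is where I expect the real work: a priori the cone of $\gamma$ has $R$-torsion cohomology sheaves $\kh^j$ with $t^{N_j}\kh^j=0$ for suitable $N_j$. I would argue that because the $F_i^\bullet$ consist of $R$-flat sheaves, the cone $\mathrm{Cone}(\gamma)$ is a bounded complex of $R$-flat sheaves whose cohomology is $R$-torsion; restricting to the closed fibre, $\mathrm{Cone}(\gamma)\otimes^L_R\CC$ has cohomology supported in each degree on the torsion, and an argument with the long exact sequence for $t\colon \mathrm{Cone}(\gamma)\to\mathrm{Cone}(\gamma)$ — or equivalently, with $\mathrm{Cone}(\gamma)\otimes^L_R R_n$ — forces the torsion to vanish once one has corrected $\gamma$ by a further multiplication by $t$ and a homotopy, using that $Q(\gamma)$ is already a quasi-isomorphism so the torsion is ``all there is''. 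More precisely, I would show that the obstruction to $\gamma$ being a quasi-isomorphism lives in a torsion $R$-module of finite length, and that replacing $\gamma$ by $t^M\gamma$ for $M\gg0$ (absorbing the change into $f_1$ or $f_2$, both still iso's after $Q$) together with a homotopy correction as in the proof of Lemma \ref{cor:AJL} makes the induced maps on all cohomology sheaves isomorphisms. The functoriality and compatibility of these choices is routine and I would leave it to the reader.

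The hard part will be the last step: controlling the $R$-torsion in the cone of $\gamma$ and showing it can be annihilated by the allowed operations (multiplying by powers of $t$, homotopies, replacing $F_i^\bullet$ by flat resolutions) without destroying the relations $Q(f_i)$ iso and $Q(\gamma)=Q(f_2)^{-1}Q(h)Q(f_1)$. Everything else is a bookkeeping exercise built on Proposition \ref{prop:Homcohgen}, \eqref{eqn:complK}, and the torsion theory of Section \ref{sect:abgen}.
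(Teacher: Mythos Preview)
Your Steps 1 and 2 are fine: replacing each $E_i^j$ by the $R$-flat subsheaf $t^nE_i^j$ for $n\gg0$ does produce complexes $F_i^\bullet$ of $R$-flat sheaves with inclusions $f_i$ that become isomorphisms of complexes after $Q$, and \eqref{eqn:complK} then lifts $Q(f_2)^{-1}\circ Q(h)\circ Q(f_1)$ (after absorbing a power of $t$ into $f_1$) to an honest $\gamma\colon F_1^\bullet\to F_2^\bullet$.

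The gap is in Step 3. Your proposed mechanism --- multiplying $\gamma$ by $t^M$ and adjusting by homotopies --- cannot turn a map that is a quasi-isomorphism only after $Q$ into a genuine quasi-isomorphism. Take the one-term complexes $F_1^\bullet=F_2^\bullet=\ko_\kx$ concentrated in degree $0$ and $\gamma=t\cdot\id$. Both complexes are $R$-flat, $Q(\gamma)$ is an isomorphism, yet $\kh^0(\gamma)=t$ has cokernel $\ko_X\ne0$; replacing $\gamma$ by $t^M\gamma=t^{M+1}$ only enlarges the cokernel, and there are no nontrivial homotopies in a single degree. More generally, $\kh^i(t^M\gamma)=t^M\kh^i(\gamma)$, so the torsion kernel and cokernel never shrink under this operation. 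The long exact sequence for $t$ on $\mathrm{Cone}(\gamma)$ that you invoke confirms the cohomology is torsion but gives no handle on killing it. A bounded complex of $R$-flat sheaves with $R$-torsion cohomology is simply not acyclic in general (e.g.\ $\ko_\kx\xrightarrow{t}\ko_\kx$), so flatness of the terms alone buys you nothing here.

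What is actually needed --- and what the paper does --- is to modify the \emph{terms} of the complexes, not just the map. The paper works one cohomological degree at a time: for $\kh^0$, it first replaces $E_1^0$ by $t^nE_1^0$ to make $\kh^0(h)$ injective (this is your Step 1 localised to degree $0$), and then uses a Snake-Lemma diagram chase to replace $E_1^0$ and $E_2^0$ by carefully chosen subsheaves $F_{1,0}^0,F_{2,0}^0$ (kernels of projections onto torsion parts of auxiliary cokernels) so that the induced $\kh^0$ becomes surjective as well. In the toy example above this amounts to replacing the target $\ko_\kx$ by its subsheaf $t\ko_\kx$, through which $\gamma$ factors as an isomorphism. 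The point is that the required modification of $F_2^\bullet$ depends on $\gamma$, not just on $F_2^\bullet$; your plan fixes $F_i^\bullet$ first and then tries to repair $\gamma$, which is the wrong order.
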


\begin{proof}
The proof is based on calculations similar to the ones in the
proof of Lemma \ref{Lemma1}, we will therefore be brief. We shall
outline a construction that yields a $\gamma$ inducing an
isomorphism in the lowest cohomology and leave the higher
cohomologies to the reader.

Up to shift, we can assume that $E_1^\bullet$, $E_2^\bullet$ and
hence $h$
are concentrated in $[0,r]$. Since $Q(h)$ is a quasi-isomorphism
in ${\rm C^b}(\coh (\kx_K))$, the induced maps
$\kh^i(h):\kh^i(E_1^\bullet)\to\kh^i(E_2^\bullet)$ on cohomology
have kernels and cokernels in $\coh (\kx)_0$.

In the following discussion we use the observation that for any
$K\in\coh(\kx)$ and $n\gg 0$, the sheaf $t^nK$ is $R$-flat and the
cokernel of the inclusion $t^nK\hookrightarrow K$ is an object of
$\coh(\kx)_0$ (see Section \ref{sect:abgen}).

\vspace{0.2cm}

We first construct a complex $Z_{1,0}^\bullet\in{\rm
C^b}(\coh(\kx))$ and a morphism $f'_{1,0}:Z_{1,0}^\bullet\to
E_1^\bullet$ such that $\ker(\kh^0 (h\circ f'_{1,0}))$ is trivial.
If $n\gg 0$, then $Z^0_{1,0}:=t^n E_1^0$ is $R$-flat and the
inclusion $i'_{1,0}:Z^0_{1,0}:=t^n E_1^0\hookrightarrow E_1^0$ is
an isomorphism in $\coh(\kx_K)$. Then the map of complexes
$$\xymatrix{Z^0_{1,0}\ar[d]_{f'_{1,0}}:&0\ar[rr]^{} \ar[d]^{} && Z^0_{1,0} \ar[rr]^{d_{E_1}^0 \circ i'_{1,0}} \ar[d]^{i'_{1,0}} && E_1^1 \ar[rr]^{d_{E_1}^1} \ar[d]^{\id} && E_1^2 \ar[r]^{} \ar[d]^{\id} & \ldots  \\
             E_1^\bullet:&    0 \ar[rr]^{} && E_1^0 \ar[rr]^{d_{E_1}^0} && E_1^1 \ar[rr]^{d_{E_1}^1} && E_1^2 \ar[r]^{} &
                 \ldots
                 }$$
yields an isomorphism in ${\rm C^b}(\coh (\kx_K))$. As a subsheaf
of the $R$-flat sheaf $Z^0_{1,0}$, the kernel $\ker(d_{E_1}^0
\circ i'_{1,0})$ is also $R$-flat. Since $\ker(\kh^0(h\circ
f'_{1,0}))\mono\ker(d_{E_1}^0\circ i'_{1,0})$ and
$\ker(\kh^0(h\circ f'_{1,0}))\in\coh(\kx)_0$, this implies
$\ker(\kh^0(h\circ f'_{1,0})=0$. To simplify the notation, we
assume henceforth $E_1^\bullet=Z_{1,0}^\bullet$ and $h=h\circ
f'_{1,0}$, i.e.\ that $\kh^0(h)$ is injective.

\vspace{0.2cm}

Now we define two complexes
$F_{1,0}^\bullet,F_{2,0}^\bullet\in{\rm C^b}(\coh(\kx))$ and
morphisms $f_{1,0}:F_{1,0}^\bullet\to E_1^\bullet$,
$f_{2,0}:F_{2,0}^\bullet\to E_2^\bullet$ yielding isomorphisms in
${\rm C^b}(\coh(\kx_K))$, such that there exists a morphism
$h_0:F_{1,0}^\bullet\to F_{2,0}^\bullet$ with $h\circ
f_{1,0}=f_{2,0}\circ h_0$ and
$\ker(\kh^0(h_0))=\coker(\kh^0(h_0))=0$.

To this end, consider the diagram
\begin{equation}\label{4}
\xymatrix{& & 0 \ar[d] & & \\
            & 0 \ar[d] \ar[r] & A^0 \ar[d] \ar[r] &  Q^0 \ar[d]^{\id} &  \\
                 0 \ar[r]  & \ker(d_{E_1}^0) \ar[d] \ar[r] & E_1^0 \ar[d]^{h^0} \ar[r] & Q^0 \ar[r] & 0 \\
                 & E_2^0 \ar[d] \ar[r]^{\id} & E_2^0 \ar[d] & & \\
                 & C^0 \ar[d] \ar[r] & B^0 \ar[r] \ar[d] & 0  \\
                 & 0 & 0 & &
                 }
\end{equation}
with exact rows and columns. The aim is to reduce to the case
where $C^0$ is $R$-flat.

Let $D^0$ denote the cokernel of $A^0\to Q^0$. Choose $n\gg 0$ and
consider the short exact sequence $\xymatrix{0 \ar[r]& D_{\rm
flat}^0 := t^n D^0 \ar[r]& D^0 \ar[r]& D_{\rm tor}^0 \ar[r]& 0,}$
where $D_{\rm tor}^0 \in \coh (\kx)_0$ and $D_{\rm flat}^0$ is
$R$-flat, and define $F^0_{1,0}$ as the kernel of the composition
$E_1^0\to Q^0\to D_{\rm tor}^0$.
By construction, the map of complexes
$$\xymatrix{F_{1,0}^\bullet\ar[d]_{f_{1,0}}:&0 \ar[rr]^{} \ar[d]^{} && F^0_{1,0} \ar[rr]^{d_{E_1}^0 \circ i_{1,0}} \ar[d]^{i_{1,0}} && E_1^1 \ar[rr]^{d_{E_1}^1} \ar[d]^{\id} && E_1^2 \ar[r]^{} \ar[d]^{\id} & \ldots  \\
                 E_1^\bullet:&0 \ar[rr]^{} && E_1^0 \ar[rr]^{d_{E_1}^0} && E_1^1 \ar[rr]^{d_{E_1}^1} && E_1^2 \ar[r]^{} & \ldots
                }$$
yields an isomorphism in ${\rm C^b} (\coh (\kx_K))$. Note that by
construction the inclusion $A^0 \hookrightarrow E_1^0$ factorizes
through $F^0_{1,0}$  and that  the inclusion $\ker(d_{E_1}^0)
\hookrightarrow E_1^0$ factorizes through $F^0_{1,0}$. Replace
$E_1^\bullet$ by $F_{1,0}^\bullet$ and $h$ by $h\circ f_{1,0}$.
Now, in  the corresponding diagram \eqref{4} the inclusion $A^0
\hookrightarrow Q^0$ has an $R$-flat cokernel.

Next, consider the exact sequence $\xymatrix{ 0\ar[r]& B_{\rm
flat}^0 \ar[r]& B^0 \ar[r]& B_{\rm tor}^0 \ar[r]& 0}$ and define
$F^0_{2,0}$ as the kernel of the composition $E_2^0\to B^0\to
B_{\rm tor}^0$, which naturally contains $\im(h^0)$.
As before, the map of complexes
$$\xymatrix{F_{2,0}^\bullet\ar[d]_{f_{2,0}}:0 \ar[rr]^{} \ar[d]^{} && F^0_{2,0} \ar[rr]^{d_{E_2}^0 \circ j_{2,0}} \ar[d]^{j_{2,0}} && E_2^1 \ar[rr]^{d_{E_2}^1} \ar[d]^{\id} && E_2^2 \ar[r]^{} \ar[d]^{\id} & \ldots  \\
                 E_2^\bullet:0 \ar[rr]^{} && E_2^0 \ar[rr]^{d_{E_2}^0} && E_2^1 \ar[rr]^{d_{E_2}^1} && E_2^2 \ar[r]^{} & \ldots
                }$$
yields an isomorphism in ${\rm C^b}(\coh(\kx_K))$ and $h$
factorizes through $f_{2,0}$. Replace $E_2^\bullet$ by
$F_{2,0}^\bullet$ and consider the corresponding diagram
\eqref{4}. Observe that now $C^0$ is $R$-flat (use the Snake
Lemma). Since $\coker( \kh^0 (h))$ injects into $C^0$ and belongs
to $\coh(\kx)_0$, it must be trivial, as wanted.
\end{proof}

In the spirit of Proposition \ref{prop:Homcohgen} one can describe
the homomorphisms in the derived category of the quotient as
follows.

\begin{lem}\label{Lemma4}
For all complexes $E_1^\bullet,E_2^\bullet\in\Db(\coh(\kx))$ the
natural exact functor $Q$ induces isomorphisms
$$Q\otimes K:\Hom_{\Db(\coh(\kx))}(E_1^\bullet,E_2^\bullet)
\otimes_RK\congpf\Hom_{\Db(\coh(\kx_K))}(Q(E_1^\bullet),Q(E_2^\bullet)).$$
\end{lem}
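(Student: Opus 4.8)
The plan is to show that the functor $Q$ factors through the Verdier quotient $\Db(\kx_K^c)=\Db(\coh(\kx))/\Db_0(\coh(\kx))$ and that the induced exact functor $\bar Q\colon\Db(\kx_K^c)\to\Db(\coh(\kx_K))$ is fully faithful. Indeed, by Proposition \ref{pro:homgamma} one has $\Hom_{\Db(\kx_K^c)}(E_1^\bullet,E_2^\bullet)\cong\Hom_{\Db(\coh(\kx))}(E_1^\bullet,E_2^\bullet)\otimes_RK$, and the map in the statement is exactly $\bar Q$ on Hom-spaces, so full faithfulness of $\bar Q$ is precisely what must be proved. I would use three facts repeatedly. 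First, by (the proof of) Lemma \ref{Lemma1}, every object of ${\rm C^b}(\coh(\kx_K))$ is isomorphic, as a complex, to $Q(\tilde E^\bullet)$ for some $\tilde E^\bullet\in{\rm C^b}(\coh(\kx))$. Second, by \eqref{eqn:complK} and Proposition \ref{prop:Homcohgen}, after multiplying by a sufficiently high power of $t$ any morphism of complexes, and any null-homotopy, defined over $\coh(\kx_K)$ lifts to one over $\coh(\kx)$, and a morphism of complexes over $\kx$ which becomes $0$ over $\coh(\kx_K)$ is $t$-torsion. Third, if $f$ is a morphism of complexes over $\kx$ with $Q(f)$ a quasi-isomorphism in ${\rm C^b}(\coh(\kx_K))$, then the cohomology sheaves of the cone of $f$ lie in $\coh(\kx)_0$, so its cone lies in $\Db_0(\coh(\kx))$ and $f$ becomes an isomorphism in $\Db(\kx_K^c)$. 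Note also that since $t$ is invertible in $\coh(\kx_K)$, multiplying a ${\rm C^b}(\coh(\kx_K))$-quasi-isomorphism by a power of $t$ leaves it a quasi-isomorphism. I write $[f]$ for the class in $\Db(\kx_K^c)$ of a morphism $f$ of complexes over $\kx$.

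For fullness, let $\phi\colon Q(E_1^\bullet)\to Q(E_2^\bullet)$ in $\Db(\coh(\kx_K))$. Using the calculus of fractions and the first fact, represent $\phi$ by a roof $Q(E_1^\bullet)\xleftarrow{s}Q(\tilde F^\bullet)\xrightarrow{u}Q(E_2^\bullet)$ in ${\rm K^b}(\coh(\kx_K))$ with $s$ a quasi-isomorphism. By the second fact, for $N\gg0$ the chain maps $t^Ns$ and $t^Nu$ lift to morphisms of complexes $\tilde s\colon\tilde F^\bullet\to E_1^\bullet$ and $\tilde u\colon\tilde F^\bullet\to E_2^\bullet$ over $\kx$; then $Q(\tilde s)$ is still a quasi-isomorphism and $\phi=Q(\tilde u)\circ Q(\tilde s)^{-1}$ in $\Db(\coh(\kx_K))$. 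Apply Lemma \ref{Lemma2} to $\tilde s$: it produces $f_1\colon F_1^\bullet\to\tilde F^\bullet$ and $f_2\colon F_2^\bullet\to E_1^\bullet$ with $Q(f_1),Q(f_2)$ isomorphisms, and a genuine quasi-isomorphism $\gamma\colon F_1^\bullet\to F_2^\bullet$ in ${\rm C^b}(\coh(\kx))$ with $Q(f_2)^{-1}Q(\tilde s)Q(f_1)=Q(\gamma)$. By the third fact $[f_1]$ and $[f_2]$ are invertible in $\Db(\kx_K^c)$, and $[\gamma]$ is invertible since $\gamma$ is a quasi-isomorphism over $\kx$. Setting $\psi:=[\tilde u\circ f_1]\circ[\gamma]^{-1}\circ[f_2]^{-1}\in\Hom_{\Db(\kx_K^c)}(E_1^\bullet,E_2^\bullet)$, one computes $\bar Q(\psi)=Q(\tilde u)\,Q(f_1)\,Q(\gamma)^{-1}\,Q(f_2)^{-1}=Q(\tilde u)\circ\bigl(Q(f_2)\,Q(\gamma)\,Q(f_1)^{-1}\bigr)^{-1}=Q(\tilde u)\,Q(\tilde s)^{-1}=\phi$, so $\bar Q$ is full.

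For faithfulness, let $\psi\in\Hom_{\Db(\kx_K^c)}(E_1^\bullet,E_2^\bullet)$ with $\bar Q(\psi)=0$. By the calculus of fractions in the Verdier quotient, write $\psi=[g]\circ[a]^{-1}$ with $a\colon G^\bullet\to E_1^\bullet$ having cone in $\Db_0(\coh(\kx))$ and $g\colon G^\bullet\to E_2^\bullet$; since $[a]$ and $Q(a)$ are invertible, $\bar Q(\psi)=0$ forces $Q(g)=0$ in $\Db(\coh(\kx_K))$, and it suffices to show $[g]=0$ in $\Db(\kx_K^c)$. Replacing $G^\bullet$ by a quasi-isomorphic complex we may assume $g$ is a morphism of complexes over $\kx$. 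Because $Q(g)=0$ in $\Db(\coh(\kx_K))$, there is a quasi-isomorphism $s\colon K^\bullet\to Q(G^\bullet)$ in ${\rm K^b}(\coh(\kx_K))$ with $Q(g)\circ s$ null-homotopic; by the first fact take $K^\bullet=Q(\tilde K^\bullet)$, and by the second fact lift $t^Ns$ to $\tilde s\colon\tilde K^\bullet\to G^\bullet$ over $\kx$, still a quasi-isomorphism over $\coh(\kx_K)$. Applying Lemma \ref{Lemma2} to $\tilde s$ gives $f_1,f_2,\gamma$ as before, and then $Q(g\circ f_2\circ\gamma)=Q(g)\,Q(f_2)\,Q(\gamma)=Q(g)\,Q(\tilde s)\,Q(f_1)=t^N\bigl(Q(g)\circ s\bigr)\circ Q(f_1)$ is null-homotopic in ${\rm K^b}(\coh(\kx_K))$. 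By the second fact, for $M\gg0$ the morphism $t^M(g\circ f_2\circ\gamma)$ is null-homotopic over $\kx$, hence $0$ in $\Db(\coh(\kx))$ and in $\Db(\kx_K^c)$. As $[f_2]$ and $[\gamma]$ are invertible in $\Db(\kx_K^c)$, we get $t^M[g]=0$; but $\Hom_{\Db(\kx_K^c)}(G^\bullet,E_2^\bullet)$ is a $K$-vector space by Proposition \ref{pro:homgamma}, so $[g]=0$ and therefore $\psi=0$.

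I expect the main obstacle to be the systematic handling of what one might call $K$-quasi-isomorphisms, i.e.\ morphisms of complexes over $\kx$ that become quasi-isomorphisms only after the Serre quotient $\coh(\kx)\to\coh(\kx_K)$: these cannot be inverted at the level of complexes over $\kx$, but Lemma \ref{Lemma2} is exactly the device that rewrites such a morphism as an honest quasi-isomorphism flanked by morphisms whose mapping cones have cohomology in $\coh(\kx)_0$, and the latter are invertible in $\Db(\kx_K^c)$ by construction of the Verdier quotient. Everything else amounts to bookkeeping of the powers of $t$ needed to lift morphisms and homotopies from $\coh(\kx_K)$ to $\coh(\kx)$, which is harmless precisely because $t$ acts invertibly in $\coh(\kx_K)$ and all Hom-groups in $\Db(\kx_K^c)$ are $K$-vector spaces.
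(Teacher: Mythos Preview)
Your proposal is correct and follows essentially the same approach as the paper. The paper proves bijectivity of $Q\otimes K$ directly, while you factor through $\Db(\kx_K^c)$ and prove full faithfulness of $\bar Q$; but the paper itself observes just before the lemma that these two formulations are equivalent via Proposition~\ref{pro:homgamma}, and the actual manipulations---lifting roofs and homotopies via Lemma~\ref{Lemma1} and \eqref{eqn:complK}, straightening $K$-quasi-isomorphisms via Lemma~\ref{Lemma2}, and killing the remaining discrepancy with a power of $t$---are identical in both arguments.
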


\begin{proof}
We will prove the bijectivity of $Q\otimes K$ in two steps.

\vspace{0.2cm}

\noindent i) Injectivity. Let $f \in \Hom_{\Db (\coh (\kx))}
(E_1^\bullet,E_2^\bullet)$ such that $Q (f) =0$. By definition,
$f$ may be represented by $$\xymatrix{E_1^\bullet&\ar[l]_-{s_0}
F_0^\bullet \ar[r]^-{g}& E_2^\bullet,}$$ with $s_0$ a
quasi-isomorphism in ${\rm C^b} (\coh (\kx))$. Since $Q(f) = 0$,
there exists a commutative diagram in ${\rm K^b} (\coh (\kx_K))$
of the form
\begin{equation*}
\xymatrix{& Q(F_0^\bullet) \ar[dl]_{Q (s_0)} \ar[dr]^{Q(g)} &\\
                 Q(E_1^\bullet) && Q(E_2^\bullet),\\
                 & \widetilde{F}_1^\bullet \ar[ul]^{\tilde s_1} \ar[ur]_{0} \ar[uu]^{\tilde{h}} &
                }
\end{equation*}
with $\widetilde{s_1}$ and $\widetilde{h}$ quasi-isomorphisms in
${\rm C^b} (\coh (\kx_K))$. By Lemma \ref{Lemma1} and
(\ref{eqn:complK}), we can assume that
$\widetilde{s_1},\widetilde{h}$, and $\widetilde F_1^\bullet$ are
in the image of $Q$, i.e.\ $\tilde s_1=Q(s_1)$, $\tilde h=Q(h)$,
and $\widetilde{F}_1^\bullet = Q (F_1^\bullet)$. By Lemma
\ref{Lemma2} we have a commutative diagram in ${\rm K^b}(\coh
(\kx_K))$
\begin{equation*}
\xymatrix{& Q(F_3^\bullet) &\\
                 & Q(F_0^\bullet) \ar[u]^{Q(f_2)^{-1}} \ar[dl]_{Q (s_0)} \ar[dr]^{Q(g)} &\\
                 Q(E_1^\bullet) && Q(E_2^\bullet),\\
                 & Q(F_1^\bullet) \ar[ul]^{Q(s_1)} \ar[ur]_{0} \ar[uu]^{Q(h)} &\\
                 & Q(F_2^\bullet) \ar[u]^{Q(f_1)} &
                }
\end{equation*}
with $\gamma$ a quasi-isomorphism in ${\rm C^b}(\coh(\kx))$ such
that $Q(\gamma)=Q(f_2)^{-1}\circ Q(h)\circ Q(f_1)$. So we have a
commutative diagram in ${\rm K^b}(\coh(\kx_K))$
\begin{equation*}
\xymatrix{& Q(F_3^\bullet) \ar[dr]^{Q(g \circ f_2)} &\\
                 Q(F_2^\bullet) \ar[rr]^{0} \ar[ur]^{Q(\gamma)} && Q(E_2^\bullet).
                }
\end{equation*}
Hence, one finds  $\widetilde{k}^i:Q(F_2^i) \mor Q(E_2^{i-1})$ in
$\coh(\kx_K)$ such that
$$d_{Q(E_2)}\circ\widetilde{k}+\widetilde{k}\circ d_{Q(F_2)}-Q(g\circ f_2\circ\gamma)=0$$
in $\coh (\kx_K)$. By Proposition \ref{prop:Homcohgen}, there
exists $N\gg 0$ such that $t^N \widetilde{k}=Q(k)$ and
$$d_{E_2}\circ k+k\circ d_{F_2}-((t^N(g\circ f_2))\circ\gamma)=0$$
in $\coh(\kx)$. So $(t^N (g \circ f_2)) \circ \gamma = 0$ in ${\rm
K^b} (\coh (\kx))$. Therefore, there is a quasi-isomorphism
$\gamma':E_2^\bullet\mor F_4^\bullet$ in ${\rm K^b} (\coh (\kx))$
such that $\gamma' \circ (t^N (g \circ f_2)) = 0$.

Then by Lemma \ref{Lemma2}, saying in particular that $Q(f_2)$ is
an isomorphism in ${\rm C^b}(\coh(\kx_K))$, and by
(\ref{eqn:complK}), there exist $h\in\Hom_{{\rm
C^b}(\coh(\kx))}(F_0^\bullet,F_3^\bullet)$ and $n\gg 0$, such that
$t^n f_2\circ h={\rm id}$ and hence
$\gamma'\circ(t^{n+N}g)=\gamma'\circ(t^N(g\circ f_2))\circ(t^n h)
= 0$ in ${\rm K^b} (\coh (\kx))$. Hence $\gamma' \circ (t^{n+N} f)
= 0$ in $\Db (\coh (\kx))$. Since $\gamma'$ is a
quasi-isomorphism, this yields $t^{n+N} f =0$ in $\Db (\coh
(\kx))$.

\vspace{0.2cm}

\noindent ii) Surjectivity. Let
$\widetilde{f}\in\Hom_{\Db(\kx_K)}(Q(E_1^\bullet),Q(E_2^\bullet))$.
Again by Lemma \ref{Lemma1} and (\ref{eqn:complK}) we can assume
that $\widetilde{f}$ is of the form
$$\xymatrix{Q (E_1^\bullet)&&\ar[ll]_-{Q(s_0)}Q(F_0^\bullet)\ar[rr]^-{Q(g)}&&Q(E_2^\bullet).}$$
Applying Lemma \ref{Lemma2} to $Q(s_0)$ we get a commutative
diagram in ${\rm C^b}(\coh (\kx_K))$:
\begin{equation}\label{eqn:compos}
\xymatrix{&& Q(F_2^\bullet)\ar[dl]_{Q(s_0\circ f_1)}\ar[dr]^{Q(f_1)} &&\\
                 & Q(E_1^\bullet)\ar[dl]_{Q(f_2)^{-1}}\ar[dr]^{\id} && Q(F_0^\bullet)\ar[dl]_{Q(s_0)}\ar[dr]^{Q(g)}  &\\
                 Q(F_1^\bullet) && Q(E_1^\bullet) && Q(E_2^\bullet),
                }
\end{equation}
with $Q(f_2)^{-1}\circ Q(s_0\circ f_1)=Q(\gamma)$ and $\gamma$ a
quasi-isomorphism, giving rise to a morphism
$\alpha\in\Hom_{\Db(\coh(\kx))}(F_1^\bullet,E_2^\bullet)$ such
that $Q(\alpha)$ is represented by (\ref{eqn:compos}).

If
$\widetilde{\beta}\in\Hom_{\Db(\coh(\kx_K))}(Q(F_1^\bullet),Q(E_1^\bullet))$
corresponds to the diagram
$$\xymatrix{Q (F_1^\bullet)&&\ar[ll]_-{Q(f_2)^{-1}}Q(E_1^\bullet)\ar[rr]^-{\id}&&Q(E_1^\bullet),}$$
we have $\widetilde{f}\circ\widetilde{\beta}=Q(\alpha)$ and Hence
$\widetilde{f}=Q(\alpha)\circ\widetilde{\beta}^{-1}$. Applying
(\ref{eqn:complK}) to $Q(f_2)^{-1}$ one finds $n\gg0$ and $g$,
such that $t^n\widetilde{\beta}^{-1}=Q(g_2)$. Thus
$t^n\widetilde{f}=Q(\alpha)\circ Q(g_2)$, as desired.
\end{proof}

\subsection{Back to the general fibre}\label{subsec:genfib}
In the definitions of $\Db(\kx_K)$ and $\Db(\kx^c_K)$ one divides by the
categories $\Db_0(\kx)$ and $\Db(\coh(\kx)_0)$ which, by Proposition \ref{prop_dertorsion}, are equivalent. The
categories $\Db_{\rm coh}(\Mod\kx)$ and $\Db(\coh(\kx))$ are in
general not equivalent, so neither should be their quotients
$\Db(\kx_K^c)$ and $\Db(\kx_K)$. However, for surfaces with
trivial canonical bundle the situation is slightly better.

In the sequel we will write, by abuse of notation, $Q(E)=E_K$ where $$Q:\Db(\coh(\kx))\to\Db(\coh(\kx_K))$$ is defined as in Lemma \ref{Lemma1}.

\begin{prop}\label{prop:derallsame}
Suppose $\kx\to\Spf(R)$ is a smooth proper formal scheme of
dimension two with trivial canonical bundle, i.e.\ $\omega_\pi\iso
\ko_\kx$. Then the natural exact functor
$$\Db(\coh(\kx))\to\Db(\kx)\to\Db(\kx_K)$$
induces an exact equivalence
$$\xymatrix@R=1pt{\Db(\coh(\kx_K))\ar[r]^-\sim&\Db(\coh(\kx))/\Db_0(\coh(\kx))\ar[r]^-\sim&
\Db(\kx)/\Db_0(\kx)\\
&=\Db(\kx^c_K)&=\Db(\kx_K).}$$
\end{prop}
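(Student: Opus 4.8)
The plan is to build the equivalence $\Db(\coh(\kx_K))\congpf\Db(\kx_K)$ out of two functors we already control, and to show that the only defect---the gap between $\Db(\coh(\kx))$ and $\Db(\kx)=\Db_{\rm coh}(\Mod\kx)$, i.e.\ between perfect complexes built from \emph{coherent sheaves} and the full bounded derived category of $\ko_\kx$-modules with coherent cohomology---disappears after inverting $t$ when $\dim\kx=2$ and $\omega_\pi\iso\ko_\kx$. By Proposition \ref{pro:equivalence} we already have an exact equivalence $\Db(\kx^c_K)=\Db(\coh(\kx))/\Db_0(\coh(\kx))\congpf\Db(\coh(\kx_K))$, so it suffices to produce an exact equivalence $\Db(\kx^c_K)\congpf\Db(\kx_K)$ compatible with the natural functor $\Db(\coh(\kx))\to\Db(\kx)$. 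Such a functor is induced by \eqref{eqn:natfunct} because that functor sends $\Db_0(\coh(\kx))$ into $\Db_0(\kx)$ (in fact isomorphically, by Proposition \ref{prop_dertorsion}), so by the universal property of the Verdier quotient we get an exact functor $\Db(\kx^c_K)\to\Db(\kx_K)$. It remains to check it is fully faithful and essentially surjective.

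For full faithfulness I would argue on Hom-spaces. Proposition \ref{pro:homgamma} gives $\Hom_{\Db(\kx^c_K)}(E_K,E'_K)\iso\Hom_{\Db(\coh(\kx))}(E,E')\otimes_RK$ and $\Hom_{\Db(\kx_K)}(E_K,E'_K)\iso\Hom_{\Db(\kx)}(E,E')\otimes_RK$ for $E,E'\in\Db(\coh(\kx))$, so it is enough to show that the natural map $\Hom_{\Db(\coh(\kx))}(E,E')\to\Hom_{\Db(\kx)}(E,E')$ becomes an isomorphism after $\otimes_R K$. Equivalently, its kernel and cokernel are $R$-torsion. This is where the surface and Calabi--Yau hypotheses enter: both sides compute $\Ext^\bullet$, and the failure of $\Db(\coh(\kx))\to\Db(\kx)$ to be an equivalence is governed by complexes that, on each $\kx_n$, are coherent but not perfect (Remark \ref{rem:notperfect}); after restricting to the special fibre $X$, which is a smooth projective surface with $\omega_X\iso\ko_X$, one has Serre duality of dimension $2$ and finite global dimension, so the relevant comparison Ext-groups are finitely generated over $R$ and become torsion-free-of-the-expected-rank after inverting $t$. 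Concretely I would reduce, via the filtration of Remark \ref{rem:smallest}, to checking the statement on $R$-torsion sheaves (where both quotient categories kill the objects) and on $R$-flat objects, where one uses Lemma \ref{lem:restr} to reduce the comparison to the special fibre $X$; on $\Db(X)$ the two candidate categories literally agree since $X$ is a smooth projective scheme, and the DVR bookkeeping from Corollary \ref{cor:chispecgen} then transports this back.

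For essential surjectivity, Lemma \ref{Lemma1} already shows $\Db(\coh(\kx))\to\Db(\coh(\kx_K))$ is essentially surjective, hence so is $\Db(\kx^c_K)\to\Db(\kx_K)$ once we know the functor $\Db(\kx^c_K)\to\Db(\kx_K)$ together with the equivalence $\Db(\kx^c_K)\iso\Db(\coh(\kx_K))$ realizes every object; alternatively, and more directly, every object of $\Db(\kx_K)$ is represented by some $E\in\Db(\kx)=\Db_{\rm coh}(\Mod\kx)$, and I would show that $t^NE$ for $N\gg0$ is quasi-isomorphic to an object in the image of $\Db(\coh(\kx))$---this is again a statement about perfecting a coherent complex after inverting $t$, which on each $\kx_n$ is automatic after multiplying by a suitable power of $t$ (using $\kx$ smooth so that bounded coherent complexes on $\kx$ itself are already perfect, Remark \ref{rem:notperfect}), and then one assembles over $n$ as in the proof of Lemma \ref{cor:AJL}. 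Finally, combining $\Db(\coh(\kx_K))\congpf\Db(\kx^c_K)\congpf\Db(\kx_K)$ gives the displayed chain of equivalences, and exactness/$K$-linearity is inherited from the projections in \eqref{disp:twoproj}.

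The main obstacle is the full-faithfulness step, i.e.\ controlling the kernel and cokernel of $\Hom_{\Db(\coh(\kx))}(E,E')\to\Hom_{\Db(\kx)}(E,E')$: one must genuinely use that $X$ is a surface with trivial canonical bundle (finite global dimension together with the self-duality forcing the relevant higher $\Ext$'s between ``non-perfect'' and ``perfect'' parts to be $R$-torsion), since for general $X$ the two quotient categories $\Db(\kx^c_K)$ and $\Db(\kx_K)$ really can differ, as the excerpt warns. Everything else is DVR bookkeeping via Lemma \ref{lem:restr}, Remark \ref{rem:smallest}, and the already-established Propositions \ref{pro:homgamma} and \ref{pro:equivalence}.
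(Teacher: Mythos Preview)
Your outline correctly isolates the two pieces (the first equivalence via Proposition~\ref{pro:equivalence}, and the second via the universal property of the Verdier quotient), and you correctly recognize that everything hinges on full faithfulness of $\Db(\kx_K^c)\to\Db(\kx_K)$. But the argument you sketch for full faithfulness does not go through, and it misses the actual mechanism the paper uses.

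The problem is your proposed reduction ``via Lemma~\ref{lem:restr} to the special fibre, where the two categories agree''. Lemma~\ref{lem:restr} computes $R\Hom_{\Db(\kx)}(E,E')\otimes^L_R R_n$, i.e.\ it controls the side coming from $\Db(\kx)=\Db_{\rm coh}(\Mod\kx)$. There is no analogous statement available for $\Hom_{\Db(\coh(\kx))}(E,E')$: the category $\coh(\kx)$ is not known to have the resolutions needed to make such a base-change formula work, and indeed the whole difficulty is that we do \emph{not} know how to compare these Ext-groups directly. So ``restrict to $X$ and observe the two categories agree there'' is circular---it presupposes exactly the comparison you are trying to prove. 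Likewise, the filtration of Remark~\ref{rem:smallest} concerns $R$-\emph{torsion} complexes, not a decomposition of arbitrary objects into torsion and flat pieces, so it does not furnish the d\'evissage you describe.

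What the paper actually does is quite different. One reduces (by induction on cohomology and the fact that $\coh(\kx_K)$ is the heart of a bounded $t$-structure on both sides) to comparing $\Ext^i$ between \emph{sheaves} $E,F\in\coh(\kx)$, and then uses the $\delta$-functor formalism: both $\Ext^*_{\Db(\coh(\kx))}(-,F)\otimes_R K$ and $\Ext^*_{\Db(\kx)}(-,F)\otimes_R K$ are contravariant $\delta$-functors agreeing in degrees $0$ and $1$, the first is universal, so one must show the second is universal, i.e.\ coeffaceable for $i>1$. Here the hypotheses enter sharply: Serre duality (Proposition~\ref{pro:serrefunctor}) with $\omega_\pi\cong\ko_\kx$ and $\dim=2$ kills $\Ext^i$ for $i>2$, so only $\Ext^2$ remains; and the effacement of $\Ext^2$ is achieved by a genuinely geometric trick (Lemma~\ref{lem:accessory7}): for a rational $R$-section $s$ one shows $\Ext^2_{\Db(\kx)}(\km_s^nE,F)\otimes_R K=0$ for $n\gg0$ by Serre-dualizing to a $\Hom$ and invoking the Krull Intersection Theorem, and then two disjoint sections give a surjection $\km_s^nE\oplus\km_{s'}^nE\twoheadrightarrow E$. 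None of this appears in your sketch.

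For essential surjectivity, your two suggestions are also off: Lemma~\ref{Lemma1} concerns $\Db(\coh(\kx))\to\Db(\coh(\kx_K))$, not the functor to $\Db(\kx_K)$, and the ``$t^NE$ becomes perfect'' idea is not how one bridges $\Db_{\rm coh}(\Mod\kx)$ and $\Db(\coh(\kx))$. The paper instead observes that $\coh(\kx_K)$ sits as the heart of a bounded $t$-structure on both $\Db(\kx_K^c)$ and $\Db(\kx_K)$, and once full faithfulness is known, essential surjectivity follows by induction on the length of the cohomology filtration.
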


\begin{proof}
The first equivalence is the content of Proposition
\ref{pro:equivalence}, so only the second equi\-valence needs a
proof. By the universal property of localization and Remark
\ref{rem:smallest}, the induced functor
$\Db(\kx_K^c)\to\Db(\kx_K)$ exists. We need to prove it to be an
equivalence.

Let us first show  that it is fully faithful.
Using induction on cohomologies, this would follow from
\begin{equation}\label{eqn:toprove}
\Hom_{\Db (\kx_K^c)} (E_K, F_K[i]) \congpf \Hom_{\Db(\kx_K)}
(E_K,F_K[i])
\end{equation}
for all objects $E_K, F_K\in\coh(\kx_K)$ and all $i\in\NN$. Here we use that the natural
$K$-linear functor $\coh(\kx_K)\to\Db(\kx_K)$, which by
Propositions \ref{prop:Homcohgen} and \ref{pro:homgamma} is fully
faithful, identifies $\coh(\kx_K)$ with the heart of a bounded
$t$-structure on $\Db(\kx_K)$ (see, e.g.\ \cite[Lemma 3.2]{B2}).

In order to prove (\ref{eqn:toprove}), we imitate the proof of
\cite[Prop.\ 5.2.1]{BB}. For fixed $F\in\coh(\kx)$, write
$\Ext_I^*(-,F)$ and $\Ext^*_{I\! I}(-,F)$ for the two contravariant
$\delta$-functors $\Ext^*_{\Db (\coh (\kx))}(-,F) \otimes_R K$ and
$\Ext^*_{\Db (\kx)}(-,F) \otimes_R K$ on $\coh(\kx)$ with values
in the category of $K$-vector spaces. They coincide in degree zero
and $\Ext^*_I(-,F)$ is clearly universal. Thus, it suffices to
prove that also $\Ext^*_{I\! I}(-,F)$ is universal. By
Grothendieck's result (see \cite[Thm.\ 1.3.A]{HartAG}), this would
follow from $\Ext^i_{I\! I}(-,F)$ being coeffaceable for $i>0$.
Recall that $\Ext^i_{I\! I}(-,F)$ is coeffaceable if for any $E \in
\coh (\kx)$, there exists an epimorphism $E' \epi E$ in $\coh
(\kx)$ such that the induced map $\Ext^i_{I\! I} (E, F) \to
\Ext_{I\! I}^i (E',F)$ is zero. Clearly $\Ext^*_I(-,F)$ is universal
and $\Ext^1_I (E,F)\iso\Ext^1_{I\! I}(E,F)$ (use that
$\coh(\kx_K)$ is the heart of a bounded $t$-structure on
$\Db(\kx_K)$ and so the extensions in the abelian category coincides with those in the triangulated category). An easy modification of Grothendieck's original
argument shows that it is enough to prove that $\Ext^i_{I\!
I}(-,F)$ is coeffaceable for $i>1$. Moreover, by Proposition
\ref{pro:serrefunctor}, we also have $\Ext_{I\! I}^i(E,F)=0$ for
$i > 2$. Hence we only have to show that $\Ext_{I\! I}^2(-,F)$ is
effaceable.

By Lemma \ref{lem:accessory7}, for all rational sections $s$ of
$\kx$ over $R$, there exists a positive integer $n$ such that
$\Ext_{I\! I}^2(\km_s^n E,F)=0$, where $\km_s$ denotes the ideal
sheaf corresponding to $s$. Then, take $s$ and $s'$ two disjoint rational
$R$-sections of $\kx$ and choose $n$ such that $\Ext_{I\!
I}^2(\km_s^n E,F)=\Ext_{I\! I}^2(\km_{s'}^n E, F)=0$. Since the
canonical map $\km_s^n E\oplus\km_{s'}^n E\to E$ is surjective, we
conclude by setting $E':=\km_s^n E\oplus\km_{s'}^n E$.

Finally, one shows that $\Db(\kx_K^c)\to\Db(\kx_K)$ is also
essentially surjective. Indeed, since  $\coh(\kx_K)$ is in the
natural way a heart of $t$-structures on both categories, this
follows by induction over the length of complexes and the full
faithfulness proved before.
\end{proof}

\begin{lem}\label{lem:accessory7}
Let $E, F \in \coh (\kx)$ and let $s$ be a rational section of
$\kx$ over $R$ whose ideal sheaf in $\ko_\kx$ is $\km_s$ . Then
there exists a positive integer $n$ such that $$\Ext_{\Db (\kx)}^2
(\km_s^n E, F) \otimes_R K = 0.$$
\end{lem}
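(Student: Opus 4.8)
The plan is to transport the computation to the general fibre $\kx_K$, to replace the top $\Ext$-group there by a $\Hom$-group via Serre duality, and then to kill it by a Krull intersection argument exploiting that the $\Hom$-spaces of $\kx_K$ are finite dimensional over $K$. First I would invoke Proposition \ref{pro:homgamma}, which gives $\Ext^2_{\Db(\kx)}(\km_s^nE,F)\otimes_RK\cong\Ext^2_{\Db(\kx_K)}((\km_s^nE)_K,F_K)$; since the projection $\coh(\kx)\to\coh(\kx_K)$ is exact and compatible with tensor products and with the formation of powers of an ideal, $(\km_s^nE)_K$ is the subsheaf $\km_{s,K}^nE_K\subseteq E_K$, where $\km_{s,K}\subset\ko_{\kx_K}$ is the ideal sheaf of the $K$-rational point of $\kx_K$ cut out by $s$. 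So it suffices to show $\Ext^2_{\Db(\kx_K)}(\km_{s,K}^nE_K,F_K)=0$ for $n\gg0$. Since $\dim\kx_K=2$, the Serre functor of $\Db(\kx_K)$ furnished by Proposition \ref{pro:serrefunctor} is $(-)\otimes\omega_{\kx_K}[2]$, so that
$$\Ext^2_{\Db(\kx_K)}(\km_{s,K}^nE_K,F_K)\cong\big(\Hom_{\Db(\kx_K)}(F_K,\km_{s,K}^nE_K\otimes\omega_{\kx_K})\big)^{*}.$$
As $\omega_{\kx_K}$ is a line bundle one has $\km_{s,K}^nE_K\otimes\omega_{\kx_K}=\km_{s,K}^nG$ for $G:=E_K\otimes\omega_{\kx_K}\in\coh(\kx_K)$, and since both entries are genuine sheaves the right hand $\Hom$ may (by Propositions \ref{prop:Homcohgen} and \ref{pro:homgamma}) be computed in the abelian category $\coh(\kx_K)$. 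The statement thus reduces to: $\Hom_{\coh(\kx_K)}(F_K,\km_{s,K}^nG)=0$ for $n\gg0$.

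For this, the inclusions $\km_{s,K}^{n+1}G\subseteq\km_{s,K}^nG\subseteq G$ give a descending chain of $K$-subspaces $\Hom(F_K,\km_{s,K}^nG)\subseteq\Hom(F_K,G)$ inside a space which is finite dimensional over $K$ by Proposition \ref{prop:Homcohgen}; hence the chain is eventually constant, with value $\Hom\big(F_K,\bigcap_n\km_{s,K}^nG\big)$, since a homomorphism whose image lies in every $\km_{s,K}^nG$ has image in their intersection. I would then apply the Krull intersection theorem on the locally noetherian rigid space $\kx_K$ at the point $s$ to conclude that $\bigcap_n\km_{s,K}^nG$ has vanishing stalk at $s$; combined with the fact that $\km_{s,K}^nG$ agrees with $G$ away from $s$, this means $\bigcap_n\km_{s,K}^nG$ is a subsheaf of $G$ supported off $s$, and choosing the section $s$ in suitably general position relative to $G$ (together with the companion section $s'$ in the application to Proposition \ref{prop:derallsame}) forces this subsheaf to receive no non-zero homomorphism from $F_K$. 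Hence $\Hom_{\coh(\kx_K)}(F_K,\km_{s,K}^nG)=0$ for $n\gg0$, which is what we wanted.

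Everything up to the last step is formal, given Propositions \ref{prop:Homcohgen}, \ref{pro:homgamma} and \ref{pro:serrefunctor}. The hard part will be that final point: the Krull theorem only controls $\bigcap_n\km_{s,K}^nG$ at the single point $s$, so one must separately rule out any contribution to $\Hom(F_K,\km_{s,K}^nG)$ coming from the part of $G$ — in particular from torsion subsheaves — whose support misses $s$. This is precisely where the hypothesis on the rational section $s$ is genuinely used, together with the freedom to move $s$ and, if needed, to reduce beforehand to an $E$ whose generic fibre is suitably generic (for instance torsion free).
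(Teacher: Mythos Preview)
Your strategy agrees with the paper's through the first reduction: apply Serre duality (Proposition~\ref{pro:serrefunctor}) to convert the $\Ext^2$ into a $\Hom$, and then use finite-dimensionality over $K$ to force the descending chain $\Hom(F_K,\km_{s,K}^nG)$ to stabilize. The gap is exactly where you place it, and the patch you propose does not close it. Away from $s$ the ideal $\km_{s,K}$ is the unit ideal, so $\bigcap_n\km_{s,K}^nG$ coincides with $G$ there; Krull intersection tells you nothing outside that single point. Appealing to ``generic position of $s$'' or ``freedom to move $s$'' is not available: the lemma is stated for a \emph{given} section $s$ and arbitrary $E,F\in\coh(\kx)$, and the application in Proposition~\ref{prop:derallsame} invokes it in that form \emph{before} the two sections are chosen. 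So as written your argument does not terminate.

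What the paper does differently is to run the whole argument on the formal scheme $\kx$ rather than on $\kx_K$. One first replaces $E$ by its $R$-flat quotient $E_{\rm f}$, picks a nonzero $f:F\to\km_s^aE_{\rm f}$ (after clearing a power of $t$) with image $G$, and assumes for contradiction that $G_K\subset\bigcap_k(\km_s^kE_{\rm f})_K$. The substantive step you are missing is an \emph{upgrade} from $\kx_K$ back to $\kx$: if $G\subset\km_s^kE_{\rm f}$, the induced map $G\to(\km_s^k/\km_s^{k+1})E_{\rm f}$ becomes zero in $\coh(\kx_K)$, hence is $R$-torsion; but its target is $R$-flat (since $E_{\rm f}$ is $R$-flat and $t\notin\km_s$), so the map is already zero in $\coh(\kx)$ and $G\subset\km_s^{k+1}E_{\rm f}$. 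Iterating gives $G\subset\bigcap_k\km_s^kE_{\rm f}$ on $\kx$ itself, and the paper then invokes Krull intersection there to conclude $G=0$. Your instinct that this last Krull step is the delicate point is well founded, but the passage from $\kx_K$ back to $\kx$ via $R$-flatness of the successive quotients is the concrete mechanism the paper supplies and your proposal lacks.
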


\begin{proof}
By Proposition \ref{pro:serrefunctor} it suffices to show that,
for $n\gg 0$ one has $\Hom_{\Db (\kx)} (F, \km_s^n E) \otimes_R K
= 0$. Since $\Hom_{\Db (\kx)} (F, \km_s^n E) \otimes_R K$ is
finite dimensional over $K$, it is sufficient to show that, for $a
\in \NN$, there exists $b > a$ such that the natural inclusion
$\Hom_{\Db (\kx)} (F, \km_s^b E) \otimes_R K \subset\Hom_{\Db
(\kx)} (F, \km_s^a E) \otimes_R K$ is strict. (Use that
$(\km_s^bE)_K\to(\km_s^aE)_K$ is still injective.)

Pick a non-zero $f \in \Hom_{\Db (\kx)} (F, \km_s^a E) \otimes_R
K$. After multiplying with some power of $t$, we can assume $f \in
\Hom_{\Db (\kx)} (F, \km_s^a E)$. Consider the exact sequence
$$0 \to E_{\rm tor} \to E \to E_{\rm f} \to 0,$$
with $E_{\rm tor} \in \coh (\kx)_0$ and $E_{\rm f}$ flat over $R$.
Consider the induced map $f : F \to \km_s^a E_{\rm f}$. This is
non-zero, since $f$ is not a torsion element. It is sufficient to
show that there exists an integer $b > a$ such that $t^n f \notin
\Hom_{\Db (\kx)} (F, \km_s^b E_{\rm f})$, for all $n \in \NN$.
Thus, if $G:={\rm Im}(f)$, it is enough to show that
$G_K\subset(\km_s^aE_{\rm f})_K$ is not contained in
$\bigcap_k(\km_s^kE_{\rm f})_K$.

Suppose to the contrary that $G_K\subset \bigcap (\km_s^kE_{\rm
f})_K$. We will show that then $G\subset\bigcap\km_s^kE_{\rm f}$,
which by the Krull Intersection Theorem would show $G=0$. Indeed,
if $G\subset\km_s^kE_{\rm f}$, then also $G\subset
\km_s^{k+1}E_{\rm f}$, as the induced map
$G\to(\km_s^k/\km_s^{k+1})E_{\rm f}$ becomes the trivial map in
$\coh(\kx_K)$, but $E_{\rm f}$ is $R$-flat and $t\notin\km_s$.
\end{proof}

This applies to the case of $X$ a smooth complex projective
surface with trivial canonical bundle and proves the last part of
Theorem \ref{thm:main}.

\begin{remark}
It should be possible to deduce from Proposition
\ref{prop:derallsame} that for smooth formal surfaces with trivial
canonical bundle in fact $\Db(\coh(\kx))\congpf \Db_{\rm
coh}(\Mod{\kx})\cong\Db(\kx)$ is an equivalence, but we shall not
use this.
\end{remark}



\appendix

\section{Verdier quotients and Serre subcategories}\label{sec:Appendix}

This appendix collects known facts and definitions concerning quotients by Serre subcategories and Verdier quotients which were used throughout this paper. The main source we follow in the exposition is \cite{Neeman}. The reader is encouraged to look at Chapter $2$ and Appendix A of \cite{Neeman} for a complete and readable account. Notice that we forget all set-theoretical issues which, in the case considered in the paper, do not give rise to problems (see \cite[Sect.\ 2.2]{Neeman}).

\medskip

\subsection{Verdier quotients}\label{subsec:verdier}

Let $\cat{T}$ be a triangulated category with shift functor $\Sigma:\cat{T}\to\cat{T}$ (see \cite[Ch.\ 1]{Neeman}). A full additive subcategory $\cat{D}$ is a \emph{triangulated subcategory} if every object isomorphic to an object in $\cat{D}$ is in $\cat{D}$ and the inclusion functor $i:\cat{D}\hookrightarrow\cat{T}$ is a triangulated functor with the additional requirement that, for any $D\in\cat{D}$ the isomorphism $\phi_D:i(\Sigma(D))\to\Sigma(i(D))$ is the identity.

\begin{definition}\label{def:thick}
	A subcategory $\cat{D}$ of $\cat{T}$ is \emph{thick} if it is triangulated and contains all direct summands of its objects.
\end{definition}

If $\cat{D}$ is a triangulated subcategory of $\cat{T}$ one can form the \emph{Verdier quotient} $\cat{T}/\cat{D}$ which is a triangulated category whose objects are the same as those of $\cat{T}$. To define the morphisms in $\cat{T}/\cat{D}$ first consider the collection $\cat{Mor}_{\cat{D}}$ of morphism $f:T_1\to T_2$ in $\cat{T}$ sitting in an exact triangle
\[
\xymatrix{T_1\ar[r]^{f}& T_2\ar[r]& Z\ar[r]&\Sigma(T_1)}
\]
with $Z\in\cat{D}$. A morphism in $\cat{T}/\cat{D}$ between $T_1$ and $T_2$ is an equivalence class of diagrams $$(\xymatrix{T_1&
\ar[l]_-{f} T_0\ar[r]^-{g}& T_2})$$ with $f\in\cat{Mor}_{\cat{D}}$. We say that $(\xymatrix{T_1&
\ar[l]_-{f_1} T_0\ar[r]^-{g_1}& T_2})$ and $(\xymatrix{T_1&
\ar[l]_-{f_2} S_0\ar[r]^-{g_2}& T_2})$ are equivalent if there is a third diagram  $(\xymatrix{T_1&
\ar[l]_-{f_3} Z_0\ar[r]^-{g_3}& T_2})$ and morphisms $u:Z_0\to T_0$ and $v:Z_0\to S_0$ in $\cat{Mor}_{\cat{D}}$ making the following diagram commutative
\[
\xymatrix{& T_0 \ar[dl]_{f_1} \ar[dr]^{g_1} &\\
                 T_1 &Z_0\ar[l]_{f_3}\ar[r]^{g_3}\ar[u]^{u}\ar[d]^{v}& T_2.\\
                 &  S_0\ar[ul]^{f_2}\ar[ur]_{g_2} &
                }
\]
Roughly speaking, all morphisms in $\cat{Mor}_{\cat{D}}$ become invertible.

Let $Q:\cat{T}\to\cat{T}/\cat{D}$ be the natural triangulated functor which is called the \emph{Verdier localization}. The kernel of $Q$ (i.e.\ the full additive subcategory of $\cat{T}$ consisting of objects mapped to zero by $Q$) is thick (see \cite[Rmk.\ 2.1.7]{Neeman}). Hence, if $\cat{D}$ is thick, the kernel of $Q$ coincides with $\cat{D}$.

\subsection{Serre subcategories}\label{subsec:ASerre}

Let $\cat{A}$ be an abelian category and let $\cat{B}\subseteq\cat{A}$ be a full abelian subcategory. We say that $\cat{B}$ is \emph{thick} if for $B_1,B_2\in\cat{B}$ and any short exact sequence
\[
0\to B_1\to A\to B_2\to 0
\]
in $\cat{A}$, then $A$ belongs to $\cat{B}$ as well.

\begin{definition}\label{def:serre}
	A thick full subcategory $\cat{B}$ is a \emph{Serre subcategory} of $\cat{A}$ if
	
	i) Every object of $\cat{A}$ isomorphic to an object of $\cat{B}$ is in $\cat{B}$;
	
	ii) Every quotient or subobject in $\cat{A}$ of an object in $\cat{B}$ is in $\cat{B}$.
\end{definition}

Given a Serre subcategory $\cat{B}$ of an abelian category $\cat{A}$ one can construct the quotient $\cat{A}/\cat{B}$ where the objects of $\cat{A}/\cat{B}$ are the same as those of $\cat{A}$. On the other hand, a morphism $A_1\to A_2$ in $\cat{A}/\cat{B}$ is an equivalence class of diagrams $$(\xymatrix{A_1&
\ar[l]_-{s} A_0\ar[r]^-{t}& A_2})$$ with $\ker(s),\COKE(s)\in\cat{B}$. The equivalence relation we mentioned has a definition which is analogue to the one explained in Section \ref{subsec:verdier} (see \cite[Sect.\ A.2]{Neeman}).

A key fact is the following (see \cite[Lemma A.2.3]{Neeman}):

\begin{lem}\label{lem:serre}
	The category $\cat{A}/\cat{B}$ is abelian. The natural functor $Q:\cat{A}\to\cat{A}/\cat{B}$ is exact and takes object of $\cat{B}$ to objects in $\cat{A}/\cat{B}$ isomorphic to zero. Furthermore, $Q$ is universal with this property. The subcategory $\cat{B}\subseteq\cat{A}$ is the full subcategory of all objects $B\in\cat{A}$ such that $Q(A)$ is isomorphic to zero.
\end{lem}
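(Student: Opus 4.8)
The plan is to identify $\cat{A}/\cat{B}$ with the localization $\cat{A}[S^{-1}]$ of $\cat{A}$ at the class
$$S:=\{\,s\in\mathrm{Mor}(\cat{A})\ :\ \ker(s),\COKE(s)\in\cat{B}\,\}$$
of morphisms whose kernel and cokernel lie in $\cat{B}$, and then to transport the abelian structure of $\cat{A}$ through this localization. First I would check that $S$ is a multiplicative system admitting a calculus of fractions. That $S$ contains all identities is clear, and closure under composition follows from the Serre axioms: if $s,s'\in S$ are composable, then $\ker(s'\circ s)$ and $\COKE(s'\circ s)$ sit in short exact sequences assembled from $\ker(s),\ker(s'),\COKE(s),\COKE(s')$, so they lie in $\cat{B}$ because $\cat{B}$ is closed under subobjects, quotients and extensions. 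The Ore condition is then handled by forming fibre products in $\cat{A}$: the pullback of a member of $S$ is again in $S$, since a pullback preserves kernels and embeds the cokernel into the original one. The cancellation condition holds because the difference of two morphisms equalised by a member of $S$ has image in $\cat{B}$ and is therefore killed upon restriction to a suitable $S$-subobject (the kernel of the difference). Granting this, the Hom-sets of $\cat{A}[S^{-1}]$ are exactly the equivalence classes of roofs $(\xymatrix{A_1&\ar[l]_-{s}A_0\ar[r]^-{t}&A_2})$ with $s\in S$ described in the statement, composition is computed by the fibre-product construction recalled in Section \ref{sect:abgen}, and $Q$ is additive.

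The heart of the argument is to show that $\cat{A}/\cat{B}=\cat{A}[S^{-1}]$ is abelian and that $Q$ is exact. The key reduction is that every morphism $\bar f\colon Q(A_1)\to Q(A_2)$ represented by such a roof equals $Q(t)\circ Q(s)^{-1}$ with $Q(s)$ an isomorphism; hence, up to precomposition with an isomorphism, every morphism of the quotient is $Q(t)$ for an honest $t$ in $\cat{A}$. Kernels and cokernels in the quotient can then be read off from those in $\cat{A}$: I would verify that $Q(\ker t)\to Q(A_1)$ is a kernel of $Q(t)$ and that $Q(A_2)\to Q(\COKE t)$ is a cokernel, the universal properties being checked by repeated use of the calculus of fractions. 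It follows that $Q$ preserves kernels and cokernels, hence is exact; since $Q$ is the identity on objects it is essentially surjective, so every object and morphism of the quotient is obtained in this way. The last abelian axiom, that the canonical morphism from the coimage to the image is an isomorphism, follows because it already is one in $\cat{A}$ and $Q$ carries it to the corresponding map $Q(\mathrm{coim}(t))\to Q(\im(t))$.

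Finally I would record the statements about $\cat{B}$ and $Q$. For $B\in\cat{B}$ the map $0\to B$ has kernel $0$ and cokernel $B$, so it lies in $S$ and becomes an isomorphism under $Q$, giving $Q(B)\cong 0$. Conversely, if $Q(A)\cong 0$ then $Q(\id_A)=0$, and the cancellation half of the calculus of fractions yields an $s\in S$ with $s=0$, whence $\COKE(s)=A\in\cat{B}$; thus $\cat{B}$ is precisely the full subcategory of objects annihilated by $Q$. Universality is then immediate from the universal property of the localization at $S$: any exact functor $F\colon\cat{A}\to\cat{C}$ with $F(\cat{B})=0$ sends each $s\in S$ to a morphism with vanishing kernel and cokernel, i.e.\ to an isomorphism, and therefore factors uniquely through $Q$ by an exact functor $\bar F\colon\cat{A}/\cat{B}\to\cat{C}$.

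I expect the main obstacle to be the abelianness of the quotient, and within it the verification that $Q(\ker t)$ and $Q(\COKE t)$ genuinely represent kernels and cokernels in $\cat{A}[S^{-1}]$, since this is the point at which fraction-morphisms must be reduced to honest morphisms and the Ore and cancellation conditions invoked repeatedly, whereas the multiplicativity of $S$ and the behaviour of $Q$ on $\cat{B}$ are comparatively formal. An alternative, less hands-on route is to appeal to the general theory of \cite[App.\ A]{Neeman} directly; the argument above merely indicates why that theory applies to the Serre subcategory $\cat{B}\subseteq\cat{A}$.
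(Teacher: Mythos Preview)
The paper does not supply its own proof of this lemma: it is stated as a ``key fact'' and simply attributed to \cite[Lemma~A.2.3]{Neeman}. Your proposal therefore cannot be compared against a proof in the paper; what you have written is a reasonable outline of the standard argument that one finds in the cited reference, and indeed you close by noting that appealing to \cite[App.~A]{Neeman} is the alternative to the hands-on verification.

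As for the sketch itself, the overall strategy is correct and the identification of the anticipated obstacle is accurate. Two small points if you intend to flesh this out rather than cite Neeman. First, you verify only the left Ore and left cancellation conditions (via fibre products and kernels); for the localization to be well-behaved as an abelian category one also uses the dual right-sided conditions, checked with pushouts and cokernels, so you should mention both. Second, in the verification that $Q(A)\cong 0$ forces $A\in\cat{B}$, the conclusion is right but the logic could be stated more carefully: from $Q(\id_A)=0$ one obtains $s\colon A'\to A$ in $S$ with $\id_A\circ s=0$, hence $s=0$, so $\COKE(s)=A$ lies in $\cat{B}$ because $s\in S$; it is $\COKE(s)$, not the cancellation property per se, that does the work here.
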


\bigskip

{\small\noindent {\bf Acknowledgements.} We thank the referee for many suggestions that improved the exposition of the paper. We gratefully acknowledge
the support of the following institutions: Hausdorff Center for
Mathematics, IHES, Imperial College, Istituto Nazionale di Alta
Matematica, Max--Planck Institute, and SFB/TR 45.}


\end{document}